\providecommand{\norm}[1]{\|#1\|}
\newcommand{\sign}[1]{\text{sign}(#1)}
\newcommand\independent{\protect\mathpalette{\protect\independenT}{\perp}}
\def\independenT#1#2{\mathrel{\rlap{$#1#2$}\mkern2mu{#1#2}}}
\tikzset{
	declare function={
		normcdf(\x,\m,\s)=1/(1 + exp(-0.07056*((\x-\m)/\s)^3 - 1.5976*(\x-\m)/\s));
		zeroone(\x)= (\x<=-2) * (\x*\x + 6*\x + 8)   +
		and(\x>-2, \x<=1) * (2 - \x - \x*\x)     +
		and(\x>1,  \x<=2) * (6 - 8*\x + 2*\x*\x) +
		(\x>2) * (-10 + 6*\x - \x*\x);
	}
}
\DeclarePairedDelimiter{\floor}{\lfloor}{\rfloor}
	\newcommand{\mypm}{\mathbin{\mathpalette\@mypm\relax}}
	\newcommand{\@mypm}[2]{\ooalign{%
			\raisebox{.1\height}{$#1+$}\cr
			\smash{\raisebox{-.6\height}{$#1-$}}\cr}}
	\def\beq{\begin{equation}\begin{aligned}[b]}
	\def\eeq{\end{aligned}\end{equation}}
\def\boxit#1{\vbox{\hrule\hbox{\vrule\kern6pt  \vbox{\kern6pt#1\kern6pt}\kern6pt\vrule}\hrule}}
\begin{document}

\title{Nonregular and Minimax Estimation of Individualized Thresholds in High Dimension with Binary Responses}

\author{Huijie Feng\thanks{Department of Statistics and Data Science, Cornell University, Ithaca, NY 14850, USA; e-mail: \texttt{hf279@cornell.edu}.}~~~~~Yang Ning\thanks{Department of Statistics and Data Science, Cornell University, Ithaca, NY 14850, USA; e-mail: \texttt{yn265@cornell.edu}.}~~~~~Jiwei Zhao\thanks{Department of Biostatistics, State University of New York at Buffalo, Buffalo, NY 14214, USA; e-mail: \texttt{zhaoj@buffalo.edu}.}
}
\date{\today}

\maketitle

\vspace{-0.3in}

\begin{abstract}
Given a large number of covariates $\bZ$, we consider the estimation of a high-dimensional parameter $\btheta$ in an individualized linear threshold $\btheta^T\bZ$ for a continuous variable $X$, which minimizes the disagreement between $\sign{X-\btheta^T\bZ}$ and a binary response $Y$. While the problem can be formulated into the M-estimation framework, minimizing the corresponding empirical risk function is computationally intractable due to discontinuity of the sign function. Moreover, estimating $\btheta$ even in the fixed-dimensional setting is known as a nonregular problem leading to nonstandard asymptotic theory. To tackle the computational and theoretical challenges in the estimation of the high-dimensional parameter $\btheta$, we propose an empirical risk minimization approach based on a regularized smoothed loss function. The Fisher consistency of the proposed method is guaranteed as the bandwidth of the surrogate smoothed loss function is shrunk to 0 and the resulting empirical risk function is generally non-convex. The statistical and computational trade-off of the algorithm is investigated. Statistically, we show that the finite sample error bound for estimating $\btheta$ in $\ell_2$ norm is $(s\log d/n)^{\beta/(2\beta+1)}$, where $d$ is the dimension of $\btheta$, $s$ is the sparsity level, $n$ is the sample size and $\beta$ is the smoothness of the conditional density of $X$ given the response $Y$ and the covariates $\bZ$. The convergence rate is nonstandard and slower than that in the classical Lasso problems. Furthermore, we prove that the resulting estimator is minimax rate optimal up to a logarithmic factor. The Lepski's method is developed to achieve the adaption to the unknown sparsity $s$ and smoothness $\beta$. Computationally, an efficient path-following algorithm is proposed to compute the solution path. We show that this algorithm achieves geometric rate of convergence for computing the whole path. Finally, we evaluate the finite sample performance of the proposed estimator in simulation studies and a real data analysis from the ChAMP (Chondral Lesions And Meniscus Procedures) Trial.


\end{abstract}
\noindent {\bf Keyword:} {\small High-dimensional statistics, Nonstandard asymptotics, Non-convex optimization, Minimax optimality, Adaptivity, Kernel method}

\section{Introduction}



In this paper, we consider the problem of estimating a threshold for a continuous variable $X$ in order to predict a binary response $Y$, which arises in a wide range of applications such as clinical diagnostics, signal processing, personalized medicine and econometrics. To account for the heterogeneity of populations, it is often preferred to construct an individualized threshold based on a large number of covariates $\bZ\in\RR^d$ often easily collected in many modern applications. To maintain interpretablity of the threshold and overcome the curse of dimensionality, practitioners usually assume that the threshold can be well approximated by some prespecified function of $\bZ$ up to some unknown parameters. In this paper, we focus on the estimation of the linear threshold $\btheta^T\bZ$, where $\btheta$ is the unknown parameter of interest.

Formally, let $\{(x_i,y_i,\bz_i)\}^n_{i = 1}$ be $n$ i.i.d. copies of $(X,Y,\bZ)$ that follows an unknown distribution $P = P(X,Y,\bZ)$. Our goal is to estimate the optimal $\btheta \in \RR^d$ that minimizes the disagreement between $Y$ and $\sign{X - \btheta^T\bZ}$. For this purpose, a natural formulation is
\beq\label{MCID_linear}
\btheta^{*} = \argmin_{\btheta} \Big\{w(-1)\PP_P(X\geq \btheta^T\bZ,Y =-1) + w(1)\PP_P(X<  \btheta^T\bZ,Y =1)\Big\},
\eeq
where $w(\cdot)$ is a prespecified weight function and $\btheta^* \in \RR^d$ is a high-dimensional parameter which can be also viewed as a functional of $P$. We add the subscript $P$ in (\ref{MCID_linear}) to indicate the probability measure under the distribution $P$. Hereafter, we will omit the subscript $P$ for simplicity. Throughout the paper, we assume $\btheta^*$ exists and is unique so that the estimation problem is well posed. In the following, we present several motivating examples for the problem (\ref{MCID_linear}).

\begin{example}[Covariate-adjusted Youden index]\label{exp_2}
	Youden's J statistic \citep{youden1950index} is one of the most important tools to evaluate the performance of a dichotomous test based on receiver operating characteristic (ROC) curve. In the context of clinical diagnostics, let $X$ denote a continuous biomarker and $Y$ denote the disease status ($Y=1$ if a patient is diseased and $Y=-1$ if a patient is healthy). To determine the status of a patient, a clinically meaningful diagnostic procedure is based on whether the biomarker is greater than a given threshold $c$, where the value of threshold will largely affect the diagnostic accuracy. Recently, \cite{xu2014model} among others showed that the diagnostic accuracy can be greatly improved by using individualized thresholds based on patient-specific features $\bZ$. From a practical perspective, understanding which and how covariates $\bZ$ affect the diagnosis itself is of great interest. Specifically, let $c_{\btheta}(\bZ) = \btheta^T\bZ$ be the covariate-adjusted threshold under the linearity assumption. In this scenario, the Youden index is defined as
	\[
	  J = \max_{\btheta} \{ \text{sen}(c_{\btheta}(\bZ)) + \text{spe}(c_{\btheta}(\bZ)) -1 \},
	\]
	where $\text{sen}(c_{\btheta}(\bZ))=\PP(X\geq c_{\btheta}(\bZ)|Y =1)$ and $\text{spe}(c_{\btheta}(\bZ)) = \PP(X < c_{\btheta}(\bZ)|Y =-1)$ are the sensitivity and specificity of the test, respectively. Thus, estimating the optimal threshold $c^*(\bz)=\btheta^{*T}\bz$ that maximizes the Youden index is equivalent to estimating $\btheta^*$ defined in (\ref{MCID_linear}) with weight function $w(y) = 1/\PP(Y = y)$.
	
\end{example}

\begin{example}[One-bit compressed sensing]\label{exp_1}
	In classical compressed sensing, the objective is to recover the signal $\btheta^* \in \RR^d$ from $n < d$ linear measurements $\tilde{y}_i = \bz_i^T\btheta^*$, where $\bz_i$ is a measurement vector. However, in practice the measurements are always  discretized prior to further digital processing. Under an extreme quantization scenario (one-bit compressed sensing), the measurements are quantized into a single bit via the sign of $x_i - \bz_i^T\btheta^*$, where $x_i$ is a measurement assumed with a positive sign \citep{boufounos20081}.  Under this formulation, the goal of one-bit compressed sensing is to recover the signal $\btheta^*$ from the measurements
	\[y_i = \sign{x_i - \bz_i^T\btheta^*},\]
	which can be viewed as the ``noiseless" version of the problem  (\ref{MCID_linear}), because in this case $y_i$ is fully determined by $(x_i, \bz_i)$.
\end{example}

\begin{example}[Personalized medicine]
Many clinical researches find that patient response to the same treatment can be highly heterogeneous. Due to this observation, how to design the best treatment for each individual draws tremendous attention in recent years and is known as a fundamental problem in personalized medicine. In statistics community, a variety of statistical methods have been proposed to estimate optimal individualized treatment rules (ITR) based on the information from a large number of clinical variables. In this context, let $R$ denote a continuous outcome variable coded so that a higher value suggests a better condition, $\bW\in\RR^{d+1}$ denote a vector of baseline subject features, and $A\in \{-1, 1\}$ denote the assigned treatment. An ITR, $D(\cdot)$, is a map from $\RR^{d+1}$ into $\{-1, 1\}$ so that a patient presenting with $\bW = \bw$ is recommended to receive treatment $D(\bw)$. \cite{zhao2012estimating} showed that the optimal ITR $D^*$ can be defined as the minimizer of
$$
\EE\Big[\frac{R}{\pi(A|\bW)}I(A\neq D(\bW))\Big],
$$
where $\pi(A|\bW)=\PP(A=1|\bW)$ is known as the propensity score. In practice, simple ITR such as linear rules are usually more desirable for convenient interpretation \citep{qiu2018estimation}. In particular, if some domain knowledge can be used to determine the sign of any baseline variable in $D^*$ (say $W_1$ has a positive sign), the linear ITR can be written as $D(\bW)=\sign{W_1 - \btheta^T \bW_{-1}}$, where $\bW_{-1}$ denotes the baseline variable excluding $W_1$. Thus, estimating the optimal ITR reduces to the problem (\ref{MCID_linear}) with the weight $w(a)=R/\pi(a|\bW)$ for $a\in\{-1,1\}$.

\end{example}

\begin{example}[Linear binary response model and maximum score estimator]\label{exp_4}
	Consider the  model
	\begin{equation}\label{eqbinary}
	Y = \sign{\bW^T\bbeta^* + \epsilon},~\bW,\bbeta^*\in \RR^{d+1},	
	\end{equation}
	where the noise $\epsilon$ is not necessarily independent of $\bW$ but comes with a weaker assumption that $\text{Median}(\epsilon|\bW) = 0$. Write $\bbeta^* = (\beta^*_0,\dotso,\beta^*_d)^T$. In this case, $\bbeta^*$ is only identifiable up to an arbitrary scale factor, and one way to remove the ambiguity is by setting $|\beta^*_0| = 1$. Suppose $\beta_0 = 1$, and let $W = (X,-\bZ^T)^T$ and $\bbeta^* = (1,\btheta^{*T})^T$. Motivated by the fact that
	\begin{equation}\label{eqbinary2}
	\btheta^{*} = \argmin_{\btheta}\Big\{\PP(X\geq \btheta^T\bZ, Y =-1) + \PP(X<  \btheta^T\bZ, Y =1)\Big\},
	\end{equation}
	\cite{manski1975maximum,manski1985semiparametric} proposed the maximum score estimator for $\bbeta^*$ by maximizing an empirical score function.  The equation (\ref{eqbinary2}) can be treated as an unweighted case of (\ref{MCID_linear}).
\end{example}

Motivated by the above examples, in this paper we consider the problem of estimating $\btheta^*$ defined in (\ref{MCID_linear}) under a high-dimensional ``model-free'' regime, where the dimension $d$ is allowed to be much larger than $n$, and we do not impose any parametric modeling assumption on the joint distribution $P$ of $Y$, $X$ and $\bZ$. For instance, we do not assume the linear binary response model (\ref{eqbinary}) in Example \ref{exp_4} holds. Despite its practical importance and generality, estimating $\btheta^*$ has several difficulties from both computational and statistical perspectives.
Computationally,
a natural idea to estimate $\btheta^*$ is via empirical risk minimization. However, the empirical counterpart of (\ref{MCID_linear}) is computationally intractable. To see this, we can rewrite (\ref{MCID_linear}) as
\beq\label{MCID_linear2}
\btheta^{*} = \argmin_{\btheta} R(\btheta),~~\textrm{where}~~R(\btheta)=\EE \bigg[w(Y)L_{01}\bigg( Y(X - \btheta^T\bZ)  \bigg)  \bigg],
\eeq
and $L_{01}(u)=\frac12(1-\sign{u})$ is the 0-1 loss. The corresponding emipirical risk function is generally NP-hard to minimize.
Statistically,  under the ``model-free'' regime, $\btheta^*$ is defined implicitly as a functional of the underlying joint distribution $P$, i.e., $\chi(P)$. Although this formulation enjoys more generality and potentially allows for misspecification, as a price to pay, the classical plug-in estimator say $\chi(\hat P)$ in the semiparametric literature \citep{bickel1993efficient} is infeasible, because the initial estimator $\hat P$ of $P$ may have slow rate of convergence and is even inconsistent in high-dimensional setting. Moreover, as shown in \cite{kim1990cube}, the estimator that minimizes the empirical counterpart of (\ref{MCID_linear2}) (assuming it is computable) has a nonstandard rate of convergence even in fixed dimension. Given the above reasons, constructing a computationally efficient estimator of $\btheta^*$ and analyzing its statistical properties in high dimension is a challenging problem.


We wish to highlight that the problem is closely related to but very different from the standard classification task, in which the ultimate goal is to accurately predict $Y$ based on $X$ and $\bZ$. In fact, most existing classification methods would fail to recover $\btheta^*$. To explain this,
under the standard classification setting, many popular methods seek to minimize the empirical loss w.r.t a convex upper bound of the 0-1 loss, in order to circumvent the computational problem.
For instance, Adaboost minimizes the exponential loss $(\exp(-x))$, and Support Vector Machine (SVM) minimizes the hinge loss $(\max\{0,1-x\})$ or its variants.
It is well known that even with such surrogate loss functions, the minimizer of the population risk function agrees with the Bayes rule, implying Fisher consistency in classification. However, if the Bayes rule is nonlinear and we are only interested in the best linear classifier, the estimand of SVM or Adaboost does not generally coincide with $\btheta^*$. Consequently, these methods are not generally Fisher consistent for the purpose of estimating $\btheta^*$. The following toy example demonstrates such inconsistency.

{\bf A toy example.}
	Suppose $X\sim N(0,1)$, $Z$ is a random variable taking values in $\{0.5,5\}$ with equal probability and $Y = \sign{X- Z}$. It can be shown that under this simple noiseless setting, the parameter of interest $\btheta^*$ in (\ref{MCID_linear}) equals to $1$ for any arbitrary nonnegative weights, while the minimizers of the population risk w.r.t hinge loss and exponential loss do not coincide with $\btheta^*$. Specifically, direct calculation gives that $\nabla R_{\text{hinge}}(\btheta)|_{\btheta = 1}\approx -0.035 < 0$ and $\nabla R_{\text{exp}}(\btheta) |_{\btheta = 1}\approx -0.059 <0$, where the risk functions are as follows
	\beq
	&R_{\text{hinge}}(\btheta) = \EE[\max \{1 - Y(X - \btheta Z), 0\}],\\
	&R_{\text{exp}}(\btheta) = \EE[\exp(- Y(X - \btheta Z)].
	\eeq
	By the first order optimality condition, $\btheta=1$ is not a minimizer of $R_{\text{hinge}}(\btheta)$ and $R_{\text{exp}}(\btheta)$.
	Figure \ref{demo} depicts the shape of the population risk functions w.r.t. 0-1 loss (also (\ref{MCID_linear2}) with weight function $w(y) \equiv 1$), hinge loss and exponential loss. This confirms that minimizing the empirical risk w.r.t. hinge loss or exponential loss will not consistently estimate $\btheta^*$ in this example.
	\begin{figure}
	    \centering
	    \includegraphics[width=80mm,height = 60mm]{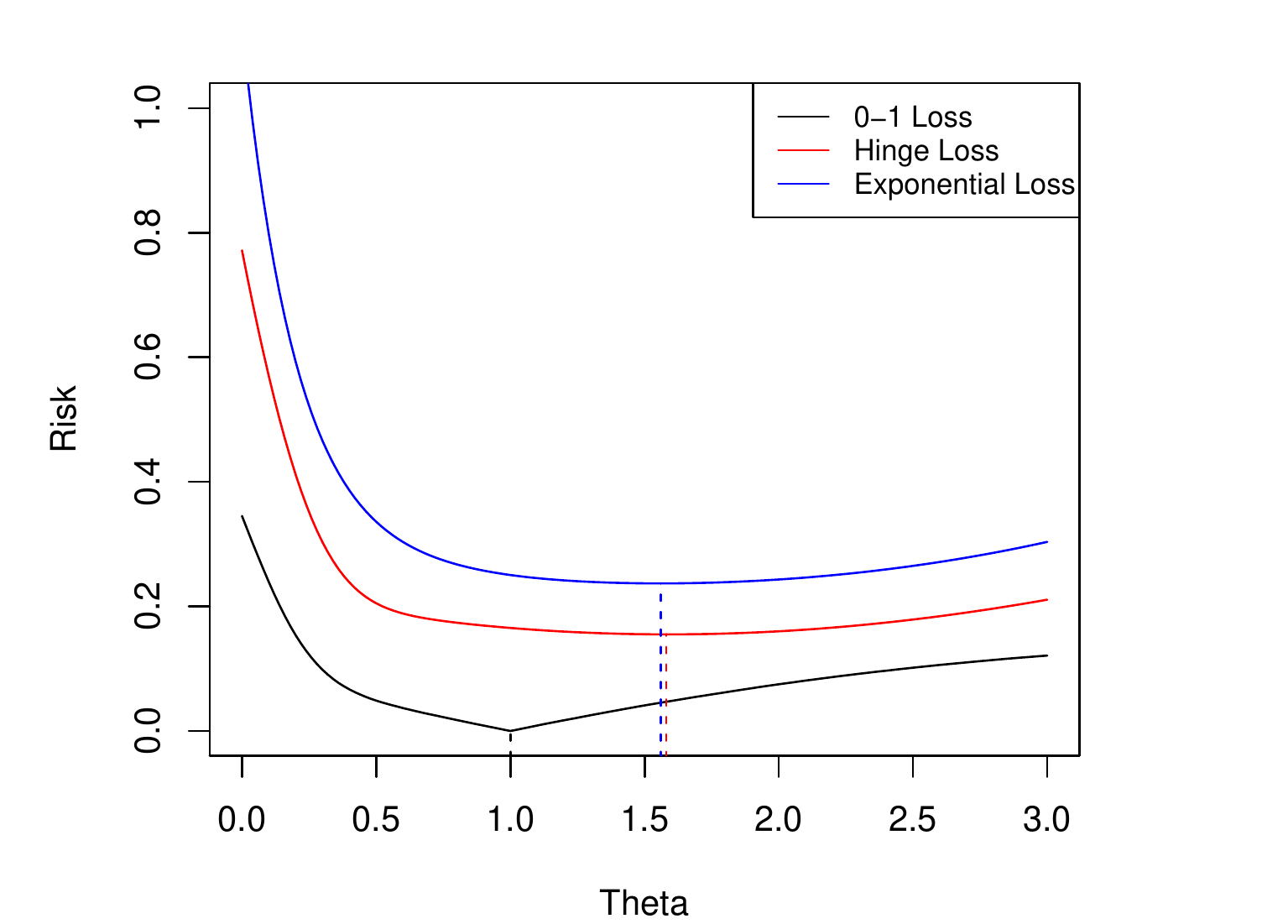}
	    \caption{Population risk functions w.r.t. 0-1 loss, hinge loss and exponential loss. The dashed lines depict the location of the minimizers.
		}   \label{demo}
	\end{figure}

\subsection{Main contributions}
The first contribution of this work is to propose a penalized M-estimation framework for estimating $\btheta^*$ in (\ref{MCID_linear}) or equivalently (\ref{MCID_linear2}). The estimator is defined as the minimizer of a $\ell_1$ penalized empirical risk function corresponding to a smoothed surrogate loss $L_{\delta,K}(\cdot)$, where $K$ is a kernel function and $\delta$ is the associated bandwidth parameter. By shrinking the bandwidth $\delta$ towards $0$, the loss function $L_{\delta,K}(\cdot)$ converges (pointwise) to the original 0-1 loss, and thus achieves the (asymptotic) Fisher consistency for estimating $\btheta^*$. In addition, $L_{\delta,K}(\cdot)$ is differentiable and its gradient has a simple closed form, so that we can leverage the smoothness of the surrogate loss to design  gradient-based algorithms for computation. When the dimension $d$ is fixed and much smaller than $n$, some earlier works proposed to use the smoothed loss function for inference purpose. For instance, \cite{horowitz1992smoothed} studied the inferential properties of the smoothed maximum score estimator under the linear binary response model. More recently, \cite{qiu2018estimation} studied the estimation and inference on the optimal linear ITR based on the ramp loss. To the best of our knowledge, such penalized smoothed loss function based methods have not been used in any related high-dimensional problems. Moreover, both computational and statistical guarantees in this setting are largely unknown. The goal of this work is to bridge this gap. In the following, we will explain our contributions in computational and statistical theory.


The second contribution is to propose a path-following algorithm for computing the solution path and analyze the rate of convergence of the algorithm. While the surrogate loss $L_{\delta,K}(\cdot)$ is smooth, it is generally nonconvex and may have multiple local solutions. To overcome the issues arising from nonconvexity, we leverage the homotopy path-following framework \citep{efron2004least,park2007l1,xiao2013proximal,wang2014optimal} and seek for an approximate local solution of the objective function. This algorithm solves a sequence of minimization problems with decreasing regularization parameters until the one that achieves desired statistical performance. Within each stage that corresponds to a fixed regularization parameter, we utilize a warm start from the previous stage which is within the fast local convergence region for the current stage, and apply the proximal-gradient method \citep{nesterov2013gradient} to construct a sequence of sparse approximations approaching the exact local solution of this stage. We show that the algorithm achieves a global geometric rate of convergence, namely, to compute the whole regularization path and approximate an exact local solution up to $\epsilon_{tgt}$ optimization precision, it takes no more than $O(\log(1/\epsilon_{tgt}))$ number of total proximal-gradient iterations. In addition, we find an interesting phenomenon on the computational and statistical trade-off. While the proposed estimator has slower statistical rate than the classical Lasso estimator in linear regression (as explained later), our estimator requires less computational cost in terms of the total number of path-following stages. Unlike the previous analysis of path-following algorithms, our theory requires a more refined analysis which separately controls the approximation bias due to the use of the smoothed surrogate loss and the variance term, from which the effects on the statistical rate of the estimator are further balanced.



The third contribution is to establish nonasymptotic statistical guarantees for the  approximate solution of the path-following algorithm as well as the exact local solution. We show that if the conditional density of $X$ given $Y, \bZ$ satisfies some  H\"older class type condition with smoothness parameter $\beta$ and the true parameter  $\btheta^*$ is $s$-sparse, then the $\ell_2$ estimation error of the proposed estimator achieves the rate (in Theorem \ref{theorem_main2})
$$\big(s\log d/n\big)^{\beta/(2\beta + 1)}.$$
The convergence rate is nonstandard and slower than the rate $(s\log d/n)^{1/2}$ appearing in most of the high-dimensional estimation problems, such as linear regression with Lasso or nonconvex penalty \citep{buhlmann2011statistics}. This result also implies that the estimator has faster rate if the conditional density of $X$ given $Y, \bZ$ is more smooth. Furthermore, in Theorem \ref{theorem_minimax}, we prove that the proposed estimator is minimax rate optimal up to a logarithmic factor. In order to attain the optimal rate, the bandwidth parameter $\delta$ in the smoothed loss function and the regularization parameter $\lambda$ need to satisfy  $\delta \asymp \big(s(\log d)/n\big)^{1/(2\beta + 1)}$ and $\lambda \asymp (\frac{\log d}{n\delta})^{1/2}$, where both depend on unknown  smoothness $\beta$ and sparsity $s$. Finally, we develop adaptive estimation procedures for $\btheta^*$ if $\beta$ or $s$ is unknown by applying Lepski's method \citep{lepskii1991problem,lepskii1993asymptotically}.



\subsection{Related works}
To the best of our knowledge, the parameter estimation problem under a classification loss such as  (\ref{MCID_linear2}) is largely unexplored when the number of covariates diverges to infinity. In a recent work, \cite{zhang2016variable} investigated the estimation and variable selection properties of a high-dimensional linear classifier in a nonconvex penalized SVM framework. Different from our estimand $\btheta^*$ defined in (\ref{MCID_linear2}), they defined the target parameter of interest as the minimizer of the population hinge loss. As a result, our methodological development and theoretical analysis is different from theirs and is indeed much more challenging. For instance, to handle the discontinuity of the 0-1 loss, we propose the approximation via the smoothed surrogate loss with a shrinking bandwidth. Since the hinge loss function itself is continuous and convex, they can directly estimate the parameter via penalized empirical risk minimization. From the theoretical perspective, the proposed estimator has a nonstandard statistical rate which is not improvable by the minimax lower bound, whereas their problem leads to a standard estimator with the $\ell_2$ rate $(s/n)^{1/2}$ up to some logarithmic factor.

We note that during the preparation of this manuscript, we became aware of a recent work of \cite{ban2019}, who studied the maximum score estimator under growing dimension. Our work differs from theirs in many aspects. First, they proposed to minimize an empirical risk function based on the 0-1 loss subject to the $\ell_0$ constraint, say $\|\btheta\|_0\leq s$ assuming $s$ is known. For practical implementation, they proposed to first apply penalized Adaboost to perform variable selection and then solve the smoothed maximum score on the selected covariates.
Instead, we propose an $\ell_1$ penalized estimation framework corresponding to a smoothed surrogate loss via a path-following type algorithm. Both computational and statistical guarantees of the algorithms are established. Second, from a technical perspective, their work was mainly based on a soft margin condition commonly used in classification problems \citep{mammen1999smooth} and the rate of convergence of their estimator depends on the parameter in the margin condition, whereas our analysis relies on the H\"older class smoothness assumption and naturally the smoothness parameter $\beta$ affects the convergence rate. Thus, the model space considered in this paper is intrinsically different from their work, and the theoretical results on the rate of convergence and minimax lower bound also differ naturally from theirs. We refer to Section \ref{theory} for a more detailed comparison.




\subsection{Organization}
The rest of this paper is organized as follows. In section \ref{methodology} we introduce the smoothed surrogate loss function, the penalized estimator as well as the path-following algorithm. In section \ref{theory}, we analyze the theoretical properties of the path-following algorithm along with the upper bound for the estimation error of $\btheta^*$, followed by the minimax lower bound.
Section \ref{adaptive} introduces the procedures for adaptive estimation. The proofs of the main results are shown in section \ref{proofs}. Numerical experiments and a real data application are in section \ref{simulation} and \ref{real_data}, respectively.

\subsection{Notation}
For $\bv = (v_1,\dotso,v_d)^T \in \RR^d$, we use $\bv_S$ to denote the subvector of $\bv$ with entries indexed by the set $S$. We write $\mathds{1}()$ as the indicator function. For $q = [1,\infty)$, $\norm{\bv}_q = \big(
\sum^d_{i = 1}|v_i|^q
\big)^{1/q}$ and $\norm{\bv}_0 = \sum^d_{i = 1}\mathds{1}(v_i \neq 0)$. For any $a,b \in \RR$, we write $a \lor b = \max \{a,b\}$. For any positive sequences $\{a_1,a_2,\dotso \}$ and $\{b_1,b_2, \dotso \}$, we write $a_n \lesssim b_n$ or $a_n = O(b_n)$ if there exists a constant $c$ such that $a_n \leq cb_n~\forall~n$, and $a_n \asymp b_n$ if $a_n \lesssim b_n$ and $b_n \lesssim a_n$.

\section{Methodology}\label{methodology}
\subsection{Smoothed surrogate loss and penalized estimator}\label{method_loss}
Recall that our goal is to estimate $\btheta^*$ in (\ref{MCID_linear}) or equivalently (\ref{MCID_linear2}). 
To tackle the discontinuity of the 0-1 loss function, we consider a family of smoothed surrogate loss functions
\beq\label{smooth_loss}
L_{\delta,K}(u) =  \int^{\infty}_{u/\delta}K(t)dt,
\eeq
where $K(t)$ is a symmetric kernel function satisfying $\int K(t)dt = 1$ and $\delta > 0$ is a bandwidth parameter.
Given the surrogate loss, we define the surrogate risk function $R_{\delta,K}$ as
 \beq\label{eq_Rdelta}
 R_{\delta,K}(\btheta) = \EE \bigg[
 w(Y)L_{\delta,K}(Y(X - \btheta^T\bZ))
 \bigg],
 \eeq
 and the corresponding empirical risk as
 \beq
 R_{\delta,K}^n(\btheta) = \frac 1n\sum_{i = 1}^{n}w(y_i)L_{\delta,K} \bigg(y_i(x_i - \btheta^T\bz_i)\bigg).
 \eeq
There are two main reasons to introduce the smoothed surrogate loss $L_{\delta,K}(u)$ and the associated population and empirical risk functions. First, from the optimization perspective, unlike the empirical version of (\ref{MCID_linear2}), the smoothed empirical risk function $R_{\delta,K}^n(\btheta)$ is differentiable so that we can leverage the smoothness of $R_{\delta,K}^n(\btheta)$ to design gradient-based algorithms for computation. The proposed procedure can be interpreted as a solution when we do not have direct access to the gradient of an objective function, in order to apply first order methods.

Second, from the statistical perspective, the smoothed surrogate loss $L_{\delta,K}(u)$ can be viewed as an approximation of the original 0-1 loss, i.e., $L_{\delta,K}(u)$ converges to $\mathds{1}(u<0)$ for any fixed $u\neq 0$ as the bandwidth parameter $\delta \searrow 0$. To better understand the role of the kernel function $K(\cdot)$,
we observe that
\beq\label{eq_grad_original}
\nabla R(\btheta) = \sum_{y = \pm 1}w(y)\int_{\bZ} \bz y f(\btheta^T\bz|\bz,y) f(\bz,y) d\bz,
\eeq
where $R(\btheta)$ is defined in (\ref{MCID_linear2}), $f(x|\bz,y)$ is the conditional density of $X$ given $\bZ, Y$ and $f(\bz,y)$ is the joint density of $\bZ$, $Y$. Similarly,
\beq \label{eq_grad_smooth}
\nabla R_{\delta,K}(\btheta) &= \sum_{y = \pm 1}w(y) \int_{\bZ} \bz y \Big[\int_X\frac 1\delta   K\bigg(\frac{y(x - \btheta^T\bz)}{\delta}\bigg) f(x|\bz,y)dx \Big]f(\bz,y) d\bz.
\eeq
Thus, at the population level, the conditional density function $f(\btheta^T\bz|\bz,y)$ in (\ref{eq_grad_original}) is substituted by its kernel approximation
$$
\int_X\frac 1\delta   K(\frac{y(x - \btheta^T\bz)}{\delta}) f(x|\bz,y)dx=\int_X\frac 1\delta   K(\frac{x - \btheta^T\bz}{\delta}) f(x|\bz,y)dx,
$$
for some symmetric kernel, when applying the smoothed surrogate loss. As $\delta \searrow 0$, the gradient of the smoothed risk function approximates the gradient of the risk function based on the original 0-1 loss. A formal theoretical analysis in section \ref{sec_theory} implies that the estimation of $\btheta^*$ based on the smoothed surrogate loss is asymptotically Fisher consistent.


\begin{remark}
We note that in machine learning literature many examples in the class of surrogate loss  in (\ref{smooth_loss}) have been studied. For instance, when $K(t) =\frac12\mathds{1}(|x| \leq 1)$ is the  rectangular kernel, (\ref{smooth_loss}) becomes the ramp loss or its variants, such as $\psi$-loss \citep{shen2003psi} or truncated hinge loss \citep{wu2007robust}.  However, most of these works focus on the performance of their classifiers in terms of excess risk, rather than the estimation of $\btheta^*$. A more critical difference is that we incorporate the bandwidth parameter in the kernel function and shrink it towards 0 for the (asymptotic) Fisher consistency in estimation.
\end{remark}

When the parameter $\btheta^*$ is of high dimension (i.e., $d\gg n$), a common practice in statistics is to assume $\btheta^*$ is $s$-sparse, namely $\norm{\btheta^*}_0 \leq s$ for some $s < n$. To estimate $\btheta^*$, we propose to minimize the following $\ell_1$ penalized smoothed empirical risk function,
\beq \label{eq_penalized_obj}
\widehat{\btheta}_{{\delta,K,\lambda}} = \argmin_{\btheta} \big\{R^n_{\delta,K}(\btheta)  + \lambda \norm{\btheta}_1\big\},
\eeq
where $\lambda > 0$ is a regularization parameter.
To ease notation, in the rest of the paper we omit the dependence of the risk function on $K$ and the dependence of the estimator on $K$ and $\delta$ by writing $R^n_{\delta,K}(\btheta) = R^n_{\delta}(\btheta) $ and $\widehat{\btheta}_{{\delta,K,\lambda}} = \widehat{\btheta}_{\lambda}$
unless confusion may occur.



\subsection{Path-following algorithm}\label{method_algorithm}
Although the smoothed surrogate loss function remedies the discontinuity of the 0-1 loss, it is generally nonconvex, and thus computing the global solution to (\ref{eq_penalized_obj}) is still intractable.  In addition, even if the goal is to compute some local solution(s) satisfying the first order optimality condition, it is of both practical and theoretical interest to account for the optimization and statistical accuracy in order to design the algorithm and analyze its rate of convergence. To address these issues, we seek to utilize the homotopy path-following framework \citep{efron2004least,park2007l1,xiao2013proximal,wang2014optimal} to compute  approximate local solutions to (\ref{eq_penalized_obj}) corresponding to a sequence of decreasing regularization parameters $\lambda$, until the target regularization parameter is reached.

To be specific, we firstly choose a sequence of $\lambda_0 > \lambda_1 > \dotso >\lambda_{N} = \lambda_{tgt}$, where
\[
\lambda_{t} = \phi^t\lambda_0, t = 0,1,\dotso
\]
for some constant $\phi\in(0,1)$ and $\lambda_{tgt}$ is the target regularization parameter to be specified later. In practice,  we can initialize $\lambda_0 = \norm{\nabla R_\delta^n(\mathbf{0})}_\infty$. By checking the optimality condition, it can be shown that $\tilde{\btheta}_0 = \mathbf{0}$ is an exact local solution to (\ref{eq_penalized_obj}) and will be the initial value in the algorithm. We note that it is possible to choose alternative initial values for $\lambda_0$ provided $\lambda_0$ is large enough such that the solution to (\ref{eq_penalized_obj}) is sparse. Here, $N$ denotes the total number of the path-following stages and we set $N = \frac{\log(\lambda_{tgt}/\lambda_0)}{\log \phi}$. Without loss of generality, we assume $N$ is an integer. At each stage $t=1,...,N$, the goal is to approximately compute the exact local solution $\widehat{\btheta}_t$ corresponding to $\lambda_t$,
\[
\widehat{\btheta}_{t} = \argmin_{\btheta} R^n_{\delta}(\btheta)  + \lambda_t \norm{\btheta}_1.
\]
To this end, we apply the proximal-gradient method \citep{nesterov2013gradient} to iteratively approximate $\widehat{\btheta}_t$ by minimizing a sequence of quadratic approximations of $R^n_{\delta}(\btheta)$ over a convex constraint set $\Omega$:
\beq
\btheta^{k+1}_t &=\argmin_{\btheta \in \Omega}\bigg \{
R^n_\delta(\btheta^k_t ) + \langle \nabla R^n_\delta(\btheta^k_t ),(\btheta - \btheta^k_t) \rangle + \frac{1}{2\eta}\norm{\btheta - \btheta^k_t}_2^2 + \lambda_t \norm{\btheta}_1
\bigg \}
\\&= \argmin_{\btheta\in \Omega}\bigg\{ \frac1 {2\eta}
\bignorm{\btheta - \btheta^k_t + \eta \nabla R^n_\delta(\btheta^k_t ) }_2^2
+ \lambda_t \norm{\btheta}_1
\bigg  \}\\
&:=\cS_{\lambda_t \eta}\big(\btheta^k,\Omega \big),
\eeq
where $\eta$ is the step size to be specified later. Since $R^n_{\delta}(\btheta)$ is nonconvex, there may exist unwanted local solutions to (\ref{eq_penalized_obj}). To further regularize the sequence of estimators, we force $\btheta^{k+1}_t$ to stay in the set $\Omega$ in every iteration, on which we assume the empirical risk function $R^n_{\delta}(\btheta)$ is well behaved, namely, the restricted strong convexity and restricted smoothness (see Assumption \ref{RSC}) hold over $\Omega$. 
The proximal-gradient algorithm is described in Algorithm \ref{proxi_algo}. In the algorithm, we use the stopping criteria $w_\lambda(\btheta)$ defined as
\beq\label{equ_subopti}
\omega_\lambda(\btheta) = \min_{\bxi \in \partial \norm{\btheta}_1}\max_{\btheta' \in \Omega} \bigg \{ \frac{(\btheta - \btheta')^T}{\norm{\btheta - \btheta'}_1}(\nabla R_\delta^n(\btheta) + \lambda \bxi) \bigg \}.\eeq
To interpret this, notice that when $\btheta$ lies in the interior of $\Omega$, $\omega_\lambda(\btheta)$ reduces to
\[
\omega_\lambda(\btheta) = \min_{\bxi \in \partial \norm{\btheta}_1}\norm{\nabla R_\delta^n(\btheta) + \lambda \bxi}_\infty.
\]
The first-order optimality condition for (\ref{eq_penalized_obj}) implies that if $\hat{\btheta}$ is an exact solution then there exists
$\bxi \in \partial \norm{\hat{\btheta}}_1$ such that $\nabla R_\delta^n(\hat{\btheta}) + \lambda \bxi = 0$.
Therefore, $\omega_\lambda(\btheta)$ can serve as a measure of sub-optimality for the approximate solution.

At stage $t$, the proximal-gradient algorithm returns an approximate solution  $\tilde\btheta_t$ with precision $\epsilon_t =  \nu \lambda_t, \nu \in (0,1)$ corresponding to $\lambda_t$ (see Algorithm \ref{pf_algo} for the definition of $\tilde\btheta_t$). This ensures that $\tilde{\btheta}_t$ achieves an adequate precision, such that it belongs to a fast local convergence region for the next stage corresponding to a smaller regularization parameter $\lambda_{t+1}$. Then we use $\tilde{\btheta}_t$ as a warm start for stage $t+1$ and repeat this process. At the final stage $N$, we would compute the approximate solution $\tilde{\btheta}_N = \tilde{\btheta}_{tgt}$ corresponding to $\lambda_{tgt}$ using a high precision $\epsilon_{tgt}$. The detail of the path-following algorithm is described in Algorithm \ref{pf_algo}.

By using this path-following algorithm, one can efficiently compute the entire solution path which is often desired in many real applications (see the data analysis in section \ref{real_data}). In addition, the path-following algorithm may offer extra computational convenience when the estimators need to be computed at many values of the tuning parameter such as in cross-validation (see section \ref{simulation}).

\begin{algorithm}[ht]
	\SetKwInOut{Input}{input}
	\SetKwInOut{Output}{output}
	\SetKwInOut{Par}{parameter}
	\SetKwInOut{Init}{initialize}
	\Input{$\lambda_{tgt} > 0, \nu > 0, \phi > 0, \epsilon_{tgt} > 0,\Omega$}
	\Par{$\eta > 0$}
	\Init{$\tilde{\btheta}_0 \leftarrow \mathbf{0}$, $\lambda_0 \leftarrow \norm{\nabla R_\delta^n(\mathbf{0})}_\infty$, $N \leftarrow \frac{\log(\lambda_{tgt}/\lambda_0)}{\log \phi} $}
	\For{$t = 1,\dotso,N-1$}{
		$\lambda_t \leftarrow \phi^t\lambda_0$\;
		$\epsilon_t \leftarrow \nu \lambda_t$\;
		$\tilde{\btheta}_t \leftarrow$Proximal-Gradient($\lambda_t,\epsilon_t,\tilde{\btheta}_{t-1},\Omega$)\;
	}
	$\tilde{\btheta}_N	\leftarrow$ Proximal-Gradient($\lambda_{tgt},\epsilon_{tgt},\tilde{\btheta}_{N-1},\Omega$)\
	\caption{ $\btheta \leftarrow$Path-Following($\lambda_0,\lambda_{tgt},\nu,N,\epsilon_{tgt}, \Omega$)}\label{pf_algo}
	\Return {$\tilde{\btheta}_{tgt}=\tilde{\btheta}_N$	}
\end{algorithm}

\begin{algorithm}[ht]
	\SetKwInOut{Input}{input}
	\SetKwInOut{Output}{output}
	\SetKwInOut{Par}{parameter}
	\SetKwInOut{Init}{initialize}
	\Input{$\lambda > 0, \epsilon > 0, \btheta^0 \in \RR^d,\Omega $}
	\Par{$\eta > 0$}
	\Init{ $k \leftarrow 0$}
	\While{$w_\lambda(\btheta^k) > \epsilon$}{
		$k \leftarrow k+1$\;
		$\btheta^{k+1} \leftarrow \cS_{\lambda \eta}\big(\btheta^k,\Omega\big)$\;
	}
	\Return{$\btheta^{k+1}$}
	\caption{ $\btheta \leftarrow$Proximal-Gradient($\lambda,\epsilon,\btheta^0,\Omega$)}\label{proxi_algo}
\end{algorithm}


\section{Theoretical properties}\label{theory}\label{sec_theory}
In this section, we establish theoretical results for the statistical performance of the approximate local estimator $\tilde{\btheta}_{tgt}$ (and the exact local estimator $\widehat{\btheta}_{tgt}$), as well as iteration-complexity of the path-following algorithm. In section \ref{sec31}, we list the required assumptions and analyze the optimization algorithm, in which the upper bounds on the statistical rate of convergence for $\tilde{\btheta}_{tgt}$ (and $\widehat{\btheta}_{tgt}$) are embedded. In section \ref{sec32}, we show that the estimator $\tilde{\btheta}_{tgt}$ (and $\widehat{\btheta}_{tgt}$) are minimax rate optimal up to a logarithmic factor. Section \ref{banergee} contains a detailed comparison with some related works.
For simplicity of exposition, we assume that the weight function $w(y)$ is known. Following the proposal in \cite{xu2014model}, we choose $w(y) = 1/\PP(Y = y)$ in the rest of the theoretical development.

\subsection{Assumptions and upper bound}\label{sec31}
We start from the conditions on the underlying unknown distribution  $P = P(X,Y,\bZ)$.

\begin{assumption}\label{assum_propensity}
	There exists a constant $c>0$ such that $c \leq \PP(Y = 1) \leq 1-c$. 
\end{assumption}

\begin{assumption}\label{assum_bounded}
	For all $j = 1,\dotso,d$ we have $|Z_j| \leq M_n$, where $M_n$ is allowed to increase with $n$ such that $M_n \leq C\sqrt{\frac{n\delta}{\log d}}$ for some constant $C$.
	In addition, for all $j= 1,\dotso,d$ and $y\in \{-1,1\}$, we assume $\EE [Z_j^2|Y = y] \leq M_2$ for some constant $M_2 > 0$.
\end{assumption}
Assumption \ref{assum_propensity} ensures that the weight function $w(y)$ is bounded away from infinity. Assumption \ref{assum_bounded} is a technical condition on the distribution of the covariates $\bZ$. For binary covariate $Z_j\in \{0,1\}$, $|Z_j| \leq M_n$ holds with $M_n=1$. Similarly, if the covariates $\bZ$ are entrywise sub-Gaussian with a bounded sub-Gaussian norm, $\max_{1\leq j\leq d}|Z_j| \leq M_n$ holds with high probability with $M_n=C(\log d)^{1/2}$ for some constant $C$. In this case, Assumption \ref{assum_bounded} requires $\log d\leq C(n\delta)^{1/2}$, which is a mild condition provided $\delta$ does not converge to 0 too fast.

Recall that the smoothed surrogate loss depends on the kernel function $K(\cdot)$. For completeness, we firstly specify regularity conditions for $K(\cdot)$.

\begin{definition}\label{def_kernel1}
	We say function $K(t)$ is a proper kernel if it satisfies
	\begin{enumerate}[label=(\Alph*)]
		\item $K(t) = K(-t)\;\forall\; t\in \RR,$
		\item $|K(t)| \leq K_{\max} < \infty \; \forall\; t\in \RR,$
		\item $\int K(t)dt = 1,$
		\item $\int K^2(t)dt < \infty.$
	\end{enumerate}
	Moreover, we say a kernel is a proper kernel of order $l\geq 1$ if it additionally satisfies
	$$\int t^jK(t)dt = 0, \;\forall\;j = 1,\dotso,l.$$
\end{definition}

%

The following definition and assumption are concerned with the smoothness of the conditional density of $X$ given $\bZ$ and $Y$.

\begin{definition}\label{def_holder_new}
 Let $l = \floor{\beta}$ be the greatest integer strictly less than $\beta>0$. We say $P \in \cP(\beta,L, s')$ 
	if the conditional density $f(x|y,\bz)$ of $X|Y,\bZ$ is $l$ times differentiable w.r.t $x$ for any $y,\bz$, and satisfies
	\beq\label{eq_holder_new}
	 \bigg |  \int \bv^T\bz \big[
	f^{(l)}(\Delta + \btheta^{*T}\bz|y,\bz) - f^{(l)}( \btheta^{*T}\bz|y,\bz)
	\big]f(\bz|y) d\bz \bigg |  \leq L\norm{\bv}_2|\Delta|^{\beta - l},
	\eeq
	for any $y \in \{-1,1\}$, $\bz\in\RR^d$, $\Delta \in \RR$ and $\bv$ in the sparse set $\{\bv\in \RR^d: \norm{\bv}_0 \leq s'\}$.
\end{definition}

\begin{assumption}\label{assum_density}
	Recall that $\|\btheta^*\|_0\leq s$. We assume $P \in \cP(\beta,L,s')$, where $\beta, L$ are constants, and $s' =Cs$ for some sufficiently large constant $C$.  In addition,
	\[\sup_{x\in\RR, y\in \{-1,1 \},\bz\in \RR^d} f(x|y,\bz) < p_{\max} < \infty
	\]
	for some constant $ p_{\max}  > 0$.
\end{assumption}

\begin{remark}
	 Assumption \ref{assum_density} assumes that the conditional density $f(x|y,\bz)$ satisfies a weaker version of H\"older class condition. Adapted from \citet{tsybakovintroduction}, we say that the conditional density $f(x|y,\bz)$ belongs to the uniform H\"older class $\Sigma(\beta,L')$, if $f(x|y,\bz)$ is $ \floor{\beta}$ times differentiable and
	\beq\label{def_holder}
	\big |f^{(\floor{\beta})}(x_1|y,\bz) -  f^{(\floor{\beta})}(x_2|y,\bz)   \big | \leq L' |x_1 - x_2|^{\beta - \floor{\beta}},
	\eeq
for some constant $L'$ uniformly in $\bz,y$.
	In Assumption \ref{assum_density}, instead of assuming $f(x|y,\bz)\in \Sigma(\beta,L')$, we only require a moment condition on the smoothness of $f(x|y,\bz)$. To see this, we relax (\ref{def_holder}) to the following non-uniform H\"older class condition
	\beq\label{def_holder2}
	\big |f^{(\floor{\beta})}(x_1|y,\bz) -  f^{(\floor{\beta})}(x_2|y,\bz)   \big | \leq L(y,\bz) |x_1 - x_2|^{\beta - \floor{\beta}},
	\eeq
for any $x_1, x_2, y, \bz$, where we allow $L(y,\bz)$ to depend on $y,\bz$ and possibly diverge for some $y,\bz$. By (\ref{def_holder2}) and Cauchy-Schwarz inequality, the left hand side of (\ref{eq_holder_new}) is bounded above by
$$
\big[\EE \{(\bv^T\bZ)^2 L^2(y,\bZ)|Y=y\}\big]^{1/2}|\Delta|^{\beta-l}\leq  L\norm{\bv}_2|\Delta|^{\beta - l},
$$
provided the maximum sparse eigenvalue of $\EE(L^2(y,\bZ)\bZ\bZ^T|Y=y)$ is bounded, i.e.,
	\beq\label{maxeigen}
	\sup_{\|\bv\|_0\leq s'}\frac{\bv^T\EE(L^2(y,\bZ)\bZ\bZ^T|Y=y)\bv}{\|\bv\|^2_2}\leq L^2.
	\eeq 	
Thus, the non-uniform H\"older class condition (\ref{def_holder2}) and the sparse eigenvalue condition (\ref{maxeigen}) together imply (\ref{eq_holder_new}) in Assumption \ref{assum_density}. To ease notation, in the rest of the paper we write $\cP(\beta,L) = \cP(\beta,L,s')$.

\end{remark}

In the sequel, we analyze the properties of $\nabla R^n_\delta(\btheta^*)$, which is the key influencing factor for the statistical and computational performance of the proposed estimator. In the existing high-dimensional M-estimation framework such as  \cite{negahban2012unified,wang2013calibrating,loh2013regularized,wang2014optimal}, it is typically assumed that the true parameter $\btheta^*$ is a minimizer of the population risk function. However, in our case, there exists an additional approximation bias due to the smoothed surrogate loss. To see this, recall that $R(\btheta)$ is defined in  (\ref{MCID_linear2}) and $\nabla R(\btheta^*)=0$ by the first order condition. Then, decomposing  $\nabla R^n_\delta(\btheta^*)$ yields
\beq\label{eq_biasvariance}
\nabla R^n_\delta(\btheta^*) = \big( \underbrace{\nabla R^n_\delta(\btheta^*) - \nabla R_\delta(\btheta^*)}_{E_1}  \big) + \big( \underbrace{\nabla R_\delta(\btheta^*) - \nabla R(\btheta^*)}_{E_2}  \big),
\eeq
where $E_1$ can be interpreted as the variation of the smoothed empirical risk function relative to its population version, and $E_2$ is the approximation bias induced by the smoothed surrogate loss. In the aforementioned high-dimensional M-estimation literature, typically we get $E_2=0$. Consequently, in our theoretical analysis, instead of directly controlling $\norm{\nabla R^n_\delta(\btheta^*)}_\infty$ as in the existing literature, it is necessary to deal with $E_1$ and $E_2$ separately and balance their effects on the statistical rate of convergence of the proposed estimator in order to derive sharp upper bounds. In what follows, we control the magnitude of $E_1$ and $E_2$ in Propositions \ref{prop_var} and \ref{prop_bias}, respectively.

\begin{proposition}\label{prop_var}
    Under Assumptions \ref{assum_propensity}-\ref{assum_density}, if $K(\cdot)$ is a proper kernel then with probability greater than $1 - 2d^{-1}$, we have
	\[\norm{\nabla R_\delta^n(\btheta^*) - \nabla R_\delta(\btheta^*)}_\infty  \leq C_1\sqrt{\frac{\log d}{n\delta}},\]
	where $C_1$ is a constant independent of $n$ and $d$.
\end{proposition}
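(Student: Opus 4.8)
The plan is to write $\nabla R_\delta^n(\btheta^*)-\nabla R_\delta(\btheta^*)$ coordinatewise as an average of i.i.d.\ centered random variables and then apply a Bernstein-type concentration inequality followed by a union bound over the $d$ coordinates. First I would compute the gradient of the smoothed loss in closed form: since $L_{\delta,K}'(u)=-\delta^{-1}K(u/\delta)$, one gets $\nabla_\btheta L_{\delta,K}\big(y(x-\btheta^T\bz)\big)=\delta^{-1}K\big(y(x-\btheta^T\bz)/\delta\big)\,y\bz$, consistent with (\ref{eq_grad_smooth}). Hence the $j$-th coordinate of $\nabla R_\delta^n(\btheta^*)-\nabla R_\delta(\btheta^*)$ equals $\frac1n\sum_{i=1}^n(\xi_{ij}-\EE\xi_{ij})$, where
\[
\xi_{ij}=w(y_i)\,\delta^{-1}K\Big(\frac{y_i(x_i-\btheta^{*T}\bz_i)}{\delta}\Big)\,y_i z_{ij},
\]
and for each fixed $j$ the $\xi_{ij}$, $i=1,\dots,n$, are i.i.d.

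The next step is to control the two moments entering Bernstein's inequality. For the uniform bound, Assumption \ref{assum_propensity} makes $w(y)=1/\PP(Y=y)$ bounded by a constant, part (B) of Definition \ref{def_kernel1} gives $|K|\le K_{\max}$, and Assumption \ref{assum_bounded} gives $|z_{ij}|\le M_n$, so $|\xi_{ij}|\lesssim M_n/\delta$. For the variance, I would condition on $(Y,\bZ)$, change variables $t=(x-\btheta^{*T}\bz)/\delta$ in the integral defining $\EE[\delta^{-2}K^2(\cdot)\mid Y,\bZ]$, and invoke $\sup f(x|y,\bz)\le p_{\max}$ from Assumption \ref{assum_density} together with $\int K^2<\infty$ to get $\EE[\delta^{-2}K^2(\cdot)\mid Y,\bZ]\le C/\delta$; averaging over $(Y,\bZ)$ and using $\EE[Z_j^2\mid Y]\le M_2$ from Assumption \ref{assum_bounded} then yields $\mathrm{Var}(\xi_{ij})\le\EE[\xi_{ij}^2]\lesssim 1/\delta$, uniformly in $j$.

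With these two estimates, Bernstein's inequality for bounded i.i.d.\ summands gives, for any $t>0$,
\[
\PP\Big(\Big|\frac1n\sum_{i=1}^n(\xi_{ij}-\EE\xi_{ij})\Big|>t\Big)\le 2\exp\!\Big(-\frac{c\,n t^2}{\delta^{-1}+M_n t\,\delta^{-1}}\Big).
\]
Choosing $t=C_1\sqrt{\log d/(n\delta)}$ makes $nt^2=C_1^2\log d/\delta$, while the correction term in the denominator is $M_n t\,\delta^{-1}\asymp \delta^{-1}M_n\sqrt{\log d/(n\delta)}$, which is at most of the same order as the variance term $\delta^{-1}$ precisely when $M_n\lesssim\sqrt{n\delta/\log d}$ --- exactly the rate restriction imposed in Assumption \ref{assum_bounded}. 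Under that restriction the exponent is $\gtrsim C_1^2\log d$ up to absolute constants, so for $C_1$ large enough the tail probability is at most $2d^{-2}$, and a union bound over $j=1,\dots,d$ gives the claimed probability $1-2d^{-1}$.

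The only step requiring real care is the variance computation: the change of variables must produce the correct $\delta^{-1}$ scaling (not $\delta^{-2}$), which hinges on boundedness of the conditional density, and one must then check that the Bernstein correction term is genuinely subordinate under Assumption \ref{assum_bounded}, so that the concentration is effectively sub-Gaussian at the scale $\sqrt{\log d/(n\delta)}$. Everything else is routine bookkeeping with the constants from Assumptions \ref{assum_propensity}--\ref{assum_density}.
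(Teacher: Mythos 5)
Your proposal is correct and follows essentially the same route as the paper: express each coordinate of $\nabla R_\delta^n(\btheta^*)-\nabla R_\delta(\btheta^*)$ as a centered i.i.d.\ average, bound the summands uniformly by $O(M_n/\delta)$ and their second moment by $O(1/\delta)$ via conditioning, a change of variables, and $\sup f(x|y,\bz)\le p_{\max}$ (this is exactly the paper's auxiliary Lemma \ref{lemma_variance}), then apply Bernstein's inequality with $t\asymp\sqrt{\log d/(n\delta)}$ and a union bound over the $d$ coordinates, using $M_n\lesssim\sqrt{n\delta/\log d}$ to keep the correction term subordinate. No gaps; the bookkeeping with the constants matches the paper's argument.
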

\begin{proof}
	See Section \ref{proof_prop_var} for the  detailed proof.
\end{proof}

\begin{proposition}\label{prop_bias}
	Under Assumptions \ref{assum_propensity}-\ref{assum_density}, by choosing a proper kernel $K(\cdot)$ of order $l = \floor{\beta}$ satisfying $\int |t^{\beta}||K(t)|dt <  \infty$, we have for any  $\bv \in \RR^d$ with $\norm{\bv}_0 \leq s'$,
	\[
	\big | \bv^T(\nabla R_\delta(\btheta^*) - \nabla R(\btheta^*))\big | \leq C_2\delta^\beta \norm{\bv}_2,
	\]
	where $s'$ is defined in Assumption \ref{assum_density} and $C_2$ is a constant independent of $n$ and $d$.
\end{proposition}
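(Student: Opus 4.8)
The plan is to recognize the left‑hand side as the bias of a one‑dimensional kernel smoother. Using the gradient formulas (\ref{eq_grad_original}) and (\ref{eq_grad_smooth}), the symmetry of $K$ (so that $K(y(x-\btheta^{*T}\bz)/\delta)=K((x-\btheta^{*T}\bz)/\delta)$), the substitution $t=(x-\btheta^{*T}\bz)/\delta$, and the identities $w(y)f(\bz,y)=f(\bz|y)$ and $\int K=1$, I would first rewrite
\[
\bv^T\big(\nabla R_\delta(\btheta^*)-\nabla R(\btheta^*)\big)=\sum_{y=\pm1}y\int K(t)\,\big(g_y(\delta t)-g_y(0)\big)\,dt,\qquad g_y(\Delta):=\int_{\bZ}(\bv^T\bz)\,f(\btheta^{*T}\bz+\Delta|y,\bz)\,f(\bz|y)\,d\bz .
\]
It then suffices to bound, for each $y$, the bias $\int K(t)(g_y(\delta t)-g_y(0))\,dt$ of applying the kernel $K$ with bandwidth $\delta$ to the scalar function $g_y$ at the point $0$.

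Next I would carry out the classical kernel‑bias argument at the level of $g_y$. Write Taylor's formula with integral remainder for $f(\cdot|y,\bz)$ in its first argument to order $l=\floor{\beta}$, multiply by $(\bv^T\bz)f(\bz|y)$, and integrate over $\bz$ (interchanging the $\bz$‑integral with the remainder integral over $[0,1]$, legitimate under the differentiability and boundedness of $f$ in Assumption \ref{assum_density}); this writes $g_y(\Delta)-g_y(0)$ as a degree‑$l$ polynomial in $\Delta$ with zero constant term plus the remainder $\frac{\Delta^l}{(l-1)!}\int_0^1(1-\tau)^{l-1}\big(h_y(\tau\Delta)-h_y(0)\big)\,d\tau$, where $h_y(\Delta):=\int_{\bZ}(\bv^T\bz)f^{(l)}(\btheta^{*T}\bz+\Delta|y,\bz)f(\bz|y)\,d\bz$. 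Multiplying by $K(t)$ with $\Delta=\delta t$ and integrating, the moment conditions $\int t^jK(t)\,dt=0$, $j=1,\dots,l$, of the chosen order‑$l$ kernel annihilate the polynomial part, leaving only $\frac{\delta^l}{(l-1)!}\int K(t)\,t^l\int_0^1(1-\tau)^{l-1}\big(h_y(\tau\delta t)-h_y(0)\big)\,d\tau\,dt$. Now $h_y(\tau\delta t)-h_y(0)$ is exactly the quantity on the left of the H\"older‑type bound (\ref{eq_holder_new}) in Definition \ref{def_holder_new} with $\Delta=\tau\delta t$; since $\norm{\bv}_0\le s'$, that bound applies and yields $|h_y(\tau\delta t)-h_y(0)|\le L\norm{\bv}_2|\tau\delta t|^{\beta-l}$. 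Substituting, the integrals separate into $\int|t|^\beta|K(t)|\,dt$, finite by hypothesis, and a Beta integral $\int_0^1(1-\tau)^{l-1}\tau^{\beta-l}\,d\tau$, finite because $l\ge1$ and $\beta-l>-1$, while the powers of $\delta$ combine to $\delta^l\delta^{\beta-l}=\delta^\beta$; summing the two terms $y=\pm1$ gives $|\bv^T(\nabla R_\delta(\btheta^*)-\nabla R(\btheta^*))|\le C_2\delta^\beta\norm{\bv}_2$ with $C_2=\frac{2L}{(l-1)!}\big(\int_0^1(1-\tau)^{l-1}\tau^{\beta-l}\,d\tau\big)\int|t^\beta||K(t)|\,dt$, which depends only on $\beta$, $L$ and $K$, not on $n$ or $d$. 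The degenerate case $l=0$ ($\beta\in(0,1]$) is the base step: there I would apply (\ref{eq_holder_new}) with $l=0$ directly to $g_y(\delta t)-g_y(0)$, obtaining the same conclusion with $C_2=2L\int|t^\beta||K(t)|\,dt$.

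I expect the work to be bookkeeping rather than conceptual, with two points to watch. The first is lining up the Taylor remainder with the \emph{integrated} form of the H\"older condition (\ref{eq_holder_new}): the assumption is deliberately stated as an integral of $f^{(l)}$ against $(\bv^T\bz)f(\bz|y)$ over the sparse set $\{\norm{\bv}_0\le s'\}$, rather than pointwise in $\bz$, so I must make sure the $s'=Cs$ slack in Assumption \ref{assum_density} covers the vectors $\bv$ that appear (it does, as $\bv$ is fixed with $\norm{\bv}_0\le s'$). The second is justifying the interchanges of integration and the integral form of Taylor's theorem, which I expect to follow routinely from the differentiability of $f$ in $x$ and the uniform bound $f\le p_{\max}$. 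Conceptually, the result is just the classical fact that an order‑$l$ kernel applied to a function whose $l$‑th derivative is $(\beta-l)$‑H\"older has bias of order $\delta^\beta$; in particular the bias shrinks faster for smoother conditional densities, which is what eventually produces the improved estimation rate in Theorem \ref{theorem_main2}.
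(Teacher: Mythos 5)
Your proposal is correct and follows essentially the same route as the paper's proof: decompose by $y=\pm1$, change variables in the kernel integral, Taylor expand the conditional density to order $l=\floor{\beta}$, use the order-$l$ moment conditions of $K$ to annihilate the polynomial part, and bound the remainder via the integrated H\"older condition (\ref{eq_holder_new}), yielding the same $\delta^\beta\norm{\bv}_2$ bound up to the value of $C_2$. The only substantive differences are refinements in your favor: you use the integral-form Taylor remainder, so the variable $\tau$ is a genuine integration variable rather than a Lagrange intermediate point depending on $\bz$ (the paper's form makes the direct application of (\ref{eq_holder_new}), which requires a fixed $\Delta$, slightly delicate), and you explicitly handle the degenerate case $l=0$, which the paper's expansion glosses over.
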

\begin{proof}
	See Section \ref{proof_prop_bias} for the  detailed proof.
\end{proof}

The two propositions imply that the standard deviation and the bias are of order $O_p(\sqrt{\frac{\log d}{n\delta}})$ and $O(\delta^{\beta})$, respectively, which are similar to the results in kernel density estimation problems \citep{tsybakovintroduction}. Here, an extra $\log d$ factor appears in the standard deviation term to account for the high-dimensionality. While the smoothed population risk $R_\delta(\btheta^*)$ becomes a better approximation of $R(\btheta^*)$ by decreasing $\delta$ to $0$ in a very fast rate, it incurs high variability in the smoothed empirical risk $ R_\delta^n(\btheta^*)$ which makes the estimator less stable. Therefore, one would expect that the optimal estimator is attained via an appropriate $\delta$ that balances the bias and variance.
We will return to this problem after Theorem \ref{theorem_main2}.

In what follows, we make assumptions on the curvature of the empirical risk function.

\begin{assumption}\label{RSC}
	There exists a set $\Omega = \{\bv:\norm{\bv}_2 \leq R\}$ for some $R$ possibly increases with $n,d$, such that $\btheta^* \in \Omega$ and
	the following restricted strong convexity (RSC) and restricted smoothness (RSM) conditions hold over sparse vectors in $\Omega$, that is, for any $\btheta, \btheta' \in \Omega$ and $\norm{\btheta'}_0,\norm{ \btheta}_0 \leq s + \tilde{s}$,
	\beq\label{RSC_condi}
	R_\delta^n(\btheta') \geq R_\delta^n(\btheta) + \nabla R_\delta^n(\btheta)^T(\btheta' - \btheta) + \frac{1}{2} \rho_{-}\norm{\btheta' - \btheta}^2_2,
	\eeq
	and
	\beq\label{RS_condi}
	R_\delta^n(\btheta') \leq R_\delta^n(\btheta) + \nabla R_\delta^n(\btheta)^T(\btheta' - \btheta) + \frac{1}{2} \rho_{+}\norm{\btheta' - \btheta}^2_2,
	\eeq
	where $0 < \rho_{-}\leq \rho_{+} < +\infty$ are two constants and $\tilde s =Cs$ with the constant $C$ specified in Lemma \ref{lemma_sparse}.
	

\end{assumption}
Assumption \ref{RSC} is the key to the fast statistical and computational rate of the path-following algorithm. Similar conditions, including the restricted isometry property (RIP) and sparse eigenvalue condition have been discussed extensively in \citet{candes2005decoding,NIPS2010_3984,negahban2012unified,loh2013regularized,wang2013calibrating}.
In this assumption, we require that the empirical risk is $\rho_{-}$-strongly convex and $\rho_{+}$-smooth when restricted to sparse vectors  in the set $\Omega$.
In general, when $d$ is fixed and $n$ goes to infinity, the empirical risk $R_\delta^n(\btheta)$ will be strongly convex and smooth locally in $\Omega$ if the corresponding population risk $R_\delta(\btheta)$ is strongly convex and smooth in the same region under mild conditions. If the kernel is chosen such that $R_\delta(\btheta)$ is twice differentiable, it is equivalent to saying that the minimum and maximum sparse eigenvalues of the population Hessian $ \nabla^2R_\delta(\btheta)$ are bounded away from zero and infinity for $\btheta \in \Omega$. Under the high-dimensional regime, however, the empirical Hessian $\nabla^2 R_\delta^n(\btheta)$, if exists, is generally singular. To fix this issue, Assumption \ref{RSC} only requires the convexity and smoothness in certain directions represented by sparse vectors. The path-following framework exploits this assumption by constructing a sequence of sparse intermediate estimators approaching the final estimator, and thus achieves fast convergence under Assumption \ref{RSC}.
 \begin{remark}
Due to model-free formulation in (\ref{MCID_linear2}), the Assumption \ref{RSC} can be verified in a case by case manner under specific distributional assumptions. As an illustration, in Section \ref{app_RSC} in the Appendix, we show that under some regularity conditions Assumption \ref{RSC} holds for the conditional mean model with high probability.
 \end{remark}

Now we are ready to characterize the iteration-complexity of the path-following algorithm as well as the statistical performance of the estimator $\tilde{\btheta}_{tgt}$, which is the approximate local solution to (\ref{eq_penalized_obj}) from the proximal-gradient algorithm with the tuning parameter $\lambda_{tgt}$.  

\begin{theorem}\label{theorem_main2}
		Under Assumptions \ref{assum_propensity}-\ref{RSC},  with a proper kernel $K(\cdot)$ of order $l = \floor{\beta}$ satisfying $\int |t^{\beta}||K(t)|dt <  \infty$, by choosing $\nu = 0.25$, $\phi= 0.9$, $\eta \leq \frac{1}{\rho_{+}}$,
		$\lambda_{tgt} = 8C_1\sqrt{\frac{\log d}{n\delta}}$ where $C_1$ is defined in Proposition \ref{prop_var}
		and $\delta = c\big(\frac{s \log d}{n}\big)^{1/(2\beta+1)}$ for some constant $c > 0$,  then with probability greater than $1 - 2d^{-1}$ the following results hold:
	\begin{enumerate}
		\item\label{main_result1} The final approximate local solution $\tilde{\btheta}_{tgt}$ from the path-following algorithm satisfies
		\[
		\begin{aligned}
		&\norm{\tilde{\btheta}_{tgt} - \btheta^*}_2 \lesssim \bigg(\frac{s \log d}{n}\bigg)^{\beta/(2\beta+1)},\\
		&\norm{\tilde{\btheta}_{tgt} - \btheta^*}_1 \lesssim \sqrt{s}\bigg(\frac{s \log d}{n}\bigg)^{\beta/(2\beta+1)}.
		\end{aligned}
		\]
		
		\item\label{main_result2}  It takes no more than
		\[
		1 + I'\log(4I\sqrt{s})
		\]
		iterations to compute each of the first $N-1$ stage of Algorithm \ref{pf_algo}, and
		\[
		\max\big\{ 1 + (I'\log(I \sqrt{s}\lambda_{tgt}/\epsilon_{tgt}), 0 \big\}
		\]
		iterations to compute final stage $N$, where $I =\sqrt{2(\frac 1\eta + \rho_{+})^2\eta \bar{C}_2/\rho_{-}}$ and $I' = \frac 2 {\log\big(\frac{4}{4 - \eta\rho_{-}}\big)}$.
	\end{enumerate}
\end{theorem}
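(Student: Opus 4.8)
The plan is to prove both parts simultaneously by an induction over the homotopy stages $t = 0, 1, \dots, N$, combining the bias--variance decomposition \eqref{eq_biasvariance} with the restricted strong convexity of Assumption \ref{RSC}. Throughout I would condition on the event of probability at least $1 - 2d^{-1}$ on which Proposition \ref{prop_var} holds, so that $E_1 := \nabla R_\delta^n(\btheta^*) - \nabla R_\delta(\btheta^*)$ satisfies $\|E_1\|_\infty \le C_1\sqrt{\log d/(n\delta)} = \lambda_{tgt}/8$; the bias $E_2 := \nabla R_\delta(\btheta^*) - \nabla R(\btheta^*)$ is controlled deterministically by Proposition \ref{prop_bias}, which gives $|\bv^T E_2| \le C_2\delta^\beta\|\bv\|_2$ for every $\bv$ with $\|\bv\|_0 \le s'$ (recall $\nabla R(\btheta^*) = 0$). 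The essential structural point---the one a naive argument misses---is that $E_2$ must never be bounded in $\ell_\infty$ and paired with an $\ell_1$ norm: doing so would cost a spurious $\sqrt s$ factor and inflate the rate to $\sqrt s\,\delta^\beta$. Instead, whenever $E_2$ appears it is paired with an error vector $\Delta = \btheta - \btheta^*$ already known to be $s'$-sparse (via Lemma \ref{lemma_sparse}), and the sparse bound $|\Delta^T E_2| \le C_2\delta^\beta\|\Delta\|_2$ is invoked directly; this is exactly why $s' = Cs$ with $C$ large is imposed in Assumption \ref{assum_density}.

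For the per-stage analysis, fix $\lambda = \lambda_t$ and let $\widehat{\btheta}_t$ be the exact local solution of \eqref{eq_penalized_obj} at $\lambda_t$. I would first invoke Lemma \ref{lemma_sparse} to show that $\widehat{\btheta}_t$ and every proximal-gradient iterate $\btheta^k_t$ of Algorithm \ref{proxi_algo} stay in $\Omega$ and have support size at most $s + \tilde s \le s'$, so that Assumption \ref{RSC} and Proposition \ref{prop_bias} both apply to them and to the error vectors $\widehat{\btheta}_t - \btheta^*$, $\btheta^k_t - \btheta^*$. Feeding the RSC inequality into the basic inequality $R^n_\delta(\widehat{\btheta}_t) + \lambda_t\|\widehat{\btheta}_t\|_1 \le R^n_\delta(\btheta^*) + \lambda_t\|\btheta^*\|_1$ and splitting $-\nabla R^n_\delta(\btheta^*) = -E_1 - E_2$ yields, after a short case split absorbing $C_2\delta^\beta\|\widehat{\btheta}_t - \btheta^*\|_2$ into the strong-convexity term, a cone condition $\|(\widehat{\btheta}_t - \btheta^*)_{S^c}\|_1 \lesssim \|(\widehat{\btheta}_t - \btheta^*)_S\|_1$ and hence $\|\widehat{\btheta}_t - \btheta^*\|_2 \lesssim (\sqrt s\,\lambda_t + \delta^\beta)/\rho_-$, together with the matching $\ell_1$ bound. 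On the optimization side I would use the standard proximal-gradient contraction for objectives that are $\rho_-$-strongly convex and $\rho_+$-smooth along sparse directions: the objective suboptimality, and with it the measure $\omega_{\lambda_t}$ from \eqref{equ_subopti}, contracts geometrically with a factor determined by $\eta\rho_-$, so that from the warm start $\tilde\btheta_{t-1}$---which is $O(\sqrt s\,\lambda_t/\rho_-)$-close to $\widehat{\btheta}_t$ because $\lambda$ has changed only by the factor $\phi$---it takes $O(\log(I\sqrt s))$ iterations to reach precision $\epsilon_t = \nu\lambda_t$. A direct computation with $\nu = 0.25$, $\phi = 0.9$, $\eta \le 1/\rho_+$ then confirms that $\tilde\btheta_t$ still lies in the contraction region of stage $t+1$, closing the induction.

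It remains to assemble the two conclusions. For Part \ref{main_result2}, summing the per-stage counts over the first $N - 1$ stages (each $O(\log(I\sqrt s))$) and adding the final stage run to precision $\epsilon_{tgt}$ (which contributes the extra $\log(\lambda_{tgt}/\epsilon_{tgt})$ factor) gives the stated bounds, once $I = \sqrt{2(\tfrac1\eta + \rho_+)^2\eta\bar C_2/\rho_-}$ and $I' = 2/\log(4/(4 - \eta\rho_-))$ are read off from the contraction factor and the RSC/RSM constants. For Part \ref{main_result1}, the per-stage bound at $t = N$ gives $\|\tilde\btheta_{tgt} - \btheta^*\|_2 \lesssim \sqrt s\,\lambda_{tgt} + \delta^\beta + \epsilon_{tgt}$; with $\lambda_{tgt} = 8C_1\sqrt{\log d/(n\delta)}$ one has $\sqrt s\,\lambda_{tgt} \asymp \sqrt{s\log d/(n\delta)}$, and $\delta = c(s\log d/n)^{1/(2\beta+1)}$ is precisely the choice that balances $\sqrt{s\log d/(n\delta)} \asymp \delta^\beta$, i.e. $\delta^{2\beta+1} \asymp s\log d/n$, making both equal to $(s\log d/n)^{\beta/(2\beta+1)}$; the $\ell_1$ bound follows by multiplying through the cone condition by $\sqrt s$. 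I expect the main obstacle to be exactly this refined, separate bookkeeping of $E_1$ and $E_2$: the bias has to be propagated through every stage of the path only against $s'$-sparse vectors, in lockstep with the sparsity invariant from Lemma \ref{lemma_sparse}, since a single crude $\ell_\infty$ bound on $E_2$ anywhere along the homotopy would already forfeit the optimal rate.
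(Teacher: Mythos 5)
Your overall architecture matches the paper's: the same high-probability event from Proposition \ref{prop_var}, the same insistence on pairing the bias $E_2$ only with $s'$-sparse error vectors via Proposition \ref{prop_bias} (this is indeed the refinement that saves the rate), the stage-wise induction with warm starts, the sparsity invariant from Lemma \ref{lemma_sparse}, the geometric contraction of the proximal-gradient iterates, and the final balancing $\sqrt{s}\lambda_{tgt}\asymp\delta^\beta$. But there is one step that, as written, would fail and it sits at the heart of the per-stage statistical bound. You propose to feed RSC into the ``basic inequality'' $R^n_\delta(\widehat{\btheta}_t)+\lambda_t\norm{\widehat{\btheta}_t}_1\le R^n_\delta(\btheta^*)+\lambda_t\norm{\btheta^*}_1$. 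That inequality is the defining property of a \emph{global} minimizer; here $R^n_\delta$ is nonconvex and $\widehat{\btheta}_t$ is only a local solution, so nothing guarantees $f_{\lambda_t}(\widehat{\btheta}_t)\le f_{\lambda_t}(\btheta^*)$. The entire point of the paper's machinery is to manufacture a usable substitute: Lemma \ref{lemma_initialization} converts the warm-start condition $\omega_{\lambda_t}(\btheta^0_t)\le\tfrac12\lambda_t$ into the slack-augmented bound $f_{\lambda_t}(\btheta^0_t)-f_{\lambda_t}(\btheta^*)\le\tfrac{\bar C_2}{2\rho_-}(\delta^{2\beta}\lor\sqrt{s}\delta^\beta\lambda_t\lor s\lambda_t^2)$, and Lemma \ref{lemma_monotone} propagates this bound to every iterate (and hence to the limit $\widehat{\btheta}_t$) because the objective is nonincreasing along the proximal-gradient path. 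Only then does Lemma \ref{lemma_norm} extract the cone condition and the $\ell_1/\ell_2$ error bounds from that function-value bound. Without this detour your cone condition and per-stage rate are unjustified.

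A related, smaller point: you invoke Lemma \ref{lemma_sparse} up front to declare all iterates sparse, but its hypotheses require the very norm bounds you are trying to prove, so sparsity and the function-value bound must be established \emph{jointly} by the within-stage induction of Proposition \ref{prop_key} (sparsity at step $k-1$ plus monotone decrease of $f_{\lambda_t}$ gives the hypotheses of Lemma \ref{lemma_sparse} at step $k$). You gesture at this ``lockstep'' at the end, and your iteration counts, the role of Lemma \ref{good_ini} in closing the induction across stages, and the final assembly of the rates are all consistent with the paper; repairing the basic-inequality step along the lines above would make the argument complete.
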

\begin{proof}
	See Section \ref{proof_theo_main2} for the  detailed proof.
\end{proof}

The results in Theorem \ref{theorem_main2} are two folds. The first part of this theorem is about the statistical rates of convergence for the final estimator $\tilde{\btheta}_{tgt}$. Not surprisingly, due to the bias and variance decomposition in (\ref{eq_biasvariance}), the rates of convergence in the $\ell_2$ and $\ell_1$ estimation errors are nonstandard and slower than the rates $(s\log d/n)^{1/2}$ and $s(\log d/n)^{1/2}$ appearing in most of the high-dimensional estimation problems, such as linear regression with Lasso or nonconvex penalty \citep{buhlmann2011statistics}. In addition, our result implies that the rates of convergence of $\tilde{\btheta}_{tgt}$ are faster as the conditional density of $X$ given $\bZ, Y$ becomes smoother (i.e., with a larger $\beta$). The rates can be close to the standard rates $(s\log d/n)^{1/2}$ and $s(\log d/n)^{1/2}$ if the condition density is smooth enough. This is an important advantage of the proposed estimator, see section \ref{banergee} for the comparison with related estimators.

To further decipher the statistical rate, a closer look at the proof reveals that the resulting rate is a consequence of balancing the following two terms
\[ \delta^{\beta}~~~\text{and}~~~  \sqrt{s}\lambda_{tgt} \asymp  \sqrt{\frac{s \log d}{n\delta}},\]
where the first term corresponds to the approximation bias in Proposition \ref{prop_bias} and the second term comes from the estimation error of $\tilde{\btheta}_{tgt}$ towards the minimizer of $R_\delta(\btheta)$ in (\ref{eq_Rdelta}) (rather than $\btheta^*$). Indeed, directly controlling $\norm{\nabla R^n_\delta(\btheta^*)}_\infty$ via Propositions \ref{prop_var} and \ref{prop_bias} as in the previous M-estimation literature \citep{negahban2012unified,wang2013calibrating,loh2013regularized,wang2014optimal} leads to suboptimal rates $s^{1/2}(\log d/n)^{\beta/(2\beta+1)}$ and $s(\log d/n)^{\beta/(2\beta+1)}$ for $\ell_2$ and $\ell_1$ estimation errors. To attain the optimal rate, we conduct a more refined analysis for the path-following algorithm by separately bounding the approximation bias and the estimation error. This makes the current analysis different from the existing ones.

The second part of Theorem \ref{theorem_main2}
is concerned with the iteration complexity of the path-following algorithm. Recall that for the first $N-1$ stages, we only need to solve the local solution given by Algorithm \ref{proxi_algo} to a relatively low precision. Thus by taking $\epsilon_{tgt}$ sufficiently small, Theorem \ref{main_result2} indicates that computing the whole regularization path
takes no more than $O(\log (1/\epsilon_{tgt}))$ number of proximal-gradient steps, which is known to be optimal among all first-order optimization methods.

From the statistical perspective, by taking $\epsilon_{tgt}=c\lambda_{tgt}$ for some very small constant $c$, the computational complexity of the algorithm is dominated by $O(N\log (\sqrt{s}))$, due to the iterations in the first $N-1$ stages. Recall that $N = \frac{\log(\lambda_{tgt}/\lambda_0)}{\log \phi}$, where $\phi$ is an absolute constant, say $\phi = 0.9$. Thus, up to some additive constant, the total number of path-following stages is
$$
N=\frac{\beta}{(2\beta+1)\log (0.9^{-1})}\log (\frac{n}{s\log d}).
$$
Interestingly, we find that, as $\beta$ decreases, the required total number of stages in the path-following algorithm decreases. In particular, in our case $N$ is smaller than that in generalized linear models (GLMs) with Lasso penalty, where the latter requires $\frac{1}{2\log (0.9^{-1})}\log (\frac{n}{s\log d})$ path-following stages using the same constant $\phi=0.9$ \citep{wang2014optimal}. The comparison with GLMs reveals a new phenomenon on the statistical and computational trade-off, i.e., computing our estimator which has slower statistical rates requires less computational cost in terms of the number of path-following stages. The intuition is that there is no benefit to further iterating the path-following stages once our estimator has already reached the desired statistical rate.

\begin{remark}
Note that to achieve the rates in Theorem \ref{theorem_main2}, the  bandwidth parameter $\delta$ needs to be set in the order of $\big(\frac{s \log d}{n}\big)^{1/(2\beta+1)}$, which depends on the unknown sparsity level $s$ and smoothness $\beta$. In section \ref{adaptive}, we will discuss how to perform adaptive estimation if either $s$ or $\beta$ is unknown.
\end{remark}

The next corollary characterizes the rate of convergence of $\hat{\btheta}_{tgt}$, the exact local solution to the minimization problem (\ref{eq_penalized_obj}) with the target regularization parameter $\lambda_{tgt}$.

 \begin{corollary}[Statistical rate of convergence]\label{corollary_main1}
	Under the same conditions in Theorem \ref{theorem_main2},  with probability greater than $1 - 2d^{-1}$, the exact location solutions $\widehat{\btheta}_{tgt}$ satisfies
	\[
	\begin{aligned}
	&\norm{\hat{\btheta}_{tgt} - \btheta^*}_2 \lesssim \bigg(\frac{s \log d}{n}\bigg)^{\beta/(2\beta+1)}~~~\text{and}
	&\norm{\hat{\btheta}_{tgt} - \btheta^*}_1 \lesssim \sqrt{s}\bigg(\frac{s \log d}{n}\bigg)^{\beta/(2\beta+1)}.
	\end{aligned}
	\]
\end{corollary}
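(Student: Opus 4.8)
The plan is to obtain $\widehat{\btheta}_{tgt}$ as the limit of the approximate solutions produced at the last stage of Algorithm~\ref{pf_algo} and to inherit the bounds of Theorem~\ref{theorem_main2} in the limit. Throughout I work on the same event of probability at least $1-2d^{-1}$ and keep all tuning parameters ($\nu=0.25$, $\phi=0.9$, $\eta\le 1/\rho_+$, $\lambda_{tgt}=8C_1\sqrt{\log d/(n\delta)}$, $\delta=c(s\log d/n)^{1/(2\beta+1)}$) exactly as in Theorem~\ref{theorem_main2}, while letting the precision $\epsilon_{tgt}>0$ be free. Here $\widehat{\btheta}_{tgt}$ is understood as the exact local solution tracked by the path-following procedure, i.e. the fixed point to which the final-stage proximal-gradient iterates converge. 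First I would recall from the analysis behind Theorem~\ref{theorem_main2} that at stage $N$ the iterates $\{\btheta^k\}$ are warm-started from $\tilde\btheta_{N-1}$, which lies in the fast local convergence region for $\lambda_{tgt}$; by Lemma~\ref{lemma_sparse} and the sparsity control used in that proof they stay $(s+\tilde s)$-sparse and inside $\Omega$, and they converge geometrically to a fixed point of the map $\cS_{\lambda_{tgt}\eta}(\cdot,\Omega)$. By the first-order characterization encoded in $\omega_{\lambda_{tgt}}(\cdot)$ this fixed point is precisely $\widehat{\btheta}_{tgt}$, and the restricted strong convexity in Assumption~\ref{RSC} forces it to be the unique such solution among sparse vectors in $\Omega$; in particular $\widehat{\btheta}_{tgt}\in\Omega$, $\|\widehat{\btheta}_{tgt}\|_0\le s+\tilde s$, and $\omega_{\lambda_{tgt}}(\widehat{\btheta}_{tgt})=0$, so $\widehat{\btheta}_{tgt}$ is well defined.

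Next I would compare $\widehat{\btheta}_{tgt}$ with the algorithmic output $\tilde{\btheta}_{tgt}$. Using the subgradient optimality of $\widehat{\btheta}_{tgt}$ (namely $\omega_{\lambda_{tgt}}(\widehat{\btheta}_{tgt})=0$), the stopping criterion $\omega_{\lambda_{tgt}}(\tilde{\btheta}_{tgt})\le\epsilon_{tgt}$, the feasibility $\widehat{\btheta}_{tgt}\in\Omega$, monotonicity of the subdifferential of $\|\cdot\|_1$, and the restricted strong convexity of $R_\delta^n$ over $(s+\tilde s)$-sparse vectors in $\Omega$, one gets
\[
\rho_-\,\|\tilde{\btheta}_{tgt}-\widehat{\btheta}_{tgt}\|_2^2 \;\le\; \langle \nabla R_\delta^n(\tilde{\btheta}_{tgt})-\nabla R_\delta^n(\widehat{\btheta}_{tgt}),\,\tilde{\btheta}_{tgt}-\widehat{\btheta}_{tgt}\rangle \;\le\; \epsilon_{tgt}\,\|\tilde{\btheta}_{tgt}-\widehat{\btheta}_{tgt}\|_1 \;\lesssim\; \epsilon_{tgt}\,\sqrt{s}\,\|\tilde{\btheta}_{tgt}-\widehat{\btheta}_{tgt}\|_2 ,
\]
where the last step uses that $\tilde{\btheta}_{tgt}-\widehat{\btheta}_{tgt}$ is $2(s+\tilde s)$-sparse and $\tilde s=Cs$. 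Hence $\|\tilde{\btheta}_{tgt}-\widehat{\btheta}_{tgt}\|_2\lesssim \sqrt{s}\,\epsilon_{tgt}/\rho_-$ and, by sparsity again, $\|\tilde{\btheta}_{tgt}-\widehat{\btheta}_{tgt}\|_1\lesssim s\,\epsilon_{tgt}/\rho_-$. Combining with the triangle inequality and the bounds of Theorem~\ref{theorem_main2},
\[
\|\widehat{\btheta}_{tgt}-\btheta^*\|_2 \;\lesssim\; \Big(\tfrac{s\log d}{n}\Big)^{\beta/(2\beta+1)} + \tfrac{\sqrt{s}\,\epsilon_{tgt}}{\rho_-}, \qquad \|\widehat{\btheta}_{tgt}-\btheta^*\|_1 \;\lesssim\; \sqrt{s}\Big(\tfrac{s\log d}{n}\Big)^{\beta/(2\beta+1)} + \tfrac{s\,\epsilon_{tgt}}{\rho_-}.
\]
Since $\epsilon_{tgt}$ is arbitrary, letting $\epsilon_{tgt}\downarrow 0$ removes the residual optimization terms and yields the stated $\ell_2$ and $\ell_1$ rates. (Equivalently, one may simply re-run the proof of Theorem~\ref{theorem_main2} directly for $\widehat{\btheta}_{tgt}$, which is admissible because it is $(s+\tilde s)$-sparse, lies in $\Omega$, and satisfies $\omega_{\lambda_{tgt}}(\widehat{\btheta}_{tgt})=0\le\epsilon_{tgt}$ for every $\epsilon_{tgt}>0$.)

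The step I expect to require the most care is the bookkeeping that makes the limiting argument legitimate: one must confirm, from the proof of Theorem~\ref{theorem_main2}, that the final-stage iterates remain uniformly $(s+\tilde s)$-sparse and feasible in $\Omega$ for all large $k$—so that their limit $\widehat{\btheta}_{tgt}$ is itself sparse and in $\Omega$ and the restricted-strong-convexity comparison above is applicable—and that the optimization error enters the overall error bound additively rather than being entangled with the bias/variance balancing that fixes $\delta$. Both facts are already implicit in the path-following analysis, so no estimate beyond Theorem~\ref{theorem_main2} and Assumption~\ref{RSC} is needed; this is why the statement is a corollary rather than a theorem.
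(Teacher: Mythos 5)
Your proposal is correct, and its foundational ingredients (Proposition \ref{prop_key} supplying sparsity, uniqueness, feasibility and exact stationarity of $\widehat{\btheta}_{tgt}$) are exactly what the paper relies on; but your main line of argument is more roundabout than the paper's. The paper simply notes that $\widehat{\btheta}_{tgt}$ itself satisfies every hypothesis of Lemma \ref{lemma_initialization} with $\lambda=\lambda_{tgt}$ --- it lies in $\Omega$, obeys $\norm{(\widehat{\btheta}_{tgt})_{S^{*c}}}_0\le\tilde s$, and has $\omega_{\lambda_{tgt}}(\widehat{\btheta}_{tgt})\le 0\le\tfrac12\lambda_{tgt}$ --- so the deterministic $\ell_1$/$\ell_2$ bounds of that lemma apply to it directly, and the stated rates follow from the prescribed choices of $\delta$ and $\lambda_{tgt}$. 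This is precisely the alternative you relegate to your closing parenthesis. Your primary route instead compares $\widehat{\btheta}_{tgt}$ with the algorithmic output $\tilde{\btheta}_{tgt}$: summing the $\epsilon_{tgt}$-approximate stationarity of $\tilde{\btheta}_{tgt}$ (tested at $\btheta'=\widehat{\btheta}_{tgt}\in\Omega$) with the exact stationarity of $\widehat{\btheta}_{tgt}$, invoking monotonicity of $\partial\norm{\cdot}_1$ and restricted strong convexity over $2(s+\tilde s)$-sparse differences, you get $\norm{\tilde{\btheta}_{tgt}-\widehat{\btheta}_{tgt}}_2\lesssim\sqrt{s}\,\epsilon_{tgt}/\rho_-$ and then let $\epsilon_{tgt}\downarrow 0$. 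This is valid --- the limit is legitimate because $\widehat{\btheta}_{tgt}$ and the statistical bounds of Theorem \ref{theorem_main2} do not depend on $\epsilon_{tgt}$ --- and it buys something the paper does not state explicitly, namely a quantitative optimization-error bound between the approximate and exact solutions; but for the corollary as stated it is strictly more work than the one-line application of Lemma \ref{lemma_initialization}.
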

\begin{proof}
	See Section \ref{proof_coro} for the  detailed proof.
\end{proof}

\subsection{Minimax rate of convergence}\label{sec32}

In this section, we will prove that our estimator is minimax rate optimal up to a logarithmic factor of $s$. Formally, we define the following parameter space
\[ \Theta_{\beta,s} =   \{(\btheta^*,P) : \btheta^* \in B_0(s) , P \in \cP(\beta,L), \btheta^* = \argmin_{\btheta}R_{P}(\btheta),
 \lambda_{\min}(\nabla^2R_P(\btheta^*)) \geq \rho
\},\]
where $B_0(s) = \{x\in \RR^d: \norm{x}_0 \leq s\}$, $\cP(\beta,L)$ is defined in Definition \ref{def_holder_new}, $R_{P}(\btheta)$ is the risk function evaluated at the joint distribution  $P$ in (\ref{MCID_linear2}), $\lambda_{\min}(\cdot)$ denotes the minimum eigenvalue and $\rho > 0$ is some positive constant.
Here for parameter set $\Theta_{\beta,s}$, we require that the true minimizer $\btheta^*$ is $s$-sparse and the probability measure $P$ satisfies the $\beta$-smoothness condition in Assumption \ref{assum_density}.
In addition, we require that the minimum eigenvalue of the Hessian of the risk function is lower bounded, which is the population version of the RSC condition in Assumption \ref{RSC}.
We define the minimax risk for estimating $\btheta^*$ in $\Theta_{\beta,s}$ as
\[
\inf_{\hat{\btheta}} \sup_{(\btheta,P) \in \Theta_{\beta,s}}\EE_{P}\norm{\hat{\btheta} - \btheta}_q,
\]
for $q=1,2$, where the expectation is taken under $P$.

In order to find a lower bound for the minimax risk, we follow the reduction scheme in \citet{tsybakovintroduction} to reduce minimax estimation to a multiple hypothesis testing problem. We then construct a set of hypotheses by carefully varying the conditional density $f(X|\bZ, Y)$ in $\cP(\beta,L)$ rather than directly varying $\btheta^*$ in $B_0(s)$ as in the lower bound for linear regression \citep{raskutti2011minimax,bellec2018slope}. Since our goal is to estimate $\btheta^*$ instead of the conditional density $f(X|\bZ, Y)$, we need to show that, under the constructed $f(X|\bZ, Y)$ in all hypotheses, the minimizers of the risk function $R_{P}(\btheta)$ exist, are unique, $s$-sparse and well separated in $\ell_q$ norm in order to get a sharp lower bound.
The main result in this section is as follows.

\begin{theorem}\label{theorem_minimax}
	Let $d \geq 2$ and the smoothness parameter $\beta > 1$.
	If $n \geq C\log (d/s)s ^{(4\beta - 1)/(2\beta-2)}$ for some constant $C$ large enough, we have
	\[
	\inf_{\hat{\btheta}} \sup_{(\btheta,P) \in \Theta_{\beta,s}}\PP_{P}\bigg[
	\norm{\hat{\btheta} - \btheta}_q \geq cs^{\frac{1}{q}-\frac{1}{2}}\bigg(\frac{s\log (d/s)}{n}\bigg)^{\beta/(2\beta+1)}
	\bigg] > c',
	\]
	for $q=1,2$, where $c,c'$ are constants independent of $n,d$ and $s$.
\end{theorem}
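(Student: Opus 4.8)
The plan is to use the reduction of minimax estimation to multiple testing via Fano's method (see, e.g., Chapter~2 of \citet{tsybakovintroduction}); the real content is to build a large family $\{P_a\}_{a=1}^{M}\subseteq\Theta_{\beta,s}$ whose induced minimizers $\btheta^*_a=\argmin_\btheta R_{P_a}(\btheta)$ are pairwise separated by the target $\ell_q$-distance while the divergences $\mathrm{KL}(P_a\|P_b)$ stay small. The key device is to keep the marginal law of $(\bZ,Y)$ fixed and to encode the hypotheses entirely through a localized perturbation of the conditional density $f(x\mid y,\bz)$ near the common baseline threshold, rather than perturbing $\btheta^*$ directly.

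Concretely, fix a baseline $P_{0}$: let $Y$ be uniform on $\{-1,1\}$; let $\bZ$ have i.i.d., symmetric, mean-zero, bounded coordinates, independent of $Y$; and let $X\mid Y=y$ have a smooth density $f_{0}(\cdot\mid y)$, bounded away from $0$ near the origin and chosen (e.g., as a location shift of a fixed symmetric density) so that $\nabla R_{P_{0}}(\mathbf 0)=\mathbf 0$, $\nabla^2 R_{P_{0}}(\mathbf 0)$ is diagonal with eigenvalues bounded away from $0$ and $\infty$, and $R_{P_{0}}$ has $\mathbf 0$ as its unique minimizer; then $\btheta^*_{0}=\mathbf 0$ and $P_{0}\in\Theta_{\beta,s}$. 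By a standard combinatorial packing, choose $s$-subsets $S_1,\dots,S_{M}\subseteq\{1,\dots,d\}$ with $|S_a|=s$, $|S_a\triangle S_b|\ge s/2$, and $\log M\gtrsim s\log(d/s)$. Fix a compactly supported $C^\infty$ bump $\psi$ with $\int\psi=0$ and $\psi(0)\ne 0$, a bandwidth $h>0$, and a small constant $\kappa>0$, and set
\[
f_a(x\mid y,\bz)\;=\;f_{0}(x\mid y)\;+\;\kappa\,y\,\frac{h^{\beta}}{\sqrt s}\,\psi(x/h)\sum_{k\in S_a}z_k .
\]
Since $\int\psi=0$ this is a conditional density; since the scale-$h$ bump $h^{\beta}\psi(\cdot/h)$ has a $\lfloor\beta\rfloor$-th derivative with $(\beta-\lfloor\beta\rfloor)$-Hölder seminorm of order $1$, and the averaged-Hölder functional of Definition~\ref{def_holder_new} contracts a $Cs$-sparse $\bv$ against only the $s$ coordinates of $S_a$ (producing a factor $\sqrt s$ cancelled by the $1/\sqrt s$), for $\kappa$ small one gets $P_a\in\cP(\beta,L)$; and $f_a\ge0$ whenever $\sqrt s\,h^{\beta}$ is small.

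Next I would pin down $\btheta^*_a$ and verify $P_a\in\Theta_{\beta,s}$. Writing $\nabla R_{P_a}=\nabla R_{P_{0}}+G_a$ and using $w(y)=1/\PP(Y=y)$, independence and mean-zero-ness of the coordinates of $\bZ$ collapse $G_a(\mathbf 0)$ to a vector supported on $S_a$ with entries $\asymp\kappa\,h^{\beta}/\sqrt s$; restricted to the coordinates of $S_a$ the stationarity equation is a contraction in a small ball around $\mathbf 0$, and its left side vanishes identically off $S_a$, so it has a unique solution $\btheta^*_a$ supported exactly on $S_a$ with $(\btheta^*_a)_k\asymp h^{\beta}/\sqrt s$; a soft perturbation argument (using the quadratic growth of $R_{P_{0}}$ near $\mathbf 0$ together with the uniform smallness of $\sup_\btheta|R_{P_a}-R_{P_{0}}|$) shows this is in fact the \emph{global} minimizer. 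Hence $\btheta^*_a$ is $s$-sparse, and, since the entries on $S_a\cap S_b$ agree to leading order, $\norm{\btheta^*_a-\btheta^*_b}_q\gtrsim\big(|S_a\triangle S_b|\big)^{1/q}h^{\beta}/\sqrt s\gtrsim s^{1/q-1/2}h^{\beta}$ for $q\in\{1,2\}$. Differentiating the bump once more, $\nabla^2 R_{P_a}(\btheta^*_a)$ differs from $\nabla^2 R_{P_{0}}(\btheta^*_a)$ by a matrix of operator norm $O(s\,h^{2(\beta-1)})$ — the would-be leading $O(\sqrt s\,h^{\beta-1})$ contribution cancelling thanks to symmetry of the coordinates — which is $o(1)$ precisely when $n\gtrsim\log(d/s)\,s^{(4\beta-1)/(2\beta-2)}$, i.e.\ exactly under the sample-size hypothesis; so $\lambda_{\min}(\nabla^2 R_{P_a}(\btheta^*_a))\ge\rho$ and $P_a\in\Theta_{\beta,s}$. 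Finally, $f_a-f_{0}$ has squared $L^2$-mass of order $s\,(h^{\beta}/\sqrt s)^2h=h^{2\beta+1}$ and the densities are bounded below, so $\mathrm{KL}(P_a^{\otimes n}\|P_b^{\otimes n})=n\,\mathrm{KL}(P_a\|P_b)\lesssim n\,h^{2\beta+1}$; taking $h\asymp(s\log(d/s)/n)^{1/(2\beta+1)}$ with a sufficiently small constant makes this below a suitably small fraction of $\log M$, and Fano's lemma delivers the bound with separation $s^{1/q-1/2}h^{\beta}\asymp s^{1/q-1/2}(s\log(d/s)/n)^{\beta/(2\beta+1)}$.

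The step I expect to be the main obstacle is the rigorous verification that each $P_a$ genuinely lies in $\Theta_{\beta,s}$ — that $R_{P_a}$ has a \emph{unique} global minimizer which is exactly $s$-sparse and at which the population Hessian stays uniformly positive definite. This forces one to control the bump's contribution to the gradient and Hessian uniformly over a neighborhood of $\mathbf 0$ (not only at $\mathbf 0$), and is exactly what produces the condition $n\ge C\log(d/s)\,s^{(4\beta-1)/(2\beta-2)}$ and the restriction $\beta>1$; checking the non-uniform-Hölder-plus-sparse-eigenvalue form of Definition~\ref{def_holder_new} for $f_a$, and ensuring the implicit-function corrections to $\btheta^*_a$ on $S_a\cap S_b$ are of lower order than $h^{\beta}/\sqrt s$, are the remaining technical points. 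The combinatorial packing, the $\mathrm{KL}$ estimate, and the concluding invocation of Fano are routine.
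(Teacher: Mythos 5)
Your plan follows the same reduction as the paper: Varshamov--Gilbert packing of $s$-sparse supports, hypotheses encoded through a localized $O(h^{\beta})$ perturbation of the conditional density $f(x\mid y,\bz)$ scaled by $\bomega^T\bz/\sqrt{s}$, the KL bound $n h^{2\beta+1}$, the bandwidth choice $h\asymp(s\log(d/s)/n)^{1/(2\beta+1)}$, and Fano/Tsybakov Theorem~2.7. The Hölder verification, the KL computation, the separation scaling $s^{1/q-1/2}h^{\beta}$, and the role of the condition $n\gtrsim s^{(4\beta-1)/(2\beta-2)}\log(d/s)$ (equivalently, boundedness of $\sqrt{s}\,h^{\beta-1}$) all match the paper's. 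The substantive difference is where you place the bump: you fix it at the origin and recover $\btheta^*_a$ by an implicit-function/contraction argument, whereas the paper centers the bump at a $\bz$-dependent location $x_{j,\bz}$ that is \emph{linear} in $\bomega_j^T\bz$, chosen so that $f_j(x\mid 1,\bz)$ and $f_j(x\mid -1,\bz)$ cross exactly at $x=\btheta_j^T\bz$ with the correct monotonicity. That choice makes $\btheta_j$ available in closed form ($\btheta_j\propto h^{\beta}\bomega_j/\sqrt{s}$ exactly), and global minimality plus uniqueness follow from the pointwise identity $\sgn(\EE_j[Y\mid x,\bz])=\sgn(x-\btheta_j^T\bz)$ together with $R_j(\btheta)-R_j(\btheta_j)=4\int_{\cG}|\EE_j[Y\mid x,\bz]|\,dP\ge 0$.

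The genuine gap in your version is exactly the step you flag: establishing that the stationary point produced by the contraction is the \emph{unique global} minimizer of $R_{P_a}$ over all of $\RR^d$. Your "soft perturbation argument" only localizes the minimizer: $\sup_{\btheta}|R_{P_a}(\btheta)-R_{P_0}(\btheta)|\lesssim h^{\beta+1}$ plus quadratic growth of $R_{P_0}$ confines any global minimizer to a ball of radius $\asymp h^{(\beta+1)/2}$ around $\mathbf 0$, but it does not give uniqueness or exact $S_a$-support within that ball. The natural route --- uniform positive definiteness of $\nabla^2 R_{P_a}$ on the ball --- fails for your construction in non-sparse directions: the Hessian perturbation $2\kappa h^{\beta-1}s^{-1/2}\,\EE[\bZ\bZ^T\psi'(\btheta^T\bZ/h)\sum_{k\in S_a}Z_k]$ admits the Taylor cancellation you invoke only when $|\btheta^T\bz|/h$ is uniformly small, which holds at the $s$-sparse point $\btheta^*_a$ (where $|\btheta_a^{*T}\bz|\lesssim\sqrt{s}h^{\beta}$) but not for arbitrary $\btheta$ with $\norm{\btheta}_2\lesssim h^{(\beta+1)/2}$; the crude entrywise bound then gives an operator norm that can scale with $d$. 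Since membership of $(\btheta_a^*,P_a)$ in $\Theta_{\beta,s}$ requires the argmin over all of $\RR^d$ to be unique and exactly $s$-sparse, and the separation bound relies on the exact zero pattern off $S_a$, this step cannot be waved away. The missing idea is the paper's $\bz$-dependent centering of the bump, which turns the linear threshold $\btheta_j^T\bz$ into the exact Bayes boundary and reduces global minimality to a one-line sign argument; without it (or some substitute, e.g., a construction under which zeroing the off-support coordinates provably never increases the risk), your argument is incomplete at its most critical point.
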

\begin{proof}
	See Section \ref{proof_minimax} for the  detailed proof.
\end{proof}

This theorem implies that for $\beta > 1$ our estimator is nearly minimax rate optimal in terms of $\ell_1$ and $\ell_2$ estimation errors. We note that our result does not cover the case $\beta\leq 1$, because we need $\beta>1$ for the differentiablity of the constructed conditional density in the proof.

\subsection{Comparison with related works}\label{banergee}

Under the fixed $d$ and diverging $n$ setting, the linear binary response model in (\ref{eqbinary}) has been extensively studied. For instance, \cite{kim1990cube} derived the cubic root rate for the maximum score estimator and \cite{horowitz1992smoothed} showed that the smoothed maximum score estimator has the rate $n^{-\frac{h}{2h+1}}$ under the assumption that the conditional density of $\epsilon$ given $\bW$ lies in the uniform H\"older class $\Sigma(h,L')$ for some constant $L'$.  Within the context of this binary response model, our paper is related to theirs in the sense that our estimator can be treated as an $\ell_1$ penalized smoothed maximum score estimator with normalization $\theta_1 = 1$. In the fixed dimensional setting, the proof of Theorem \ref{theorem_main2} implies that the rate of our estimator reduces to $(\log n/n)^{1/3}$ for $\beta=1$ and $(\log n/n)^{\frac{\beta}{2\beta+1}}$ for general $\beta>0$, which is comparable with the aforementioned results up to a $\log n$ term due to the regularization.

Very recently, \citet{ban2019} studied Manski's maximum score estimator in high-dimensional setting, which is closely related to this paper. In the notation of Example \ref{exp_4},
they proposed to maximize the empirical score function subject to an $\ell_0$ constraint
\[
\hat{\btheta}_{MS} = \argmax_{\substack{\btheta: \norm{\btheta}_2 = 1,\norm{\btheta}_0 \leq s_0}}\frac 1n \sum_{i = 1}^n y_i\sign{\btheta^T\bw_i},
\]
where $s_0$ is a pre-specified upper bound on the true sparsity. With simple algebra we can show that the above is equivalent to minimizing the empirical risk w.r.t. 0-1 loss function. They showed that when $p \gg n$, under some regularity conditions such as the soft margin condition \citep{mammen1999smooth}: $\PP(|\eta(\bW) - \frac12| \leq t) \leq Ct^\alpha~~\forall~0\leq t \leq t^*\leq 1/2,$ where $\eta(\bw) = \PP(Y = 1|\bW = \bw)$, and $\alpha \geq 1$ is some smoothness parameter, their estimator achieves the $\ell_2$ rate $\big(
\frac{s_0\log(d/s_0)\log n}{n}
\big)^{\frac{\alpha}{\alpha + 2}}.$

Compared to \citet{ban2019}, our theoretic results differ in many aspects. First, instead of based on the soft margin condition, our result requires the H\"older class smoothness condition for the conditional density.
Consequently, the characterization of the convergence rate of our estimator is intrinsically different from theirs. To be specific, let us consider the binary response model. Under mild conditions on the noise $\epsilon$ in (\ref{eqbinary}), the soft margin condition holds for $\alpha=1$, which leads to the cubic root rate by \citet{ban2019}.
On the contrary,
in this case if the conditional density $f(X|\bZ,Y)$ is indeed very smooth, Theorem \ref{theorem_main2} implies that the rate of our estimator can be much faster and even close to root-$n$; we refer to Section \ref{sup_ban} in the supplementary material for a concrete example.
On the other hand, based on the soft margin condition, their estimator may achieve a rate faster than the canonical root-$n$ rate if $\alpha>2$, whereas our estimator is always slower than the root-$n$ rate.


Second, for the lower bound, we show that the $\ell_2$ minimax rate is $\big(\frac{s\log(d/s)}{n}\big)^{\beta/(2\beta+1)}$ for $P\in\cP(\beta,L)$ with $\beta > 1$, whereas they showed that the minimax rate is $\big(\frac{s\log(d/s)}{n}\big)^{1/3}$ under the soft margin condition with $\alpha = 1$. Due to different characterization of parameter spaces in these two works, the way of constructing the alternative hypotheses in the proof of the lower bound is completely different. Since our lower bound result focuses on the more smooth case $\beta > 1$ (with faster rate), these two results indeed complement to each other.


Third, from the computational perspective, computing their estimator $\hat{\btheta}_{MS}$ is NP-hard due to discontinuity of the maximum score function and nonconvexity of the constraint set $\|\btheta\|_0\leq s_0$. In terms of implementation, they proposed to first apply penalized Adaboost to perform variable selection and then solve the smoothed maximum score on the selected covariates. Unlike their approach, the analysis of statistical performance of our estimator is embedded in the analysis of the path-following algorithm. As shown in Theorem \ref{theorem_main2}, our estimator has both statistical and computational guarantees.

\section{Adaptive estimation}\label{adaptive}

To achieve the optimal convergence rate in Section \ref{sec_theory}, it requires the knowledge of the smoothness parameter $\beta$ as well as the sparsity $s$ to choose appropriate $\delta \asymp
\big(\frac{s \log d  }{n}\big)^{1/(2\beta+1)}
$ and $\lambda_{tgt} \asymp\sqrt{\frac{\log d}{n\delta}}$ in the penalized smoothed empirical risk function.  However, in practice these two parameters are generally unknown.
Adaptive estimation for unknown smoothness in nonparametric kernel estimation and
 unknown sparsity in high-dimensional statistics has been studied by  \cite{lepskii1991problem,lepskii1993asymptotically,gine2010confidence,cai2014adaptive,su2016slope,bellec2018slope}, among others.
This section aims to develop adaptive estimation procedures for $\btheta^*$. 
Specifically, we show that when either $\beta$ or $s$ is unknown, by applying the Lepski's method \citep{lepskii1991problem,lepskii1993asymptotically,birge2001alternative}, the $\ell_2$ estimation error of the proposed method achieves the optimal rate
$\big(\frac{s \log d  }{n}\big)^{\beta/(2\beta+1)}$ up to a logarithmic factor.

\subsection{Adaptation for unknown \texorpdfstring{$\beta$}{Lg}}

Suppose the sparsity level $s$ is known or chosen appropriately with domain expertise. We focus on the adaption for the unknown smoothness $\beta$. For purpose of presentation, we denote $\tilde{\btheta}_{\delta,\lambda_{\delta}}$ by the approximate solution to the minimization problem (\ref{eq_penalized_obj}) with bandwidth parameter $\delta$ and tuning parameter $\lambda_\delta = C\sqrt{\frac{ \log d}{n\delta}}$. Consider the discrete set $\cD = \{1,\frac{1}{2},\dotso, \frac{1}{2^m} \}$ such that $\frac{1}{2^m} \leq \frac1n \leq \frac{1}{2^{m-1}}.$ Lepski's method seeks to choose the largest bandwidth in $\cD$ such that the bias is still dominated by the variance. Formally, the Lepski's method selects
\beq\label{leiski_1}
\hat{\delta} = \max_{\delta \in \cD } \bigg \{
\norm{\tilde{\btheta}_{\delta,\lambda_{\delta}} - \tilde{\btheta}_{\delta',\lambda_{\delta'}}}_2 \leq c \sqrt{\frac{s \log d}{n\delta'}}~~\forall \delta' \leq \delta, \delta' \in \cD
\bigg  \},
\eeq
for some constant $c > 0$. If the set to be maximized over is empty, we would choose $\hat{\delta} = 1/n$ by default. The following theorem shows the statistical performance of the estimator $\tilde{\btheta}_{\hat{\delta},\lambda_{\hat{\delta}}}$.

\begin{theorem}\label{theo_adapt_1}
	Suppose for each $\delta \in \cD$, $\tilde{\btheta}_{\delta,\lambda_{\delta}}$ is the approximate final estimator from the path-following algorithm with $\lambda_{\delta}$, a proper kernel function with order $l > \beta$ satisfying $\int |t^{\beta}||K(t)|dt <  \infty$ and the same parameters $\eta,\nu,\phi$ chosen in Theorem \ref{main_result1}. If Assumption \ref{assum_propensity} - \ref{RSC} hold, by choosing $\hat{\delta}$ following (\ref{leiski_1}),  and a large enough constant $C$ for each $\lambda_{\delta} = C \sqrt{\frac{\log d}{n\delta}}$, then with probability greater than $1 - 2\log_{2}(n)/d$ it holds that
	\[
	\norm{\tilde{\btheta}_{\hat{\delta},\lambda_{\hat{\delta}}} - \btheta^*}_2 \lesssim \bigg(\frac{s\log d  }{n}\bigg)^{\beta/(2\beta+1)}.
	\]
\end{theorem}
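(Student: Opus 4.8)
The plan is to carry out the classical Lepski selection argument, built on the bias--variance decomposition that underlies Theorem~\ref{theorem_main2}. For a fixed bandwidth $\delta$, the proof of Theorem~\ref{theorem_main2} \emph{before} optimizing over $\delta$ yields, on an event of probability at least $1-2d^{-1}$, the bound
\[
\norm{\tilde{\btheta}_{\delta,\lambda_\delta}-\btheta^*}_2\ \le\ C\Big(\delta^{\beta}+\sqrt{\tfrac{s\log d}{n\delta}}\Big),
\]
where the first term is the smoothing bias controlled by Proposition~\ref{prop_bias} (which applies with the fixed over-smoothing kernel of order $l>\beta$, since such a kernel is in particular of order $\lfloor\beta\rfloor$ and $\int|t^{\beta}||K(t)|dt<\infty$), the second is the Lasso-type estimation error towards the minimizer of $R_\delta$ controlled via Proposition~\ref{prop_var} and Assumption~\ref{RSC}, and $C$ does not depend on $\delta$. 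First I would take a union bound over the $\asymp\log_2 n$ bandwidths $\delta\in\cD$ to produce a single event $\cE$, with $\PP(\cE)\ge 1-2\log_2(n)/d$, on which this bound holds for every $\delta\in\cD$ simultaneously.

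Next I would introduce the oracle bandwidth $\delta^\dagger$, defined as the largest element of $\cD$ with $\delta^\dagger\le(s\log d/n)^{1/(2\beta+1)}$. Since $1/(2\beta+1)\in(0,1)$ and $d\ge 2$, one has $1/n\ll(s\log d/n)^{1/(2\beta+1)}\le 1$, so $\delta^\dagger\in\cD$, and the dyadic spacing gives $(s\log d/n)^{1/(2\beta+1)}/2\le\delta^\dagger\le(s\log d/n)^{1/(2\beta+1)}$. This choice equalizes, up to constants, the two terms of the bound at $\delta^\dagger$: $(\delta^\dagger)^\beta\le(s\log d/n)^{\beta/(2\beta+1)}$, $\sqrt{s\log d/(n\delta^\dagger)}\le\sqrt2\,(s\log d/n)^{\beta/(2\beta+1)}$, and in particular $(\delta^\dagger)^\beta\le\sqrt{s\log d/(n\delta^\dagger)}$. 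Consequently, on $\cE$, $\norm{\tilde{\btheta}_{\delta^\dagger,\lambda_{\delta^\dagger}}-\btheta^*}_2\lesssim(s\log d/n)^{\beta/(2\beta+1)}$.

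The core step is to show that on $\cE$ the bandwidth $\delta^\dagger$ satisfies the Lepski constraint in (\ref{leiski_1}), so that the maximization there is over a nonempty set (the default $\hat\delta=1/n$ never triggers) and $\hat\delta\ge\delta^\dagger$. For any $\delta'\in\cD$ with $\delta'\le\delta^\dagger$, the triangle inequality and the uniform bound give
\[
\norm{\tilde{\btheta}_{\delta^\dagger,\lambda_{\delta^\dagger}}-\tilde{\btheta}_{\delta',\lambda_{\delta'}}}_2\ \le\ C\Big((\delta^\dagger)^\beta+(\delta')^\beta+\sqrt{\tfrac{s\log d}{n\delta^\dagger}}+\sqrt{\tfrac{s\log d}{n\delta'}}\Big),
\]
and since $\delta'\le\delta^\dagger$ each of the four terms is at most $\sqrt{s\log d/(n\delta')}$ (using $(\delta')^\beta\le(\delta^\dagger)^\beta\le\sqrt{s\log d/(n\delta^\dagger)}\le\sqrt{s\log d/(n\delta')}$ together with monotonicity of the variance term in $\delta$); hence the right-hand side is at most $4C\sqrt{s\log d/(n\delta')}$, matching (\ref{leiski_1}) with $c=4C$. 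Thus $\hat\delta\ge\delta^\dagger$, and since $\hat\delta\in\cD$ itself satisfies the Lepski constraint, applying that constraint to the admissible pair $(\hat\delta,\delta^\dagger)$ gives $\norm{\tilde{\btheta}_{\hat\delta,\lambda_{\hat\delta}}-\tilde{\btheta}_{\delta^\dagger,\lambda_{\delta^\dagger}}}_2\le c\sqrt{s\log d/(n\delta^\dagger)}\lesssim(s\log d/n)^{\beta/(2\beta+1)}$. One final triangle inequality against the error bound at $\delta^\dagger$ from the previous paragraph then yields $\norm{\tilde{\btheta}_{\hat\delta,\lambda_{\hat\delta}}-\btheta^*}_2\lesssim(s\log d/n)^{\beta/(2\beta+1)}$, which is the claim.

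I expect the main obstacle to be the first step: establishing the non-asymptotic bound $\norm{\tilde\btheta_{\delta,\lambda_\delta}-\btheta^*}_2\lesssim\delta^\beta+\sqrt{s\log d/(n\delta)}$ \emph{uniformly} over all $\delta\in\cD$, including the smallest scales $\delta\asymp 1/n$, since the Lepski test for $\delta^\dagger$ compares it against every smaller bandwidth. Concretely this amounts to re-running the proof of Theorem~\ref{theorem_main2} with $\delta$ kept generic and checking that Proposition~\ref{prop_var}, Proposition~\ref{prop_bias}, Assumption~\ref{RSC}, and the requirement $M_n\le C\sqrt{n\delta/\log d}$ of Assumption~\ref{assum_bounded} remain in force for each $\delta\in\cD$, and that a single fixed kernel of order $l>\beta$ delivers the $\delta^\beta$ bias rate without knowledge of $\beta$. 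A secondary, routine point is verifying that the dyadic grid is fine enough that $\delta^\dagger$ lies within a constant factor of the exact bias--variance balance point, which is what makes the resulting rate coincide, up to constants, with the oracle rate of Theorem~\ref{theorem_main2}.
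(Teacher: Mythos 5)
Your proposal is correct and follows essentially the same route as the paper's proof: a union bound over the dyadic grid to get the $1-2\log_2(n)/d$ event, an oracle bandwidth defined (up to constants) by the bias--variance balance $\delta^\beta\asymp\sqrt{s\log d/(n\delta)}$, a triangle-inequality verification that the oracle passes the Lepski test so that $\hat\delta$ is at least the oracle, and a final triangle inequality against the oracle's error bound (the paper obtains the per-$\delta$ bound $\delta^\beta\vee\sqrt{s}\lambda_\delta$ from Lemma \ref{lemma_initialization} applied to the approximate solution, which is exactly your ``Theorem \ref{theorem_main2} before optimizing over $\delta$'' bound). The uniformity concern you flag for small $\delta\asymp 1/n$ is legitimate but is not addressed in the paper's proof either, which simply invokes the assumptions for every $\delta\in\cD$.
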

\begin{proof}
	See Section \ref{proof_adaptive1} for the  detailed proof.
\end{proof}

Theorem \ref{theo_adapt_1} suggests that with the data-driven bandwidth $\hat{\delta}$ the resulting estimator achieves the optimal rate $\big(
\frac{s \log d}{n}
\big)^{\beta/(2\beta+1)}$ up to a logarithmic factor. In this theorem, we assume that the order $l$ of the kernel function $K(\cdot)$ is chosen such that $l > \beta$. In practice, we can always choose a higher order kernel to meet this condition. Finally, we note that the same adaptation theory holds for the exact local solution $\hat\btheta_{tgt}$ provided the estimator $\tilde\btheta_{tgt}$ used in (\ref{leiski_1}) is replaced by $\hat\btheta_{tgt}$.

\subsection{Adaptation for unknown \texorpdfstring{$s$}{Lg}}
When we have information about the smoothness parameter $\beta$, a similar Lepski's procedure can be applied to adapt to the unknown sparsity $s$. Again, for purpose of presentation, we denote $\tilde{\btheta}_{\delta_s,\lambda_s}$ by the approximate solution to the minimization problem (\ref{eq_penalized_obj}) with bandwidth parameter $\delta_s = c \big(
\frac{s \log d}{n}\big)^{1/(2\beta+1)}$ and tuning parameter $\lambda_s = C\sqrt{\frac{\log d}{n\delta_s}}$. Consider $\cD' = \{1,2,\dotso,2^m \}$
such that $2^{m} \leq d \leq 2^{m+1}.$ We define the adaptive estimator $\hat{s}$ as

\beq\label{leiski_2}
\hat{s} = \min_{s \in \cD' } \bigg \{
\norm{\tilde{\btheta}_{\delta_{s},\lambda_{s}} - \tilde{\btheta}_{\delta_{s'},\lambda_{s'}}}_2 \leq \bar{c} \bigg(
\frac{s' \log d}{n}
\bigg)^{\beta/(2\beta+1)}~~\forall s' \geq s, s' \in \cD'
\bigg  \},
\eeq
where $\lambda_{s} = C\sqrt{\frac{\log d}{n\delta_{s}}}$ for some constant $C$ and $\delta_s = c \big(
\frac{s \log d}{n}
\big)^{1/(2\beta+1)}$ with $c^{\beta+0.5} \leq C$. Again if the set to be minimized over is empty, we choose $\hat s = 2^m$.

\begin{theorem}\label{theo_adapt_2}
	Suppose for each $s \in \cD'$, $\tilde{\btheta}_{\delta_{s},\lambda_{s}}$ is the approximate final estimator from the path-following algorithm with $\lambda_{s}$ and the same parameters $\eta,\nu,\phi$ chosen in Theorem \ref{main_result1}. If Assumption \ref{assum_propensity} - \ref{RSC} hold,   by choosing $\hat{s}$ following (\ref{leiski_2}), a proper kernel function with order $\floor{\beta}$ satisfying $\int |t^{\beta}||K(t)|dt <  \infty$ and a large enough $C$ for $\lambda_{s}$ in (\ref{leiski_2}), then with probability greater than $1 - 2\log_{2}(d)/d$ it holds that
	\[
	\norm{\tilde{\btheta}_{\delta_{\hat{s}},\lambda_{\hat{s}}} - \btheta^*}_2 \lesssim \bigg(\frac{s\log d  }{n}\bigg)^{\beta/(2\beta+1)}.
	\]
\end{theorem}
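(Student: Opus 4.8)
The plan is to run the standard Lepski-type oracle argument, now aimed at adapting to the unknown sparsity $s$ rather than the unknown smoothness. Write $s$ for the true sparsity and put $s^{\dagger} = \min\{s''\in\cD': s''\geq s\}$, the smallest grid point dominating $s$, so that $s\leq s^{\dagger}\leq 2s$. First I would show that, on a single high-probability event, every candidate estimator indexed by $s'\in\cD'$ with $s'\geq s$ already attains the rate $\big(\tfrac{s'\log d}{n}\big)^{\beta/(2\beta+1)}$ in $\ell_2$ towards $\btheta^*$. Next I would check that on this event $s^{\dagger}$ lies in the set over which the minimum in (\ref{leiski_2}) is taken, so that $\hat s\leq s^{\dagger}$. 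Finally I would plug the pair $(\hat s,s^{\dagger})$ into the defining inequality of (\ref{leiski_2}) and combine it with the rate at $s^{\dagger}$ and with $s^{\dagger}\leq 2s$ to conclude.

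For the first step I would fix $s'\in\cD'$ with $s'\geq s$ and reuse the bias/variance balance underlying the proof of Theorem \ref{theorem_main2} (exactly as in the proof of Theorem \ref{theo_adapt_1}), now with bandwidth $\delta_{s'}=c\big(\tfrac{s'\log d}{n}\big)^{1/(2\beta+1)}$ and $\lambda_{s'}=C\sqrt{\tfrac{\log d}{n\delta_{s'}}}$; the hypothesis $c^{\beta+1/2}\leq C$ makes $\lambda_{s'}$ dominate $8C_1\sqrt{\tfrac{\log d}{n\delta_{s'}}}$, so it is an admissible target tuning parameter. The error then splits into the approximation bias $\delta_{s'}^{\beta}=c^{\beta}\big(\tfrac{s'\log d}{n}\big)^{\beta/(2\beta+1)}$ from Proposition \ref{prop_bias}, and the term $\sqrt{s}\,\lambda_{s'}\asymp\sqrt{\tfrac{s\log d}{n\delta_{s'}}}=c^{-1/2}(s/s')^{1/2}\big(\tfrac{s'\log d}{n}\big)^{\beta/(2\beta+1)}$ governing the distance of $\tilde{\btheta}_{\delta_{s'},\lambda_{s'}}$ to the minimizer of $R_{\delta_{s'}}$, via Proposition \ref{prop_var} and the restricted strong convexity in Assumption \ref{RSC}; since $s'\geq s$, both pieces are $\lesssim\big(\tfrac{s'\log d}{n}\big)^{\beta/(2\beta+1)}$. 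As $\cD'$ is dyadic there are at most $\log_2 d$ such $s'$, so a union bound over the corresponding instances of Theorem \ref{theorem_main2}, each holding with probability at least $1-2d^{-1}$, delivers the event of probability at least $1-2\log_2(d)/d$; this event also makes the set in (\ref{leiski_2}) nonempty, so the default $\hat s=2^m$ is not triggered.

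For the second step, work on that event. For any $s'\in\cD'$ with $s'\geq s^{\dagger}$ (hence $s'\geq s$), the triangle inequality and the first step give
\[
\norm{\tilde{\btheta}_{\delta_{s^{\dagger}},\lambda_{s^{\dagger}}}-\tilde{\btheta}_{\delta_{s'},\lambda_{s'}}}_2
\leq \norm{\tilde{\btheta}_{\delta_{s^{\dagger}},\lambda_{s^{\dagger}}}-\btheta^*}_2+\norm{\btheta^*-\tilde{\btheta}_{\delta_{s'},\lambda_{s'}}}_2
\lesssim \Big(\tfrac{s^{\dagger}\log d}{n}\Big)^{\beta/(2\beta+1)}+\Big(\tfrac{s'\log d}{n}\Big)^{\beta/(2\beta+1)}\leq 2\Big(\tfrac{s'\log d}{n}\Big)^{\beta/(2\beta+1)},
\]
using $s'\geq s^{\dagger}$. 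Choosing $\bar c$ larger than twice the absolute constant absorbed in ``$\lesssim$'' shows $s^{\dagger}$ satisfies the condition in (\ref{leiski_2}), whence $\hat s\leq s^{\dagger}$. For the last step, since $\hat s$ is by construction in that set and $s^{\dagger}\in\cD'$ with $s^{\dagger}\geq\hat s$, the defining property of (\ref{leiski_2}) at $s'=s^{\dagger}$ gives $\norm{\tilde{\btheta}_{\delta_{\hat s},\lambda_{\hat s}}-\tilde{\btheta}_{\delta_{s^{\dagger}},\lambda_{s^{\dagger}}}}_2\leq\bar c\big(\tfrac{s^{\dagger}\log d}{n}\big)^{\beta/(2\beta+1)}$; adding the first-step bound at $s^{\dagger}$ and using $s^{\dagger}\leq 2s$ yields $\norm{\tilde{\btheta}_{\delta_{\hat s},\lambda_{\hat s}}-\btheta^*}_2\lesssim\big(\tfrac{s\log d}{n}\big)^{\beta/(2\beta+1)}$.

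The hard part is the first step. Theorem \ref{theorem_main2} is stated for the oracle bandwidth tied to the true $s$, so one must re-examine its proof to confirm that over-smoothing (taking $\delta_{s'}$ with $s'>s$) only inflates the approximation-bias term $\delta_{s'}^{\beta}$, while the estimation-error term stays proportional to $\sqrt{s}\,\lambda_{s'}$ and \emph{not} $\sqrt{s'}\,\lambda_{s'}$ --- this holds because the restricted-cone analysis behind Assumption \ref{RSC} and Propositions \ref{prop_var}--\ref{prop_bias} only involves the true sparsity of $\btheta^*$, so the over-smoothed bandwidths merely shift the optimal bias/variance trade-off while keeping both pieces below $\big(\tfrac{s'\log d}{n}\big)^{\beta/(2\beta+1)}$. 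One should also check that Assumptions \ref{assum_propensity}--\ref{RSC} remain in force for each $\delta_{s'}$: the bound $M_n\leq C\sqrt{n\delta/\log d}$ in Assumption \ref{assum_bounded} is only easier for the larger $\delta_{s'}$, and Assumption \ref{RSC} is posited directly on $\Omega$. As in Theorem \ref{theo_adapt_1}, the whole argument goes through verbatim with the exact local solution $\hat{\btheta}_{tgt}$ in place of $\tilde{\btheta}_{tgt}$.
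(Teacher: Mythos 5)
Your proposal is correct and follows essentially the same Lepski oracle argument as the paper: a union bound over the dyadic grid to get the event of probability $1-2\log_2(d)/d$, a rate bound for each over-smoothed candidate via Lemma \ref{lemma_initialization} (with the estimation term controlled by $\sqrt{s}\lambda_{s'}$, not $\sqrt{s'}\lambda_{s'}$), membership of the oracle index in the set defining $\hat s$, and a final triangle inequality. The only cosmetic difference is that you take the oracle $s^\dagger$ as the smallest grid point above $s$ (so $s\le s^\dagger\le 2s$) whereas the paper takes $s_*$ as the largest grid point below $s$ (so $s/2<s_*\le s$); both choices work identically here.
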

\begin{proof}
	See Section \ref{proof_adaptive2} for the  detailed proof.
\end{proof}

Similar to Theorem \ref{theo_adapt_1}, when the smoothness parameter $\beta$ is known, Theorem \ref{theo_adapt_2} implies that the bandwidth and regularization parameter can be adaptively chosen by a Lepski's type method to achieve the optimal $\ell_2$ estimation error. Finally, we note that if $s$ and $\beta$ are both unknown, the Lepski's method breaks down due to the lack of a strict ordering when the set (\ref{leiski_1}) or (\ref{leiski_2}) is defined in a multivariate grid. To the best of our knowledge, how to perform adaptive estimation in related settings is still an open problem. We leave it for future investigation.


\section{Proof of main results}\label{proofs}
In this section, we prove the main results in Section \ref{theory}, with the remaining proofs and auxiliary lemmas provided in the appendix.
\subsection{Proofs for the algorithm and upper bound}

The proofs of Theorem \ref{theorem_main2} and Corollary \ref{corollary_main1} are built upon the following Lemmas.

\begin{lemma}\label{lemma_initialization}
	Assume that the conditions of Proposition \ref{prop_bias} and Assumption \ref{RSC} hold. For $\lambda \geq \lambda_{tgt}$, if $\btheta \in \Omega$, $\norm{\btheta_{S^{*c}}}_0 \leq \tilde{s}$, $\omega_\lambda(\btheta) \leq \frac12\lambda,$ and $\norm{\nabla R_\delta^n(\btheta^*) - \nabla R_\delta(\btheta^*)}_\infty \leq \lambda/8$,
	we have
	\beq
	&\norm{\btheta - \btheta^*}_2 \leq \frac{\bar{C}_1}{\rho_{-}}\bigg( \delta^{\beta} \lor \sqrt{s}\lambda \bigg),\\
	&\norm{\btheta - \btheta^*}_1 \leq \frac{\bar{C}_2}{\rho_{-}}\bigg( \frac{\delta^{2\beta}}{\lambda} \lor \sqrt{s}\delta^{\beta} \lor   s\lambda  \bigg),\\
	&f_\lambda(\btheta) - f_\lambda(\btheta^*)\leq \frac{\bar{C}_2}{2\rho_{-}}\bigg( \delta^{2\beta} \lor \sqrt{s}\delta^{\beta}\lambda \lor   s\lambda^2  \bigg),
	\eeq
	where $f_\lambda(\btheta)$ denotes the objective function $R_\delta^n(\btheta) + \lambda\norm{\btheta}_1$ and $\bar{C}_1, \bar{C}_2 > 0$ are constants that depend on $C_2$ in Proposition \ref{prop_bias}.
\end{lemma}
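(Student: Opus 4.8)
The plan is to combine the first-order optimality-type information encoded in the sub-optimality measure $\omega_\lambda(\btheta)\leq\frac12\lambda$ with the restricted strong convexity (RSC) from Assumption \ref{RSC}, while carefully tracking the approximation bias $E_2$ controlled by Proposition \ref{prop_bias}. First I would use the definition of $\omega_\lambda$ in (\ref{equ_subopti}): since $\btheta\in\Omega$ and $\omega_\lambda(\btheta)\leq\frac12\lambda$, there exists $\bxi\in\partial\norm{\btheta}_1$ such that $(\btheta-\btheta')^T(\nabla R_\delta^n(\btheta)+\lambda\bxi)\leq\frac12\lambda\norm{\btheta-\btheta'}_1$ for all $\btheta'\in\Omega$; in particular taking $\btheta'=\btheta^*\in\Omega$ gives a ``basic inequality'' of the form $(\btheta-\btheta^*)^T\nabla R_\delta^n(\btheta)\leq\frac12\lambda\norm{\btheta-\btheta^*}_1+\lambda\bxi^T(\btheta^*-\btheta)$. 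Then I would use convexity of the $\ell_1$ norm, $\bxi^T(\btheta^*-\btheta)\leq\norm{\btheta^*}_1-\norm{\btheta}_1$, and the usual decomposition of $\norm{\btheta}_1$ over the support $S^*$ of $\btheta^*$ and its complement (using $\norm{\btheta_{S^{*c}}}_0\leq\tilde s$, so the ``off-support'' part lives in a set of size at most $\tilde s$).

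Next I would bring in RSC. Applying (\ref{RSC_condi}) with the roles $\btheta'\leftarrow\btheta^*$, $\btheta\leftarrow\btheta$ (both are in $\Omega$ and sparse: $\norm{\btheta}_0\leq s+\tilde s$, $\norm{\btheta^*}_0\leq s$), I get $R_\delta^n(\btheta^*)\geq R_\delta^n(\btheta)+\nabla R_\delta^n(\btheta)^T(\btheta^*-\btheta)+\frac12\rho_-\norm{\btheta-\btheta^*}_2^2$, hence $\nabla R_\delta^n(\btheta)^T(\btheta-\btheta^*)\geq\frac12\rho_-\norm{\btheta-\btheta^*}_2^2+R_\delta^n(\btheta)-R_\delta^n(\btheta^*)$. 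Combining this lower bound with the upper bound from the previous paragraph, and handling the cross term via $|(\btheta-\btheta^*)^T\nabla R_\delta^n(\btheta^*)|$, which I split as $E_1+E_2$ per (\ref{eq_biasvariance}): the $E_1$ part is bounded by $\norm{\nabla R_\delta^n(\btheta^*)-\nabla R_\delta(\btheta^*)}_\infty\norm{\btheta-\btheta^*}_1\leq\frac\lambda8\norm{\btheta-\btheta^*}_1$ by hypothesis, and the $E_2$ part, since $\btheta-\btheta^*$ is $(s+\tilde s)$-sparse $\leq s'$, is bounded by $C_2\delta^\beta\norm{\btheta-\btheta^*}_2$ via Proposition \ref{prop_bias} (noting $\nabla R(\btheta^*)=0$). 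This yields an inequality of the shape $\frac12\rho_-\norm{\btheta-\btheta^*}_2^2\leq (\text{const}\cdot\lambda)\norm{\btheta-\btheta^*}_1+C_2\delta^\beta\norm{\btheta-\btheta^*}_2+(\text{objective gap terms})$, together with a cone-type bound $\norm{(\btheta-\btheta^*)_{S^{*c}}}_1\lesssim\norm{(\btheta-\btheta^*)_{S^*}}_1+\frac{\delta^\beta}{\lambda}\norm{\btheta-\btheta^*}_2+\dots$, so that $\norm{\btheta-\btheta^*}_1\lesssim\sqrt{s+\tilde s}\,\norm{\btheta-\btheta^*}_2+\frac{\delta^\beta}{\lambda}\norm{\btheta-\btheta^*}_2$.

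Finally I would close the loop: substituting the $\ell_1$-to-$\ell_2$ cone bound back into the quadratic inequality gives $\rho_-\norm{\btheta-\btheta^*}_2^2\lesssim(\sqrt s\,\lambda+\delta^\beta)\norm{\btheta-\btheta^*}_2+\frac{\delta^{2\beta}}{\lambda}$ (the last from the off-support bias term squared, or absorbed), and solving this quadratic in $\norm{\btheta-\btheta^*}_2$ produces $\norm{\btheta-\btheta^*}_2\lesssim\frac1{\rho_-}(\delta^\beta\vee\sqrt s\lambda)$, which is the first claim; plugging this into the cone bound gives the $\ell_1$ claim $\norm{\btheta-\btheta^*}_1\lesssim\frac1{\rho_-}(\frac{\delta^{2\beta}}{\lambda}\vee\sqrt s\,\delta^\beta\vee s\lambda)$; and re-tracing the basic inequality with these bounds in hand yields the objective-gap bound $f_\lambda(\btheta)-f_\lambda(\btheta^*)\lesssim\frac1{\rho_-}(\delta^{2\beta}\vee\sqrt s\,\delta^\beta\lambda\vee s\lambda^2)$. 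The main obstacle I anticipate is the bookkeeping around the bias term $E_2=\nabla R_\delta(\btheta^*)-\nabla R(\btheta^*)$: unlike the standard M-estimation setting where $\nabla R_\delta^n(\btheta^*)$ is controlled solely in $\ell_\infty$ against $\norm{\cdot}_1$, here the $\delta^\beta\norm{\cdot}_2$ term must be carried through both the quadratic RSC inequality and the $\ell_1$ cone inequality \emph{without} being crudely bounded by $\delta^\beta\sqrt{s}\norm{\cdot}_1$ — otherwise one recovers the suboptimal rate flagged in the discussion after Theorem \ref{theorem_main2}. Keeping $E_2$ paired with $\norm{\cdot}_2$ throughout, and only invoking the ``$\vee$'' (max) structure at the very end when solving the quadratic, is the delicate part; the three different regimes in the $\ell_1$ bound ($\delta^{2\beta}/\lambda$, $\sqrt s\,\delta^\beta$, $s\lambda$) arise precisely from which term dominates in that final step.
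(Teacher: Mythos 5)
Your proposal is correct and follows essentially the same route as the paper's proof: the suboptimality bound $\omega_\lambda(\btheta)\leq\tfrac12\lambda$ tested at $\btheta^*$, RSC over $(s+\tilde s)$-sparse directions, the split of $(\btheta-\btheta^*)^T\nabla R_\delta^n(\btheta^*)$ into $E_1$ (paired with $\|\cdot\|_1$ and $\lambda/8$) and $E_2$ (paired with $\|\cdot\|_2$ via Proposition \ref{prop_bias} and $\nabla R(\btheta^*)=0$), followed by the support decomposition and a case analysis that yields the $\vee$-structure in all three bounds, which is exactly the delicate point you flag. The only blemish is the spurious constant term $\delta^{2\beta}/\lambda$ in your intermediate quadratic (it disappears once the cone bound is substituted correctly, as in the paper's two-case argument), a bookkeeping slip that does not affect the conclusion.
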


\begin{proof}
	See Section \ref{proof_lemma_initialization} for a detailed proof.
\end{proof}
Lemma \ref{lemma_initialization} suggests that if $\btheta$ is sparse and is $\lambda/2$ optimal with respect to the suboptimality criterion defined in (\ref{equ_subopti}), then the distance between $\btheta$ and $\btheta^*$ as well as the corresponding function values can be characterized by $\delta$ and $\lambda$ along with the smoothness $\beta$ and sparsity $s$.  Recall that under the path-following framework, for each stage $t = 1,\dotso,N$, we exploit a warm start from the end of previous stage. Lemma  \ref{lemma_initialization} is exactly a quantification for the closeness of the initial value $\btheta_t^{0} = \tilde{\btheta}_{t-1}$ to $\btheta^*$ when $\omega_{\lambda_{t}}(\btheta_t^{0}) \leq \lambda_{t}/2. $ Note that this lemma is a deterministic result and here we do not specify the value of $\delta$, however it is easy to see that by taking $\delta = \big(
\frac{s \log d}{n}
\big)^{1/(2\beta+1)}$, each term within the three terms on the right hand side (RHS) are balanced to the same order.

The next two lemmas characterize the properties of the iterates  $\btheta_t^{1},\dotso$ at stage $t$.
\begin{lemma}\label{lemma_norm}
	Assume that the conditions of Proposition \ref{prop_bias} and Assumption \ref{RSC} hold. For $\lambda \geq \lambda_{tgt}$, if $\norm{\nabla R_\delta^n(\btheta^*) - \nabla R_\delta(\btheta^*)}_\infty \leq \lambda/8$, $\btheta \in \Omega$, $\norm{\btheta_{S^{*c}}}_0 \leq \tilde{s},$ and $f_\lambda(\btheta) - f_\lambda(\btheta^*) \leq \frac{\bar{C}_2}{2\rho_{-}}\big( \delta^{2\beta} \lor \sqrt{s}\delta^{\beta}\lambda \lor   s\lambda^2  \big)$ then we have
	\beq
	&\norm{\btheta - \btheta^*}_2 \leq \frac{\bar{C}'_1}{\rho_{-}}\bigg( \delta^{\beta} \lor s^{1/4}\sqrt{\delta^{\beta}\lambda}   \lor \sqrt{s}\lambda \bigg ),\\
	&\norm{\btheta - \btheta^*}_1 \leq \frac{2\bar{C}_2}{\rho_{-}}\bigg( \frac{\delta^{2\beta}}{\lambda} \lor \sqrt{s}\delta^{\beta} \lor   s\lambda  \bigg),
	\eeq
	where $\bar{C}'_1$ depends on $\bar{C}_1, \bar{C}_2$, which are constants defined in Lemma \ref{lemma_initialization}.
\end{lemma}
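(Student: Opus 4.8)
The plan is to deduce Lemma~\ref{lemma_norm} from Lemma~\ref{lemma_initialization} together with the RSC condition, by converting the hypothesis on the objective gap $f_\lambda(\btheta)-f_\lambda(\btheta^*)$ into the stated $\ell_2$ and $\ell_1$ bounds. First I would set $\bDelta=\btheta-\btheta^*$ and record the basic consequences of sparsity: since $\norm{\btheta_{S^{*c}}}_0\le\tilde s$ and $\btheta^*$ is supported on $S^*$ with $|S^*|\le s$, the vector $\bDelta$ is supported on a set of size at most $s+\tilde s$, so the RSC inequality (\ref{RSC_condi}) applies to the pair $\btheta,\btheta^*$. This gives
\beq
R_\delta^n(\btheta)\ge R_\delta^n(\btheta^*)+\nabla R_\delta^n(\btheta^*)^T\bDelta+\tfrac12\rho_-\norm{\bDelta}_2^2 .
\eeq

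Next I would expand the objective gap: $f_\lambda(\btheta)-f_\lambda(\btheta^*)=R_\delta^n(\btheta)-R_\delta^n(\btheta^*)+\lambda(\norm{\btheta}_1-\norm{\btheta^*}_1)$. Substituting the RSC bound and rearranging,
\beq
\tfrac12\rho_-\norm{\bDelta}_2^2\le f_\lambda(\btheta)-f_\lambda(\btheta^*)-\nabla R_\delta^n(\btheta^*)^T\bDelta-\lambda(\norm{\btheta}_1-\norm{\btheta^*}_1).
\eeq
The middle term I would control by splitting $\nabla R_\delta^n(\btheta^*)=E_1+E_2$ as in (\ref{eq_biasvariance}): the $E_1$ part is bounded by the hypothesis $\norm{E_1}_\infty\le\lambda/8$ against $\norm{\bDelta}_1$, while the $E_2$ part is a sparse linear functional of $\bDelta$, hence controlled by Proposition~\ref{prop_bias} as $|E_2^T\bDelta|\le C_2\delta^\beta\norm{\bDelta}_2$ (using that $\bDelta$ lies in a sparse set, which is why Assumption~\ref{assum_density} asks for $s'=Cs$ large enough). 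For the $\ell_1$ penalty difference I would use the standard cone-type manipulation $\norm{\btheta}_1-\norm{\btheta^*}_1\ge\norm{\bDelta_{S^{*c}}}_1-\norm{\bDelta_{S^*}}_1$, so that combining with the $\lambda/8$ term yields something like $-(\lambda/8)\norm{\bDelta}_1-\lambda(\norm{\bDelta_{S^{*c}}}_1-\norm{\bDelta_{S^*}}_1)\le -(\tfrac78\lambda)\norm{\bDelta_{S^{*c}}}_1+(\tfrac98\lambda)\norm{\bDelta_{S^*}}_1$; in particular the cross terms can be absorbed so that $\norm{\bDelta_{S^{*c}}}_1$ is dominated by $\norm{\bDelta_{S^*}}_1$, giving $\norm{\bDelta}_1\lesssim\sqrt s\,\norm{\bDelta}_2$ up to additive contributions from the bias and the objective gap.

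Putting these together I would arrive at a quadratic inequality of the schematic form
\beq
\rho_-\norm{\bDelta}_2^2\lesssim \delta^\beta\norm{\bDelta}_2+\lambda\sqrt s\,\norm{\bDelta}_2+\big(\delta^{2\beta}\lor\sqrt s\,\delta^\beta\lambda\lor s\lambda^2\big),
\eeq
where the last parenthesis is exactly the bound on $f_\lambda(\btheta)-f_\lambda(\btheta^*)$ from the hypothesis (times $\bar C_2/2\rho_-$). Solving this quadratic in $\norm{\bDelta}_2$ — discarding constants and taking square roots of the constant term — produces the three regimes $\delta^\beta$, $s^{1/4}\sqrt{\delta^\beta\lambda}$, and $\sqrt s\lambda$ in the $\ell_2$ bound, which matches the statement; the term $s^{1/4}\sqrt{\delta^\beta\lambda}$ is precisely what arises from taking the square root of the cross term $\sqrt s\,\delta^\beta\lambda$ in the objective gap. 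Feeding this back through $\norm{\bDelta}_1\lesssim\sqrt s\,\norm{\bDelta}_2+(\text{bias/gap corrections})$ and simplifying the resulting maxima (noting $s^{1/4}\sqrt{\delta^\beta\lambda}\cdot\sqrt s=\sqrt s\,s^{1/4}\sqrt{\delta^\beta\lambda}$ is dominated by $\sqrt s\delta^\beta\lor s\lambda$ after a case split) recovers the $\ell_1$ bound $\tfrac{2\bar C_2}{\rho_-}\big(\delta^{2\beta}/\lambda\lor\sqrt s\delta^\beta\lor s\lambda\big)$, the same three terms appearing in Lemma~\ref{lemma_initialization}. The main obstacle I anticipate is the bookkeeping in the quadratic-inequality step: one must carefully track which of the several competing terms dominates in each regime and verify that the cross term genuinely yields $s^{1/4}\sqrt{\delta^\beta\lambda}$ rather than being absorbed, and one must make sure the constant $\bar C'_1$ is expressed purely in terms of $\bar C_1,\bar C_2,\rho_-$ — this is where a slightly loose application of Young's inequality (e.g. $ab\le \tfrac\epsilon2 a^2+\tfrac1{2\epsilon}b^2$) at the wrong split would either lose the middle regime or blow up the constant. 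A secondary subtlety is ensuring the sparsity of $\bDelta$ is genuinely at most $s'$ so that both Proposition~\ref{prop_bias} and the RSC condition are legitimately invoked, which is guaranteed by the $\norm{\btheta_{S^{*c}}}_0\le\tilde s$ hypothesis and the choice $\tilde s=Cs$.
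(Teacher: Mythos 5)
Your proposal is correct and follows essentially the same route as the paper's proof: apply RSC to the pair $(\btheta,\btheta^*)$, bound the gradient term via the $E_1$/$E_2$ decomposition (using Proposition \ref{prop_bias} on the sparse direction $\btheta-\btheta^*$), use the cone decomposition of the penalty, and solve the resulting quadratic in $\norm{\btheta-\btheta^*}_2$, with the $s^{1/4}\sqrt{\delta^{\beta}\lambda}$ regime arising exactly as you say from the square root of the objective-gap term. The only difference is organizational: the paper first case-splits on whether $\norm{\btheta-\btheta^*}_1$ exceeds the target $\ell_1$ bound (reducing one case to the argument of Lemma \ref{lemma_initialization}, which is how it retains the exact constant $2\bar{C}_2/\rho_{-}$ in the $\ell_1$ bound), whereas your direct quadratic solve plus back-substitution yields the same three regimes with a possibly larger, but still admissible, constant.
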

\begin{proof}
	See Section \ref{proof_lemma_norm} for a detailed proof.
\end{proof}
\begin{lemma}\label{lemma_sparse}
	Under the same conditions of Lemma \ref{lemma_norm}, if we choose $\lambda_{tgt} = C\sqrt{\frac{\log d}{n\delta}}$ for some large enough constant $C$, 
	and $\delta = c\big(\frac{s \log d}{n}\big)^{1/(2\beta+1)}$ for some constant $c > 0$, then
	\[
	\norm{\cS_{\lambda \eta}(\btheta,\RR^d)_{S^{*c}}}_0 \leq \tilde{s},
	\]
	where $\tilde{s} = 8\big(\frac{\bar{C}_2}{\eta\rho_{-}} + \frac{2\bar{C}'^2_1\rho_{+}^2}{\rho^2_{-}}+ 2C_2^2 \big)\cdot s $, and $\bar{C}'_1$, $\bar{C}_2$, $C_2$ are constants defined in Lemma \ref{lemma_norm}, Lemma \ref{lemma_initialization} and Proposition \ref{prop_bias}, respectively.
\end{lemma}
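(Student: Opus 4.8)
The goal is to bound the sparsity of the soft-thresholding update $\cS_{\lambda\eta}(\btheta,\RR^d)$ outside the true support $S^*$. Recall that $\cS_{\lambda\eta}(\btheta,\RR^d)$ is the coordinate-wise soft-thresholding of $\btheta - \eta\nabla R_\delta^n(\btheta)$ at level $\lambda\eta$. A coordinate $j \notin S^*$ survives the thresholding only if $|(\btheta - \eta\nabla R_\delta^n(\btheta))_j| > \lambda\eta$, equivalently $|\btheta_j - \eta (\nabla R_\delta^n(\btheta))_j| > \lambda\eta$. The plan is to count how many such coordinates there can be, using that $\btheta$ itself is already sparse on $S^{*c}$ (by hypothesis $\norm{\btheta_{S^{*c}}}_0 \le \tilde s$) and that the gradient at $\btheta$ is close to the gradient at $\btheta^*$, which in turn is controlled by Propositions \ref{prop_var} and \ref{prop_bias}.

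First I would split the surviving coordinates $j \in S^{*c}$ into those where $\btheta_j \ne 0$ (at most $\tilde s$ of them, trivially) and those where $\btheta_j = 0$. For the latter, survival requires $\eta|(\nabla R_\delta^n(\btheta))_j| > \lambda\eta$, i.e. $|(\nabla R_\delta^n(\btheta))_j| > \lambda$. Next I would use the decomposition $\nabla R_\delta^n(\btheta) = \nabla R_\delta^n(\btheta^*) + [\nabla R_\delta^n(\btheta) - \nabla R_\delta^n(\btheta^*)]$ together with the bias-variance split (\ref{eq_biasvariance}) of $\nabla R_\delta^n(\btheta^*)$. On $S^{*c}$, Proposition \ref{prop_var} gives $\norm{E_1}_\infty \lesssim \sqrt{\log d/(n\delta)} \asymp \lambda_{tgt}$, and Proposition \ref{prop_bias} (applied to suitable sparse directions, e.g. standard basis vectors or differences of iterates) controls the bias term by $C_2\delta^\beta$; with the choice $\delta = c(s\log d/n)^{1/(2\beta+1)}$ one has $\delta^\beta \asymp \sqrt{s}\lambda_{tgt}$, so both are of the same order and absorbed by taking $C$ large in $\lambda_{tgt} = C\sqrt{\log d/(n\delta)}$. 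The cross term $\nabla R_\delta^n(\btheta) - \nabla R_\delta^n(\btheta^*)$ is handled by the restricted smoothness condition (\ref{RS_condi}) in Assumption \ref{RSC}: on the sparse difference $\btheta - \btheta^*$ it is bounded in a way that, combined with the $\ell_2$ bound on $\norm{\btheta - \btheta^*}_2$ from Lemma \ref{lemma_norm}, yields a bound controlling the number of large coordinates. Concretely, if $k$ coordinates in $S^{*c}$ with $\btheta_j = 0$ survive, then restricting attention to those coordinates, $k\lambda^2 \le \norm{(\nabla R_\delta^n(\btheta) - \nabla R_\delta^n(\btheta^*))_{\text{those }k}}_2^2 \lesssim \rho_+^2 \norm{\btheta - \btheta^*}_2^2 \lesssim \rho_+^2 \frac{\bar C_1'^2}{\rho_-^2}(\delta^\beta \lor s^{1/4}\sqrt{\delta^\beta\lambda} \lor \sqrt{s}\lambda)^2$, plus the contribution of $E_1 + E_2$ terms. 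Dividing by $\lambda^2$ and using $\delta^\beta \asymp \sqrt s\,\lambda$, every term on the right collapses to a constant multiple of $s$, giving $k \lesssim s$; adding the trivial $\tilde s$ from nonzero coordinates and tracking constants produces exactly $\tilde s = 8(\frac{\bar C_2}{\eta\rho_-} + \frac{2\bar C_1'^2\rho_+^2}{\rho_-^2} + 2C_2^2)s$.

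The main obstacle I anticipate is the bookkeeping around restricted smoothness: the bound $\norm{(\nabla R_\delta^n(\btheta) - \nabla R_\delta^n(\btheta^*))_T}_2 \le \rho_+\norm{\btheta - \btheta^*}_2$ from (\ref{RS_condi}) is only valid when the relevant vectors are sparse enough to lie in the admissible set over which RSM holds, and the index set $T$ of surviving coordinates is a priori of unknown size — so one has to run a self-consistent counting argument (assume $k$ survive, derive $k \lesssim s$, check this is within the sparsity budget $s + \tilde s$ built into Assumption \ref{RSC}, which is exactly why $\tilde s$ is defined with the constant $C$ from this very lemma, making it well-defined by construction). A secondary care point is that Proposition \ref{prop_bias} only bounds $\bv^T(\nabla R_\delta(\btheta^*) - \nabla R(\btheta^*))$ for sparse $\bv$, so to extract the coordinatewise/$\ell_2$-on-$T$ bound on $E_2$ one applies it with $\bv$ supported on $T$, which requires $|T| \le s'$ — again consistent once we know $|T| \lesssim s$ and $s' = Cs$ for $C$ large. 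I would state the counting inequality, verify the sparsity budgets close, and then collect constants.
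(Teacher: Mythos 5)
Your overall architecture is right and matches the paper in most respects: the pigeonhole split of $\bar\btheta = \btheta - \eta\nabla R_\delta^n(\btheta)$ into the four pieces $\btheta$, $\eta(\nabla R_\delta^n(\btheta^*)-\nabla R_\delta^n(\btheta))$, $\eta E_1$, $\eta E_2$ at level $\lambda\eta/4$; the RSM-based $\ell_2$ counting (a sign vector supported on the surviving set gives $\tfrac14\lambda|S_2| \le \sqrt{|S_2|}\,\rho_+\|\btheta-\btheta^*\|_2$); killing the $E_1$ set outright via $\lambda \ge \lambda_{tgt} > 4\|E_1\|_\infty$; Proposition \ref{prop_bias} with a sparse sign vector for $E_2$; and the final collapse $\delta^{2\beta}/\lambda^2 \le \sqrt{s}\,\delta^\beta/\lambda \le s$ under the stated choices of $\delta$ and $\lambda_{tgt}$. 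Your self-consistency remarks about the sparsity budgets for RSM and for Proposition \ref{prop_bias} are also the right things to worry about.

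However, your opening split contains a genuine gap that defeats the lemma. You bound the surviving coordinates in $S^{*c}$ with $\theta_j \ne 0$ ``trivially'' by $\tilde s$ and then add $k \lesssim s$ from the coordinates with $\theta_j = 0$. That yields $\norm{\cS_{\lambda\eta}(\btheta,\RR^d)_{S^{*c}}}_0 \le \tilde s + O(s)$, which is strictly larger than $\tilde s$ and cannot ``produce exactly $\tilde s$'' as you claim; worse, it would let the sparsity grow by $O(s)$ at every proximal-gradient step, breaking the induction in Proposition \ref{prop_key} that keeps all iterates inside the budget $s+\tilde s$ on which RSC/RSM hold. The missing idea is that the nonzero coordinates of $\btheta$ on $S^{*c}$ are not all survivors: after the four-way pigeonhole, the contribution of $\theta_j$ itself matters only when $|\theta_j| \ge \tfrac14\lambda\eta$, and since $\theta_j = \theta_j - \theta^*_j$ on $S^{*c}$, a Markov-type count gives
\[
\Big|\{j \in S^{*c}: |\theta_j| \ge \tfrac14\lambda\eta\}\Big| \le \frac{4}{\eta\lambda}\norm{\btheta - \btheta^*}_1 \le \frac{8\bar C_2}{\eta\rho_-}\Big(\frac{\delta^{2\beta}}{\lambda^2} \lor \frac{\sqrt{s}\,\delta^\beta}{\lambda} \lor s\Big) = O(s),
\]
where the second inequality is the $\ell_1$ bound from Lemma \ref{lemma_norm}. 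This is why the first term in the constant is $\bar C_2/(\eta\rho_-)$ and why the total count closes at $\tilde s$ with no leftover $\tilde s$ summand. Replace your ``trivial $\tilde s$'' bound with this thresholded $\ell_1$ count and the argument goes through.
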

\begin{proof}
	See Section \ref{proof_lemma_sparse} for a detailed proof.
\end{proof}
Lemma \ref{lemma_norm} and \ref{lemma_sparse} together suggest that if the initialization at stage $t$ is sparse and satisfies $\omega_{\lambda_{t}}(\btheta_t^{0}) \leq \lambda_{t}/2$, then the next iterate $\btheta_t^{(1)}$ should also be sparse and have nice statistical properties. Under Assumption \ref{RSC}, we can also show that the objective function values $f_\lambda(\btheta_t^{0}), f_\lambda(\btheta_t^{1}),\dotso$ are decreasing (see Lemma \ref{lemma_monotone} in Section \ref{sec_appen_algo}), so the conditions in Lemma \ref{lemma_norm} and \ref{lemma_sparse} also hold for the whole path of iterates $\btheta_t^{(1)},\dotso$, ensuring sparsity and convergence to a local solution. This is formally stated in the following proposition.

\begin{proposition}\label{prop_key}
	Under Assumption \ref{assum_propensity} - \ref{RSC} , suppose $K$ is a proper kernel of order $l = \floor{\beta}$ satisfying $\int |t^{\beta}||K(t)|dt <  \infty$, $\eta \leq \frac{1}{\rho_{+}}$ and $\delta = c\big(\frac{s \log d}{n}\big)^{1/(2\beta+1)}$ for some constant $c > 0$.  If $\norm{\nabla R_\delta^n(\btheta^*) - \nabla R_\delta(\btheta^*)}_\infty \leq \lambda_{tgt}/8 \leq \lambda_{t}/8$ and at stage $t$, the proximal gradient method is initialized with $\btheta^0_t\in \Omega$ satisfying
	$$\norm{(\btheta^0_t)_{S^{*c}}}_0 \leq \tilde{s}\; \text{and}\; \omega_{\lambda_t}(\btheta^0_t) \leq \frac12 \lambda_t,$$
	then for $k = 1,2,\dotso,$ we have
	\begin{itemize}
		\item
		$\norm{(\btheta^k_t)_{S^{*c}}}_0 \leq \tilde{s}.$ 
		\item The sequence $\{\btheta_t^k\}_{k = 0}^\infty$ converges towards a unique local solution $\widehat{\btheta}_t$ satisfying the first-order optimality $\omega_{\lambda_t}(\widehat{\btheta}_t) \leq 0$ with $\norm{(\widehat{\btheta}_t)_{S^{*c}}}_0 \leq \tilde{s}$.
		\item $f_{\lambda_t}(\btheta_t^{k})- f_{\lambda_t}(\widehat{\btheta}_t) \leq (1-\frac{\eta\rho_{-}}{4})^k(f_{\lambda_t}(\btheta_t^{0} ) - f_{\lambda_t}(\widehat{\btheta}_t))$.
	\end{itemize}
	
\end{proposition}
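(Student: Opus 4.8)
The plan is a single induction on the proximal-gradient index $k$ at the fixed stage $t$, carrying the invariant that $\btheta^k_t\in\Omega$, $\norm{(\btheta^k_t)_{S^{*c}}}_0\le\tilde s$, and $f_{\lambda_t}(\btheta^k_t)-f_{\lambda_t}(\btheta^*)\le\frac{\bar C_2}{2\rho_-}\big(\delta^{2\beta}\lor\sqrt s\,\delta^\beta\lambda_t\lor s\lambda_t^2\big)$. The base case $k=0$ is immediate: the proposition's hypotheses on $\btheta^0_t$ (membership in $\Omega$, at most $\tilde s$ nonzeros off $S^*$, and $\omega_{\lambda_t}(\btheta^0_t)\le\tfrac12\lambda_t$) together with the variance bound $\norm{\nabla R^n_\delta(\btheta^*)-\nabla R_\delta(\btheta^*)}_\infty\le\lambda_t/8$ are exactly the hypotheses of Lemma \ref{lemma_initialization}, whose third conclusion is the function-value invariant. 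For the inductive step, the invariant at $k$ makes $\btheta^k_t$ satisfy the hypotheses of Lemma \ref{lemma_norm}, hence also those of Lemma \ref{lemma_sparse}, giving $\norm{\cS_{\lambda_t\eta}(\btheta^k_t,\RR^d)_{S^{*c}}}_0\le\tilde s$; since replacing $\RR^d$ by the $\ell_2$-ball $\Omega$ in the proximal step only multiplies the unconstrained soft-thresholded minimizer by a scalar in $(0,1]$ (the Lagrangian form of the $\ell_2$-ball-constrained prox), the support is unchanged, so $\norm{(\btheta^{k+1}_t)_{S^{*c}}}_0\le\tilde s$, and $\btheta^{k+1}_t\in\Omega$ by construction. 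The function-value invariant then propagates from $f_{\lambda_t}(\btheta^{k+1}_t)\le f_{\lambda_t}(\btheta^k_t)$, the descent property of proximal-gradient under Assumption \ref{RSC} with $\eta\le1/\rho_+$ (Lemma \ref{lemma_monotone}). This closes the induction and establishes the first bullet.

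For the geometric rate (third bullet) I would run the textbook three-point argument for proximal-gradient under restricted strong convexity and smoothness, adapted to sparse vectors. Write $\widetilde Q_\eta(\cdot;\btheta^k_t)$ for the $\ell_1$-regularized quadratic surrogate of $f_{\lambda_t}$ expanded at $\btheta^k_t$ with curvature $1/\eta$, the map whose minimizer over $\Omega$ is $\btheta^{k+1}_t$. Then $\eta\le1/\rho_+$ and \eqref{RS_condi} give $f_{\lambda_t}(\btheta^{k+1}_t)\le\widetilde Q_\eta(\btheta^{k+1}_t;\btheta^k_t)$; the $\tfrac1\eta$-strong convexity of $\widetilde Q_\eta$ gives $\widetilde Q_\eta(\btheta^{k+1}_t;\btheta^k_t)\le\widetilde Q_\eta(\btheta;\btheta^k_t)-\tfrac1{2\eta}\norm{\btheta-\btheta^{k+1}_t}_2^2$ for any $\btheta\in\Omega$; and \eqref{RSC_condi} gives $\widetilde Q_\eta(\btheta;\btheta^k_t)\le f_{\lambda_t}(\btheta)+\tfrac{1-\eta\rho_-}{2\eta}\norm{\btheta-\btheta^k_t}_2^2$ for sparse $\btheta\in\Omega$. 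Taking $\btheta=(1-\alpha)\btheta^k_t+\alpha\widehat\btheta_t$, which lies in $\Omega$ and (by the sparsity budget discussed below) in the region where \eqref{RSC_condi}--\eqref{RS_condi} apply, bounding $f_{\lambda_t}(\btheta)$ by restricted strong convexity of $f_{\lambda_t}$ along that segment, and optimizing over $\alpha\in(0,1]$ yields a contraction of $f_{\lambda_t}(\btheta^k_t)-f_{\lambda_t}(\widehat\btheta_t)$ by a factor $(1-c\,\eta\rho_-)$; the choice $c=1/4$ (with room to spare) gives the stated bound upon iterating.

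The remaining ingredient (second bullet) is existence, uniqueness, and convergence of the iterates to $\widehat\btheta_t$. The descent step combined with \eqref{RSC_condi} yields $f_{\lambda_t}(\btheta^k_t)-f_{\lambda_t}(\btheta^{k+1}_t)\gtrsim\tfrac1\eta\norm{\btheta^{k+1}_t-\btheta^k_t}_2^2$, hence $\sum_k\norm{\btheta^{k+1}_t-\btheta^k_t}_2^2<\infty$; since $\{\btheta^k_t\}$ lies in the compact set $\Omega$ with a uniformly bounded number of nonzeros off $S^*$, it has accumulation points, each a fixed point of the continuous map $\cS_{\lambda_t\eta}(\cdot,\Omega)$ — equivalently a point with $\omega_{\lambda_t}(\cdot)\le0$ — carrying at most $\tilde s$ nonzeros off $S^*$ by passage to the limit. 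Applying \eqref{RSC_condi} along the segment joining two such fixed points, together with their first-order conditions, forces them to coincide; uniqueness then promotes subsequential convergence to convergence of the full sequence, and the rate of the previous paragraph applies.

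I expect the main obstacle to be the interlocking sparsity bookkeeping: the argument simultaneously manipulates $\btheta^k_t$, $\btheta^{k+1}_t$, their difference, the limit $\widehat\btheta_t$, and convex combinations of these, and every one of them must remain within the sparsity level $s+\tilde s$ over which Assumption \ref{RSC} grants restricted strong convexity and smoothness, while the objective-value invariant that licenses each reuse of Lemmas \ref{lemma_norm} and \ref{lemma_sparse} has to be carried along the whole trajectory and across all $N$ stages. This is precisely why $\tilde s$ is taken to be a sufficiently large constant multiple of $s$ (Lemma \ref{lemma_sparse}) and why the calibration $\delta\asymp(s\log d/n)^{1/(2\beta+1)}$, $\lambda_{tgt}\asymp\sqrt{\log d/(n\delta)}$ is imposed: it equalizes the three competing terms in Lemma \ref{lemma_norm} so that $\tilde s=O(s)$, making the invariant self-consistent.
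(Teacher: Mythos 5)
Your proposal is correct and follows essentially the same route as the paper: the same induction carrying the sparsity and function-value invariant via Lemmas \ref{lemma_initialization}, \ref{lemma_sparse}, and \ref{lemma_monotone}; the same compactness-plus-RSC argument for existence and uniqueness of the limit $\widehat\btheta_t$ (the paper uses Lemma \ref{lemma_nes_subop} to pass to $\omega_{\lambda_t}\to 0$, which is equivalent to your fixed-point phrasing); and the same convex-combination contraction with $\alpha=\eta\rho_-/2$ for the geometric rate, which your three-point inequality merely repackages. The sparsity-budget caveat you flag for the segment $(1-\alpha)\btheta^k_t+\alpha\widehat\btheta_t$ is handled in the paper exactly as you anticipate, by taking $\tilde s$ a sufficiently large constant multiple of $s$.
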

\begin{proof}
	See Section \ref{proof_prop_key} for a detailed proof.
\end{proof}

The first two results in Proposition \ref{prop_key} suggest that sparsity is ensured for the whole sequence of iterates $\btheta^k_t$ at stage $t$, which converges to a unique local solution $\hat{\btheta}_t$, when initialized well. The third result implies geometric rate of convergence for a single stage of the path-following algorithm. Consequently, by computing $\tilde{\btheta}_{t-1}$ up to a certain precision, 
we would expect it to be close enough to the exact solution $\hat{\btheta}_{t-1}$ which lies in a fast convergence region for an exact solution $\hat{\btheta}_{t}$ of the next stage. By repeating this process for the whole regularization path until the target $\lambda_{tgt}$ achieves the optimal statistical performance, we will stop and obtain the final estimator $\tilde{\btheta}_{tgt}$. Indeed, the aforementioned argument is rigorously proved in Theorem \ref{theorem_main2}. See Section \ref{proof_theo_main2} for a detailed proof.

\subsection{Proof of minimax lower bound}

\begin{proof}\label{proof_minimax}
	The proof can be summarized in two steps.
	\begin{enumerate}
		\item We construct a finite set of hypotheses $\cH = \big \{(\btheta_i,P_i(X,Y,\bZ))  \big \} \subset \Theta_{\beta,s}$.
		\item We apply Theorem 2.7 in \citet{tsybakovintroduction} to show the desired results by checking the following conditions:
		\begin{enumerate}[label=(\Alph*)]
			\item $KL(P_{j},P_{0}) \leq \gamma \log|\cH|$ for some $\gamma \in (0,1/8)$, where $KL(P,Q)$ is the Kullback divergence between probability two measures $P$ and $Q$.
			\item For all $j\neq k$ and $q = 1,2$, $\norm{\btheta_j - \btheta_k}_q \geq 2t$, where $t \asymp s^{\frac 1q-\frac 12}\big(\frac{s\log (d/s)}{n}\big)^{\beta/(2\beta+1)}$.
		\end{enumerate}
	\end{enumerate}
	\textbf{Step 1:}

	For simplicity, throughout we define $c$ as a positive constant that may vary from line to line.
	Consider the set
	\[
	\cM= \{x \in \{0,1\}^{d}: \norm{x}_0 = s\}.
	\]
	It follows from Varshamov-Gilbert bound (see Lemma 2.9 in \cite{tsybakovintroduction}) and Lemma 8 in \cite{rey2013} that there exists a subset $\cH'$ of $\cM$ such that $\rho_H(x,x') > s/16$ for $x,x' \in \cH',x\neq x'$ and
	\begin{equation}\label{eqGilbert}
	\log|\cH'| \geq c' s \log(\frac{d}{s}),
	\end{equation}
	where $\rho_H$ denotes the Hamming distance and $c'$ is some absolute constant.
	Then we let $\bomega_0 = \mathbf{0} \in \RR^d$ and use $\bomega_j$ to denote the elements in $\cH'$ for $j = 1,\dotso,|\cH'|$.
	
	Now we start to construct $P_j(X,Y,\bZ)$. In the sequel, we set
	\beq\label{eq_minimax_2}
	\delta = \tilde{c}\bigg(\frac{s\log (d/s)}{n}\bigg)^{1/(2\beta+1)},
	\eeq
	where $\tilde{c}$ is a sufficiently small constant.	For all $j = 0,\dotso,|\cH'|$, we assume $\PP(Y = 1) = \PP(Y = -1) = 0.5$ and each of $Z_1,\dotso,Z_d$ follows a Uniform distribution on $[-1,1]$ independently.
	Recall that we choose $w(y) = 1/\PP(Y=y)$ as the weight function.
	For the probability density $f(x|y,\bz)$ of $X$ conditioned on $Y$ and $Z$,  consider the function
	\[
	t(x) = \int_{-\infty}^{x}\frac{h(u)}{h(u) + h(1-u)}du,
	\]
	where
	\[
	h(x) = \begin{cases}
	e^{-1/x}  &\text{if} ~x > 0,\\
	0 &\text{if} ~x \leq 0.
	\end{cases}
	\]
	The function $t(x)$ defined above satisfies the following properties:
	\begin{itemize}
		\item  $t(x) \in C^{\infty}$ on $\RR$.
		\item  $t(x) \geq 0$ and $t(x)$ is non-decreasing.
		\item $t(x) = 0~\forall~x \leq 0$ and $t'(x) = 1~\forall~x\geq 1$.
	\end{itemize}
	Now we define
	\[
	k(x) = \begin{cases}
	t(x+4)&\text{if} ~x \leq -1,\\
	2t(1) + 2 - t(-x) &\text{if} ~ -1 <x \leq 0,\\
	2t(1) + 2 - t(x)&\text{if} ~ 0<x\leq 1,\\
	t(-x+4)&\text{if} ~x > 1,
	\end{cases}
	\]
	and $g(x) = \frac{m}{\sigma}k(x/\sigma)$, where $m = 1/\int k(t)dt$ is an absolute normalizing constant. By $g(x)\geq 0$, then $g(x)$ is a proper density function. It's easy to check that $t^{(l)}(x) = 0 ~\forall~x<0,l = 0,1,\dotso,$ and $t^{(l)}(x) = 0 ~\forall~x> 1,l = 2,3,\dotso$. Since $t(x) \in C^{\infty}$ on $\RR$, $t(x)$ is also infinitely differentiable on $[0,1]$ which is compact. This implies that $t^{(l)}(x)$ is bounded for $l = 0,1,\dotso,\floor{\beta}$. Thus by choosing $\sigma > 0$
	 large enough, we can eusure  that $g(x) \in \Sigma (\beta,L/2)$, where recall that $\Sigma (\beta,L)$ denotes the H\"older class with constant parameters $\beta$ and $L$.
	
	Now let $H(u) = \exp(-\frac{1}{1 - u^2})\mathds{1}(|u|\leq 1)$ and define $R(u) = a(H(u) - H(u-1))$, where $a > 0$ is a sufficiently small constant.
	The function $R(u)$ above satisfies the following properties (see also Section 2.5 in \citet{tsybakovintroduction} )
	\begin{itemize}
		\item $R(u) \in C^{\infty}$ on $\RR$.
		\item $R(u)\leq a H_{\max}$ for some constant $H_{\max}<\infty$.
		\item $\int R(u) du = 0$.
		\item $R(u) = 0$ for $u \in (-\infty,-1] \cup \{\frac 12 \} \cup [2,\infty)$.
	\end{itemize}
	With $a > 0$ being sufficiently small, $R(u) \in C^\infty \cap \Sigma(\beta,1/2)$ will hold.
	
	Now we are ready to construct our hypotheses. For each $j = 0,\dotso,|\cH'|$, we let
	
	\[
	f_j(x|y = 1,\bz) = \begin{cases}
	g(x-2\sigma) &\text{if}~ j = 0,\\
	g(x-2\sigma) +  \tilde{g}_{j,\bz}(x)&\text{otherwise},
	\end{cases}
	\]
	where
	$$\tilde{g}_{j,\bz}(x) = L\delta^\beta R(\frac{x - x_{j,\bz}}{\delta })\frac{\bomega_j^T\bz}{\sqrt{s}}$$
	with $x_{j,\bz} = \frac{\delta^\beta L\sigma^2 ae^{-1}}{2m\sqrt{s}} \bomega_j^T\bz - \delta$.                                                                                                                                                                                                   
	Meanwhile, for $j = 0,\dotso,|\cH'|$, we let $f_j(x|y = -1,\bz) = g(x+2\sigma)$.
	We will first show that
	\beq\label{eq_minimax_1}
	\tilde{g}_{j\bz}(x) = 0 ~\forall~ x \in (-\infty,-\sigma] \cup [\sigma,\infty], \bz \in [0,1]^d.
	\eeq
	To see this, for any $|x|>\sigma$, we have
	\begin{align*}
	\Big|\frac{x - x_{j,\bz}}{\delta}\Big|\geq \frac{\sigma - |x_{j,\bz}|}{\delta}&\geq \frac{\sigma}{\delta}-\frac{\delta^{\beta-1} L\sigma^2 ae^{-1}}{2m\sqrt{s}} |\bomega_j^T\bz| - 1\\
	&\geq \frac{\sigma}{\delta}-\frac{\delta^{\beta-1} L\sigma^2 ae^{-1}\sqrt{s}}{2m} - 1,
	\end{align*}
	where the last step follows from the Holder inequality and $\|\bomega_j\|_1=s$.
	With a sufficiently large constant $C$ in the condition of Theorem \ref{theorem_minimax} that is independent of $d,n$ and $s$, we can ensure $\sqrt{s}\delta^{\beta-1}$ is bounded and thus (\ref{eq_minimax_1}) holds by the 4th property of $R(u)$.
	
	{
	Next, by the definition of $\tilde{g}_{j,\bz}$ and boundedness of $\sqrt{s}\delta^{\beta-1}$, with $a$ in the definition of $R(u)$ being sufficiently small, we can ensure that
	\beq \label{eq_minimax_3}
	\norm{\tilde{g}_{j,\bz}}_\infty \leq \frac{mt(1)}{2\sigma} = \frac 12 g(-3\sigma) ~\forall~\bz \in [0,1]^d.
	\eeq
	In the following, we apply (\ref{eq_minimax_1}) and (\ref{eq_minimax_3}) to show $f_j(x|y = 1,\bz)\geq 0$ for $x\in\RR$. Since $g(\cdot)$ is non-negative and (\ref{eq_minimax_1}) holds, it suffices to show it for $x\in[-\sigma,\sigma]$. Therefore, for $j\geq 1$
	\beq\label{eq_minimax_4}
	f_j(x|y = 1,\bz) = g(x-2\sigma) +  \tilde{g}_{j,\bz}(x) \geq g(x-2\sigma)- \frac 12 g(-3\sigma)\geq 0,
	\eeq
	where the second step follows from (\ref{eq_minimax_3}) and the last step follows from the definition of $g(\cdot)$ and the 2nd property of $t(x)$. Based on the above derivation, we conclude that
	\begin{itemize}
	    \item $f_0(x|y = 1,\bz)$ and $f_j(x|y = -1,\bz)$ for $j\geq0$ are well-defined density functions, since $g(x)$ is a well-defined density function.
	    \item $f_j(x|y = 1,\bz)$ for $j\geq1$ are well-defined density functions by (\ref{eq_minimax_4}) and
	    $$
	\int f_j(x|y = 1,\bz) dx=\int g(x-2\sigma)dx +\int L\delta^\beta R(\frac{x - x_{j,\bz}}{\delta })\frac{\bomega_j^T\bz}{\sqrt{s}} dx=1,
	$$
	where we use the 3rd property of $R(u)$.
	\end{itemize}{}
	}
	Therefore, the construction of $P_0,\dotso,P_{\cH'}$ are well-defined.
	The following lemmas characterize two key properties of $P_j$.

	\begin{lemma}\label{lemma_holder_verify}
		Under the conditions of Theorem \ref{theorem_minimax} and the construction of $P_j = P_j(X,Y,\bZ)$ above,
		 $P_j \in \cP(\beta,L)~\forall~j = 0,\dotso,|\cH'|$.
	\end{lemma}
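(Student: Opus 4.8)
The plan is to verify, for each $j=0,\dots,|\cH'|$ and each $y\in\{-1,1\}$, the two requirements of Definition \ref{def_holder_new}: that $f_j(x\mid y,\bz)$ is $l=\floor{\beta}$ times differentiable in $x$, and that the averaged H\"older inequality (\ref{eq_holder_new}) holds with the stated constant $L$. Differentiability is immediate, since $g$ is a rescaling of $k$ (assembled from the $C^\infty$ function $t$) and $R$ is a difference of translates of the $C^\infty$ bump $H$, so every $f_j(\cdot\mid y,\bz)$ is in fact $C^\infty$, far beyond order $l$. The conceptual point making (\ref{eq_holder_new}) tractable is that the ingredients of $f_j$ are \emph{globally} H\"older in $x$, so the one-point H\"older estimate needed at the location $\btheta^{*T}\bz$ holds no matter where $\btheta^*$ sits; in particular we need not identify the minimizer of $R_{P_j}$ at this stage.

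For the unperturbed densities --- $f_j(\cdot\mid y=-1,\bz)=g(\cdot+2\sigma)$ for all $j$, and $f_0(\cdot\mid y=1,\bz)=g(\cdot-2\sigma)$ --- each is a translate of $g\in\Sigma(\beta,L/2)$, hence $|f_j^{(l)}(x_1\mid y,\bz)-f_j^{(l)}(x_2\mid y,\bz)|\le\tfrac{L}{2}|x_1-x_2|^{\beta-l}$ uniformly in $\bz$. Inserting this into the left side of (\ref{eq_holder_new}), pulling the absolute value inside the integral, and using that $Z_1,\dots,Z_d$ are i.i.d.\ Uniform$[-1,1]$ independent of $Y$ (so $\EE[|\bv^T\bZ|\mid Y=y]\le(\EE[(\bv^T\bZ)^2\mid Y=y])^{1/2}=\norm{\bv}_2/\sqrt{3}$) gives the bound $\tfrac{L}{2\sqrt{3}}\norm{\bv}_2|\Delta|^{\beta-l}\le L\norm{\bv}_2|\Delta|^{\beta-l}$, as required.

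The substantive case is $j\ge1$, $y=1$, where $f_j(\cdot\mid y=1,\bz)=g(\cdot-2\sigma)+\tilde g_{j,\bz}$. We would split the integrand of (\ref{eq_holder_new}) additively, bound the $g$-part exactly as above, and for the $\tilde g$-part differentiate $\tilde g_{j,\bz}(x)=L\delta^\beta R\big(\tfrac{x-x_{j,\bz}}{\delta}\big)\tfrac{\bomega_j^T\bz}{\sqrt{s}}$ a total of $l$ times in $x$, obtaining $\tilde g_{j,\bz}^{(l)}(x)=L\delta^{\beta-l}R^{(l)}\big(\tfrac{x-x_{j,\bz}}{\delta}\big)\tfrac{\bomega_j^T\bz}{\sqrt{s}}$; then $R\in\Sigma(\beta,1/2)$ and the chain rule give
\[
\big|\tilde g_{j,\bz}^{(l)}(x_1)-\tilde g_{j,\bz}^{(l)}(x_2)\big|\le \tfrac{L}{2}\,\tfrac{|\bomega_j^T\bz|}{\sqrt{s}}\,|x_1-x_2|^{\beta-l},
\]
with all powers of $\delta$ cancelling. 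Hence the $\tilde g$-part of (\ref{eq_holder_new}) is at most $\tfrac{L}{2\sqrt{s}}|\Delta|^{\beta-l}\int|\bv^T\bz|\,|\bomega_j^T\bz|\,f(\bz\mid y=1)\,d\bz$, which by Cauchy--Schwarz together with $\EE[(\bv^T\bZ)^2]=\tfrac13\norm{\bv}_2^2$ and $\EE[(\bomega_j^T\bZ)^2]=\tfrac13\norm{\bomega_j}_2^2=\tfrac{s}{3}$ (as $\bomega_j\in\{0,1\}^d$ has exactly $s$ ones) is bounded by $\tfrac{L}{6}\norm{\bv}_2|\Delta|^{\beta-l}$. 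Adding the two parts yields $\big(\tfrac{1}{2\sqrt{3}}+\tfrac16\big)L\norm{\bv}_2|\Delta|^{\beta-l}\le L\norm{\bv}_2|\Delta|^{\beta-l}$, which establishes (\ref{eq_holder_new}) and hence $P_j\in\cP(\beta,L,s')$.

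I expect the only real subtlety --- and the reason the construction is designed this way --- is that $\tilde g_{j,\bz}$ carries an $x$-H\"older seminorm proportional to $|\bomega_j^T\bz|/\sqrt{s}$, which is \emph{not} uniformly bounded over $\bz\in[-1,1]^d$ (it can be of order $\sqrt{s}$), so there is no chance of certifying a uniform H\"older class for $f_j(\cdot\mid y,\bz)$ directly. It is precisely the $\bz$-averaging in (\ref{eq_holder_new}), the independence and $\tfrac13$-variance of the $Z_k$, and the identity $\norm{\bomega_j}_2^2=s$ that convert the offending $\sqrt{s}$ back into the $1/\sqrt{s}$ built into the definition of $\tilde g_{j,\bz}$. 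One must also keep $m,a,\sigma$ absolute constants and exploit the $\delta^\beta$-amplitude / $\delta$-bandwidth scaling of $\tilde g_{j,\bz}$, which is exactly what makes the powers $\delta^\beta$, $\delta^{-l}$ and $\delta^{\beta-l}$ cancel in the seminorm computation --- the same cancellation behind the bias--variance balance used elsewhere in the paper.
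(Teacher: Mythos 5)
Your proposal is correct and follows essentially the same route as the paper's proof: split $f_j(\cdot\mid y=1,\bz)$ into the $g(\cdot-2\sigma)$ part and the perturbation $\tilde g_{j,\bz}$, control the first via $g\in\Sigma(\beta,L/2)$ and Jensen/Cauchy--Schwarz, and the second via $R\in\Sigma(\beta,1/2)$ with the $\delta$ powers cancelling, then use $\EE[\bZ\bZ^T\mid Y]=\tfrac13\II_d$ and $\norm{\bomega_j}_2=\sqrt{s}$ to absorb the $|\bomega_j^T\bz|/\sqrt{s}$ factor; your explicit constants $\tfrac{1}{2\sqrt3}+\tfrac16\le 1$ match the paper's bound $\tfrac{L}{2}(\sqrt{M_2}+M_2\norm{\bomega_j}_2/\sqrt{s})\le L$ with $M_2=1/3$. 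No gaps.
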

\begin{proof}
	See Section \ref{proof_holder_verify} for the detailed proof.
\end{proof}
	
	\begin{lemma}\label{lemma_sparse_mini}
		Under the conditions of Theorem \ref{theorem_minimax} and the construction of $P_j = P_j(X,Y,\bZ)$ above, the unique minimizer $\btheta_j \in \RR^d$ of the risk $R_{P_j}(\btheta)$ is
		\[
		\btheta_j = \begin{cases}
		0 &\text{if}~j = 0,\\
		\frac{\delta^\beta L\sigma^2 a e^{-1}}{2m\sqrt{s}} \bomega_j &\text{otherwise}.
		\end{cases}
		\]
		In addition, $\lambda_{\min}(\nabla^2R_P(\btheta_j)) \geq \rho$ for some $\rho>0$.
	\end{lemma}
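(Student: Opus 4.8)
The plan is to reduce the minimization of $R_{P_j}$ to a one–parameter family of problems. Since $\bZ$ is independent of $Y$, $\PP(Y=\pm1)=1/2$, and $w(y)=1/\PP(Y=y)=2$, conditioning on $\bZ$ gives $R_{P_j}(\btheta)=\EE_\bZ\big[r_{\bZ}(\btheta^T\bZ)\big]$, where $r_{\bz}(c)=\int_{-\infty}^{c}f_j(x\mid y=1,\bz)\,dx+\int_{c}^{\infty}g(x+2\sigma)\,dx$, so that $r_{\bz}'(c)=g(c-2\sigma)+\tilde g_{j,\bz}(c)-g(c+2\sigma)$ (with $\tilde g_{0,\bz}\equiv0$). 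The structural fact driving everything is that, by plugging the piecewise formula for $t$ (affine with unit slope on $[1,\infty)$) into the definitions of $k$ and $g$, one finds that on the window $c\in[-\sigma,\sigma]$ both $g(c-2\sigma)=\tfrac m\sigma\big(t(1)+1+\tfrac c\sigma\big)$ and $g(c+2\sigma)=\tfrac m\sigma\big(t(1)+1-\tfrac c\sigma\big)$ are \emph{exactly} affine; hence $g(c-2\sigma)-g(c+2\sigma)=2mc/\sigma^{2}$ and its derivative is the constant $2m/\sigma^{2}$. Moreover, by (\ref{eq_minimax_1}), $\tilde g_{j,\bz}$ is supported inside $(-\sigma,\sigma)$.

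Using these facts I would first show $r_{\bz}$ has a unique global minimizer in $(-\sigma,\sigma)$. On the window, $r_{\bz}''(c)=\tfrac{2m}{\sigma^{2}}+\tilde g_{j,\bz}'(c)$ and $|\tilde g_{j,\bz}'(c)|\le L\delta^{\beta-1}\sqrt s\,\sup|R'|$; since $\sqrt s\,\delta^{\beta-1}$ is bounded (by the choice of the constant $C$ in Theorem~\ref{theorem_minimax}) and $\sup|R'|\lesssim a$, taking $a$ small yields $r_{\bz}''(c)\ge m/\sigma^{2}>0$, so $r_{\bz}$ is strictly convex on $[-\sigma,\sigma]$; as $r_{\bz}'(-\sigma)=-2m/\sigma<0<2m/\sigma=r_{\bz}'(\sigma)$, it has a unique critical point there. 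Outside the window $\tilde g_{j,\bz}\equiv0$, so $r_{\bz}'(c)=g(c-2\sigma)-g(c+2\sigma)$, and since $k$ (hence $g$) is even, unimodal and supported on $[-4\sigma,4\sigma]$, one checks $r_{\bz}'>0$ on $(\sigma,6\sigma)$, $r_{\bz}'<0$ on $(-6\sigma,-\sigma)$, and $r_{\bz}\equiv1$ outside $[-6\sigma,6\sigma]$; hence that critical point is the unique global minimizer.

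Next I would identify this minimizer by direct substitution. With $u_j(\bz):=\btheta_j^T\bz=\tfrac{\delta^{\beta}L\sigma^{2}ae^{-1}}{2m\sqrt s}\bomega_j^T\bz$ and the shift $x_{j,\bz}=u_j(\bz)-\delta$, one has $(u_j(\bz)-x_{j,\bz})/\delta=1$ and $R(1)=a\big(H(1)-H(0)\big)=-ae^{-1}$, so $\tilde g_{j,\bz}(u_j(\bz))=-L\delta^{\beta}ae^{-1}\bomega_j^T\bz/\sqrt s$, which exactly cancels $g(u_j(\bz)-2\sigma)-g(u_j(\bz)+2\sigma)=2mu_j(\bz)/\sigma^{2}=\delta^{\beta}Lae^{-1}\bomega_j^T\bz/\sqrt s$; therefore $r_{\bz}'(\btheta_j^T\bz)=0$ for every $\bz\in[-1,1]^{d}$ (for $j=0$ this reads $c=0$, so $\btheta_0=\mathbf0$). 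Consequently $R_{P_j}(\btheta)=\EE_\bZ[r_{\bZ}(\btheta^T\bZ)]\ge\EE_\bZ[r_{\bZ}(\btheta_j^T\bZ)]=R_{P_j}(\btheta_j)$, and because $\btheta_j^T\bz$ is the \emph{unique} minimizer of $r_{\bz}$ for each $\bz$, equality forces $\btheta^T\bZ=\btheta_j^T\bZ$ a.s.; since $\bZ$ has a density on the full-dimensional cube $[-1,1]^{d}$, this gives $\btheta=\btheta_j$, so $\btheta_j$ is the unique minimizer.

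Finally, for the curvature bound I would differentiate $\nabla R_{P_j}(\btheta)=\EE_\bZ[\bZ\,r_{\bZ}'(\btheta^T\bZ)]$ once more at $\btheta_j$ (legitimate since in a neighborhood of $\btheta_j$ the argument stays in the affine window and $R\in C^{\infty}$): $r_{\bZ}''(\btheta_j^T\bZ)=\tfrac{2m}{\sigma^{2}}+L\delta^{\beta-1}R'(1)\bomega_j^T\bZ/\sqrt s$, and since $R$ is the standard bump, $R'(1)=a\big(H'(1)-H'(0)\big)=0$ (alternatively the extra term integrates to $0$ because every third moment of the i.i.d.\ symmetric coordinates of $\bZ$ vanishes), whence $\nabla^{2}R_{P_j}(\btheta_j)=\tfrac{2m}{\sigma^{2}}\EE[\bZ\bZ^{T}]=\tfrac{2m}{3\sigma^{2}}I$ and $\lambda_{\min}(\nabla^{2}R_{P_j}(\btheta_j))=\tfrac{2m}{3\sigma^{2}}=:\rho>0$, uniformly in $n,d,s,j$. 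The main obstacle is the bookkeeping in the second paragraph: carving the affine and monotone regimes out of the explicit $t,k,g,R$ and checking that $a$, $\sigma$, and the bound on $\sqrt s\,\delta^{\beta-1}$ can be chosen simultaneously so that $r_{\bz}$ is strictly convex on $[-\sigma,\sigma]$ while $\btheta_j^T\bz$ remains strictly inside $(-\sigma,\sigma)$.
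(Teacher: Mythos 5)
Your proposal is correct, and its core is the same as the paper's: the paper proves optimality by showing $\sign{\EE_{j}[Y|x,\bz]}=\sign{x-\btheta_j^T\bz}$ and integrating over the disagreement region, using exactly the three ingredients you use — the affine form of $g(\cdot\mp 2\sigma)$ on $[-\sigma,\sigma]$ with slope $\pm m/\sigma^2$ (so the difference has slope $2m/\sigma^2$), the cancellation at $(\btheta_j^T\bz-x_{j,\bz})/\delta=1$ via $R(1)=-ae^{-1}$ (your substitution is the paper's Step 1 verbatim), and the derivative bound $|\tilde g'_{j,\bz}|<m/\sigma^2$ (the paper's (\ref{eq_unique_11})) together with the support property (\ref{eq_minimax_1}) to handle $|x|\geq\sigma$. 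Since $r_{\bz}'(c)=f_j(c|1,\bz)-f_j(c|-1,\bz)$, your formulation via strict convexity of the conditional risk on the window plus monotone tails is just a repackaging of the paper's sign identity, and your uniqueness argument (pointwise unique minimizer of $r_\bz$ plus full-dimensional support of $\bZ$) plays the role of the paper's positive-measure set $\bar{\cG}$. The one genuine difference is the curvature bound: the paper keeps the perturbation term and bounds $|\tilde g'_{j,\bz}(\btheta_j^T\bz)|<m/\sigma^2$ to get a strict lower bound on the bracket, whereas you use the exact identity $R'(1)=a\big(H'(1)-H'(0)\big)=0$ (or the vanishing third moments of $\bZ$), so the perturbation contributes nothing at $\btheta_j$ and the Hessian is exactly a positive multiple of $\EE[\bZ\bZ^T]=\tfrac13\II_d$; both routes are valid, and your constant differs from the paper's only by the factor $2$ coming from their normalization $w(Y)(1-\sign{\cdot})$ versus $w(Y)L_{01}(\cdot)$, which is immaterial to the claim $\lambda_{\min}\geq\rho>0$. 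The "bookkeeping" you flag (that $|\btheta_j^T\bz|<\sigma$, boundedness of $\sqrt{s}\,\delta^{\beta-1}$, and smallness of $a$ can be arranged simultaneously) is exactly what the paper secures through the sample-size condition of Theorem \ref{theorem_minimax}, so no gap remains.
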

\begin{proof}
	See Section \ref{proof_lemma_sparse_mini} for a detailed proof.
\end{proof}

To be specific, Lemma \ref{lemma_holder_verify} implies that the constructed probability measure $P_j$ satisfies  smoothness condition in Assumption \ref{assum_density}, and Lemma \ref{lemma_sparse_mini} shows that the minimizer of $R_{P_j}(\btheta)$ exists, is unique and sparse by the construction of $\bomega_j$. Thus, our hypotheses satisfy
	\[
	\cH = \Big \{ (\btheta_j,P_j): j = 0,\dotso,|\cH'| \Big \}\subset \Theta_{\beta,s}.
	\]

	\textbf{Step 2:}
	In the sequel, we check the two conditions for the second step.\\
	\textbf{Condition (A):}\\
	Under the distributional assumption, we have
	\beq
	&KL(\PP_j,\PP_0)\\ \stackrel{(1)}{=}&\frac{n}{2}\int\int \bigg[
	g(x - 2\sigma) + L\delta^\beta R(\frac{x - x_{j,\bz}}{\delta})\frac{\bomega_j^T\bz}{\sqrt{s}}
	\bigg]\log\bigg(1 +
	\frac{ L\delta^\beta R(\frac{x - x_{j,\bz}}{\delta})\frac{\bomega_j^T\bz}{\sqrt{s}}}{g(x - 2\sigma) }
	\bigg)dxf(\bz)d\bz\\
	\stackrel{(2)}{\leq}&\frac{n}{2}\int\int
	\Big[L\delta^\beta R(\frac{x - x_{j,\bz}}{\delta})\frac{\bomega_j^T\bz}{\sqrt{s}} + \frac{L^2\delta^{2\beta} R^2(\frac{x - x_{j,\bz}}{\delta})\frac{|\bomega_j^T\bz|^2}{s} }{g(x - 2\sigma) }\Big]dxf(\bz)d\bz\\
	\stackrel{(3)}{\leq}&  \frac{n}{2}\int\int \frac{L^2\delta^{2\beta} R^2(\frac{x - x_{j,\bz}}{\delta})\frac{|\bomega_j^T\bz|^2}{s} }{g(x - 2\sigma) }dxf(\bz)d\bz\\		\stackrel{(4)}{\leq}&  \frac{3\delta R^2_{\max} \cdot nL^2\delta^{2\beta}}{2\inf_{(x - x_{j,\bz}/\delta) \in [-1,2]}g(x-2\sigma)} \EE [(\bw_j^T\bZ)^2/s] \\
	\stackrel{(5)}{\leq}& \frac{nL^2R^2_{\max}\sigma}{mt(1)}\delta^{2\beta+1},
	\eeq
	where step (1) follows the definition, step (2) applies the inequality $\log(1+x)\leq x$, step (3) follows from $\int R = 0$, step (4) follows from $\int R^2(\frac{x - x_{j,\bz}}{\delta})dx=\delta \int R^2(u)du\leq 3\delta R^2_{\max}$, and for (5), notice that (\ref{eq_minimax_1}) and (\ref{eq_minimax_3}) together imply that
	\[
	\inf_{(x - x_{j,\bz}/\delta) \in [-1,2]}g(x-2\sigma) \geq g(-3\sigma) - \norm{\tilde{g}_{j,\bz}}_{\infty} \geq \frac 12 g(-3\sigma) = \frac{mt(1)}{2\sigma}.
	\]

	Therefore by choosing a proper constant $\tilde{c}$ in (\ref{eq_minimax_2}) that is independent of $d,n$ and $s$, we can ensure that
	\beq
	KL(\PP_j,\PP_0)\leq \gamma c's\log (d/s) \leq \gamma\log |\cH|,
	\eeq
	where the last step follows from (\ref{eqGilbert}) which implies that condition (A) holds.
	
	\textbf{Condition (B):}\\
	By Lemma \ref{lemma_sparse_mini}, for all $j \neq 0$, we have
	\beq
	\norm{\btheta_0 - \btheta_j}_2 &= c\delta^{\beta}\norm{\bomega_j}_2/\sqrt{s} = c\delta^{\beta},\\
	\norm{\btheta_0 - \btheta_j}_1 &= c\delta^{\beta}\norm{\bomega_j}_1/\sqrt{s} = c\sqrt{s}\delta^{\beta}.
	\eeq
	And for all $j,k \neq 0$, we have
	\beq
	\norm{\btheta_j - \btheta_k}_2 &= c\delta^{\beta}\norm{\bomega_j - \bomega_k}_2/\sqrt{s} \geq  c\delta^{\beta},\\
	\norm{\btheta_j - \btheta_k}_1 &= c\delta^{\beta}\norm{\bomega_j - \bomega_k}_1/\sqrt{s} \geq  c\sqrt{s}\delta^{\beta}.
	\eeq

	Therefore Condition (B) holds. This proof is completed by applying Theorem 2.7 of \cite{tsybakovintroduction}.
\end{proof}


\section{Simulation studies}\label{simulation}
In this section, we conduct simulation studies to evaluate the finite sample performance of the proposed approach. We consider the following two classes of models:
\begin{itemize}
	 \item \textbf{Binary response model:}
	We consider $Y = \sign{\tilde{Y}},$ where
	\[\tilde{Y} = X - \btheta^T\bZ + u\]
	$X \in \RR, \bZ \in \RR^d$, and $u$ is a random noise such that $
	\text{Median}(\tilde{Y}|X,\bZ) = X - \btheta^T\bZ.$
	\item \textbf{Conditional mean model:} We consider $Y \in \{-1,1 \}$, $\bZ \in \RR^d$, and
	\[
	X= \mu Y + \btheta^T\bZ + u,
	\]
	where $u\independent Y,\bZ$ is a random noise and $\mu > 0$ is a constant.
\end{itemize}
Note that for both models, it can be shown that the parameter $\btheta$ coincides with the estimand in (\ref{MCID_linear}) with equal weights. Classical models such as logistic and probit regression belong to the class of binary response model by setting $u$ to follow logistic or Gaussian distribution independent with $(X,\bZ)$, respectively. For simplicity, we choose $u$ to be Gaussian to demonstrate the numerical performance.

Under each model, we simulate i.i.d. samples with sample size $n = 2000$ and dimension $d = 2500$. We set the sparsity $s = \sqrt{d} = 50$ and generate $\btheta^*$ by setting the first fifty coordinates to be $1$, and then normalize it such that $\norm{\btheta^*}_2 = 1$. For binary response model, we generate $X\sim N(0,1)$, $\bZ \sim N_d(0,\bI)$, and $u \sim N(0,(0.1)^2)$. For conditional mean model, we generate $Y \sim \text{Uniform}(\{-1,1\})$, $\bZ \sim N_d(0,\bI)$, $u \sim N(0,(0.1)^2)$ and set $\mu = 2$.
To apply the path-following algorithm, we set the bandwidth parameter $\delta = 1$ and
use the standard Gaussian density as the kernel function $K$. We fix the number of regularization stages $N = 10$ and choose $\nu = 1/4$, $\phi = \big( \lambda_{tgt}/\lambda_0\big )^{1/N}$ and $\eta = 1$  for Algorithm \ref{pf_algo}. For all scenarios, the tuning parameter $\lambda_{tgt}$ is chosen by 5-fold cross-validation based on the surrogate loss with Gaussian kernel following the ``one standard error rule'', i.e., we pick  $\lambda_{tgt}$ as the largest $\lambda$ over a grid that is within one standard error of the minimum cross validation error.

\begin{figure}[ht]
	\centering
	{\subfigure[]{\label{fig:a}\includegraphics[width=70mm,height = 50mm]{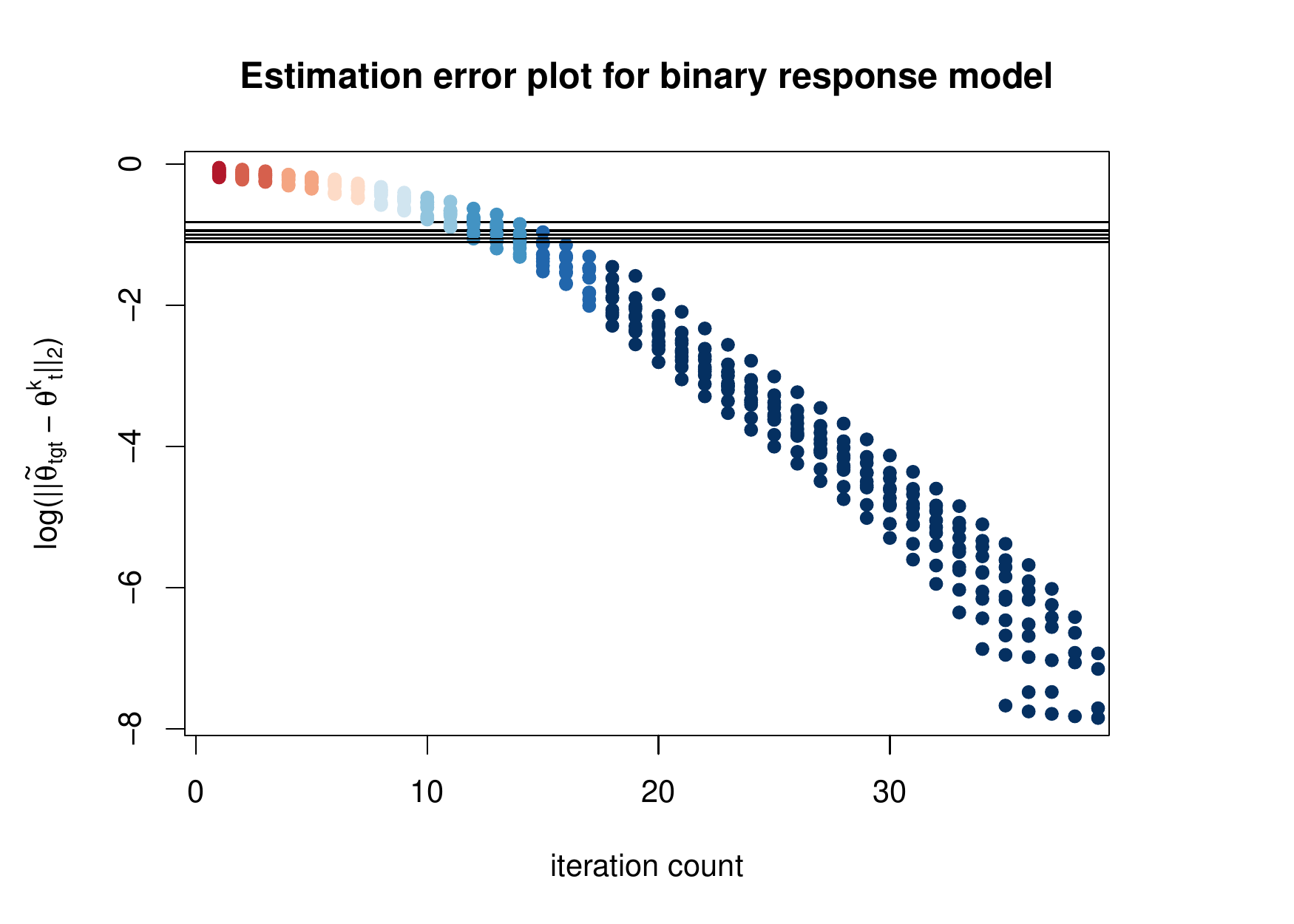}}
		\subfigure[]{\label{fig:b}\includegraphics[width=70mm,height = 50mm]{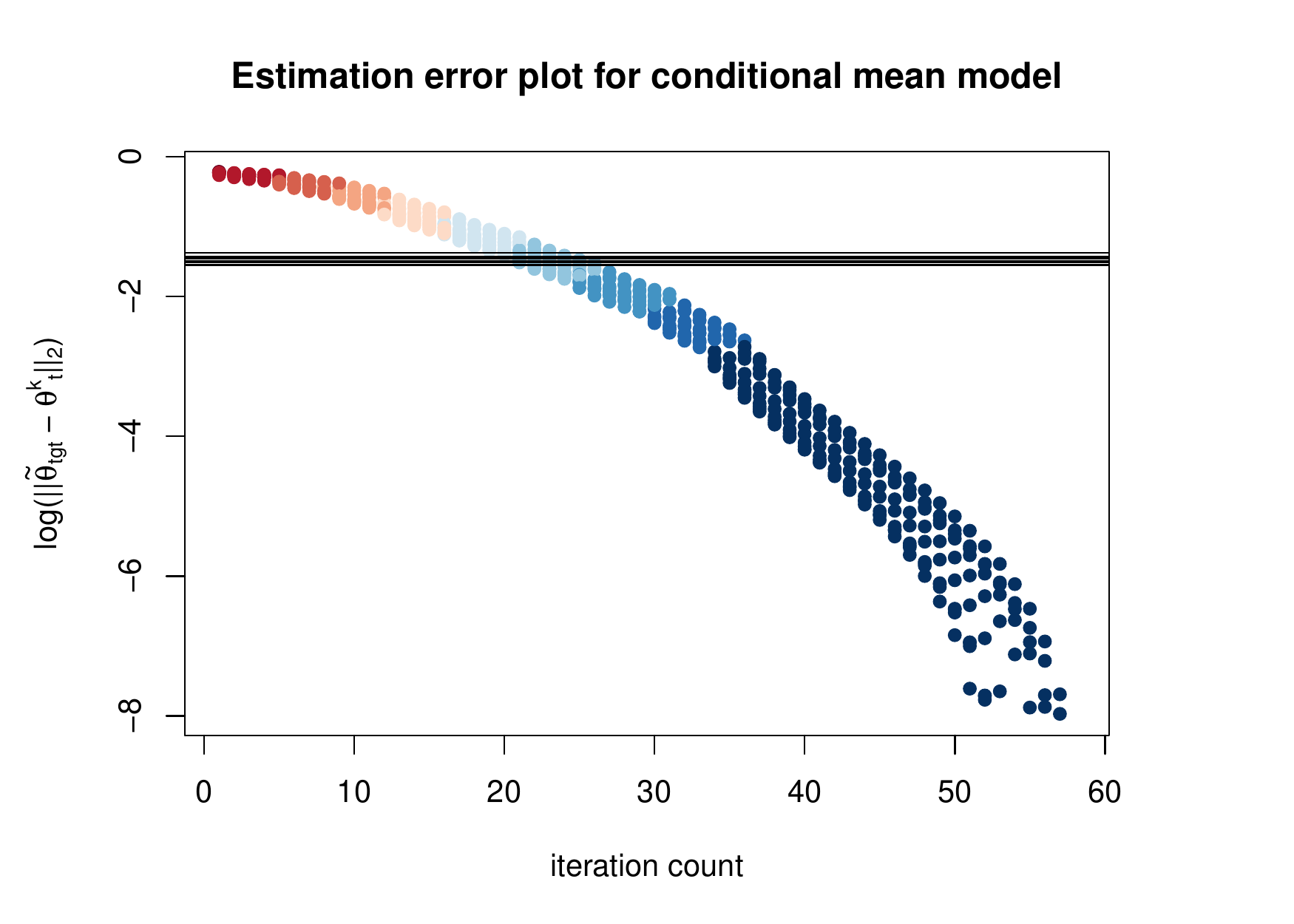}}}
	\caption{The optimization and statistical estimation error against the number of proximal
		-gradient iterates. The black horizontal lines represent the statistical estimation error $\log(\norm{\tilde{\btheta}_{tgt} - \btheta^*}_2)$, and the colored dots depict the optimization error $\log(\norm{\tilde{\btheta}_{tgt} - \btheta^k_t}_2)$ along the regularization path, where $\tilde{\btheta}_{tgt}$ is the estimator calculated by the path-following algorithm. 
	}   \label{esti_error}
\end{figure}

\begin{figure}[ht]
	\centering
	{\subfigure[]{\includegraphics[width=70mm,height = 50mm]{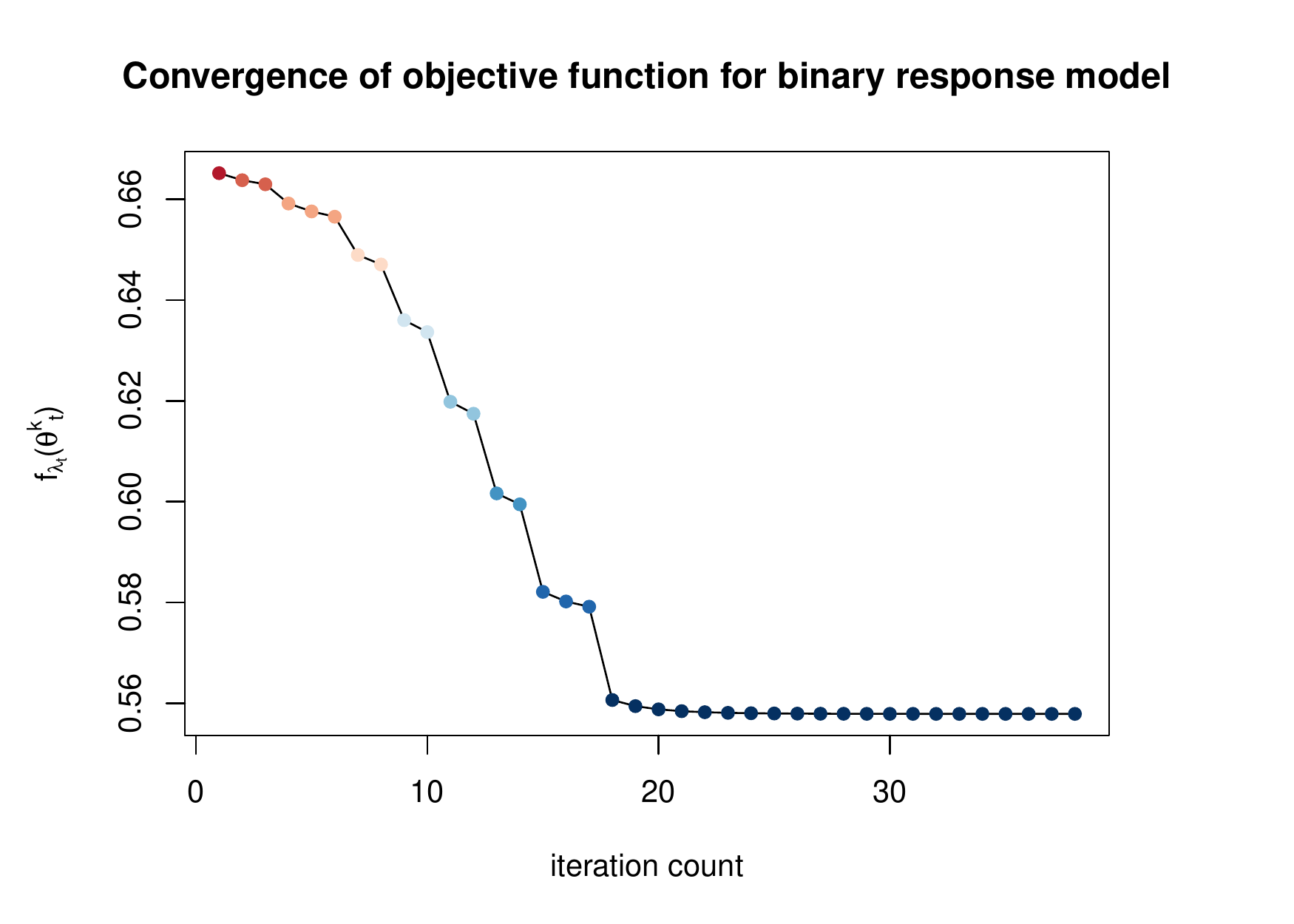}}
		\subfigure[]{\includegraphics[width=70mm,height = 50mm]{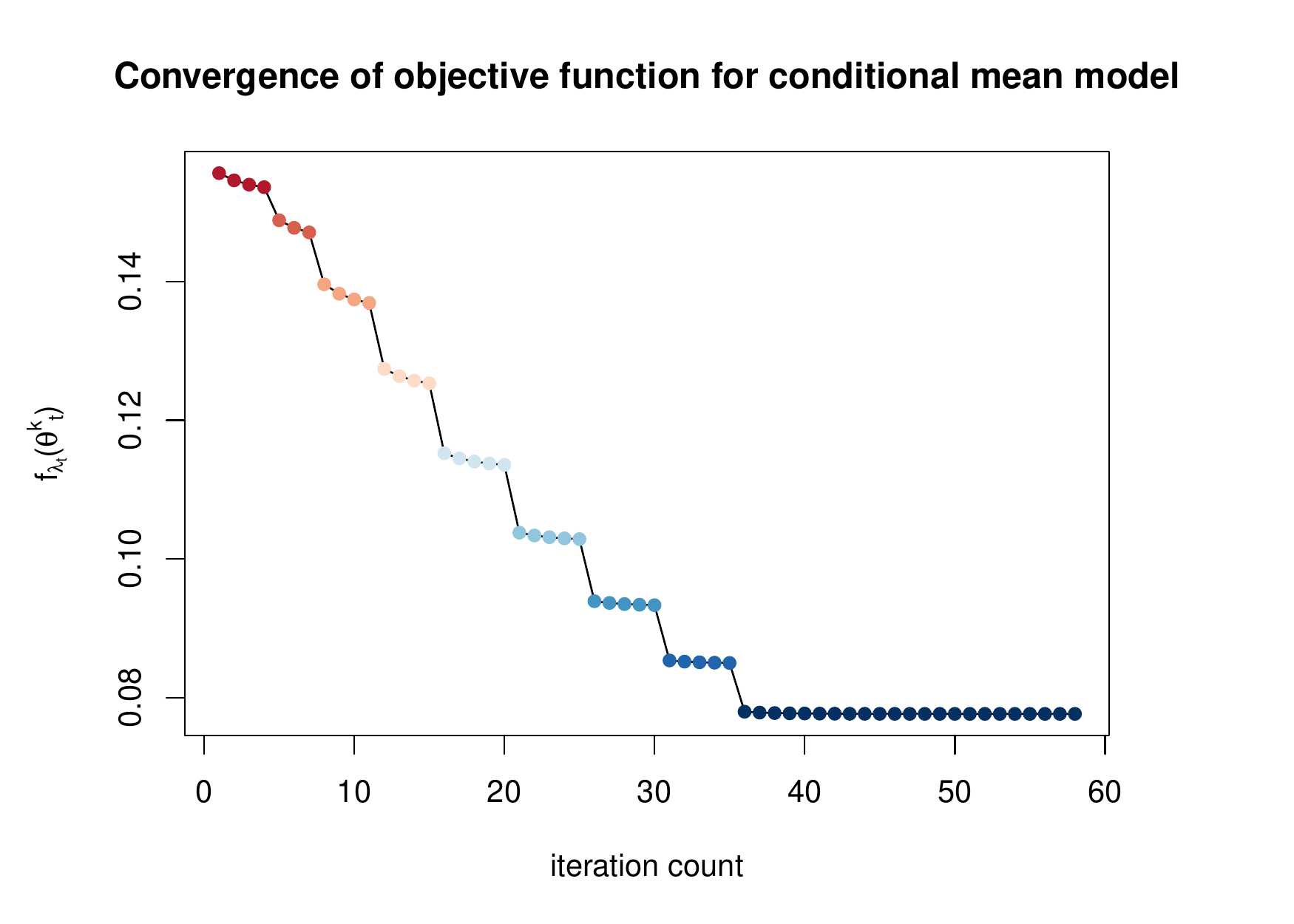}}}
	\vskip \baselineskip
	{\subfigure[]{\includegraphics[width=70mm,height = 50mm]{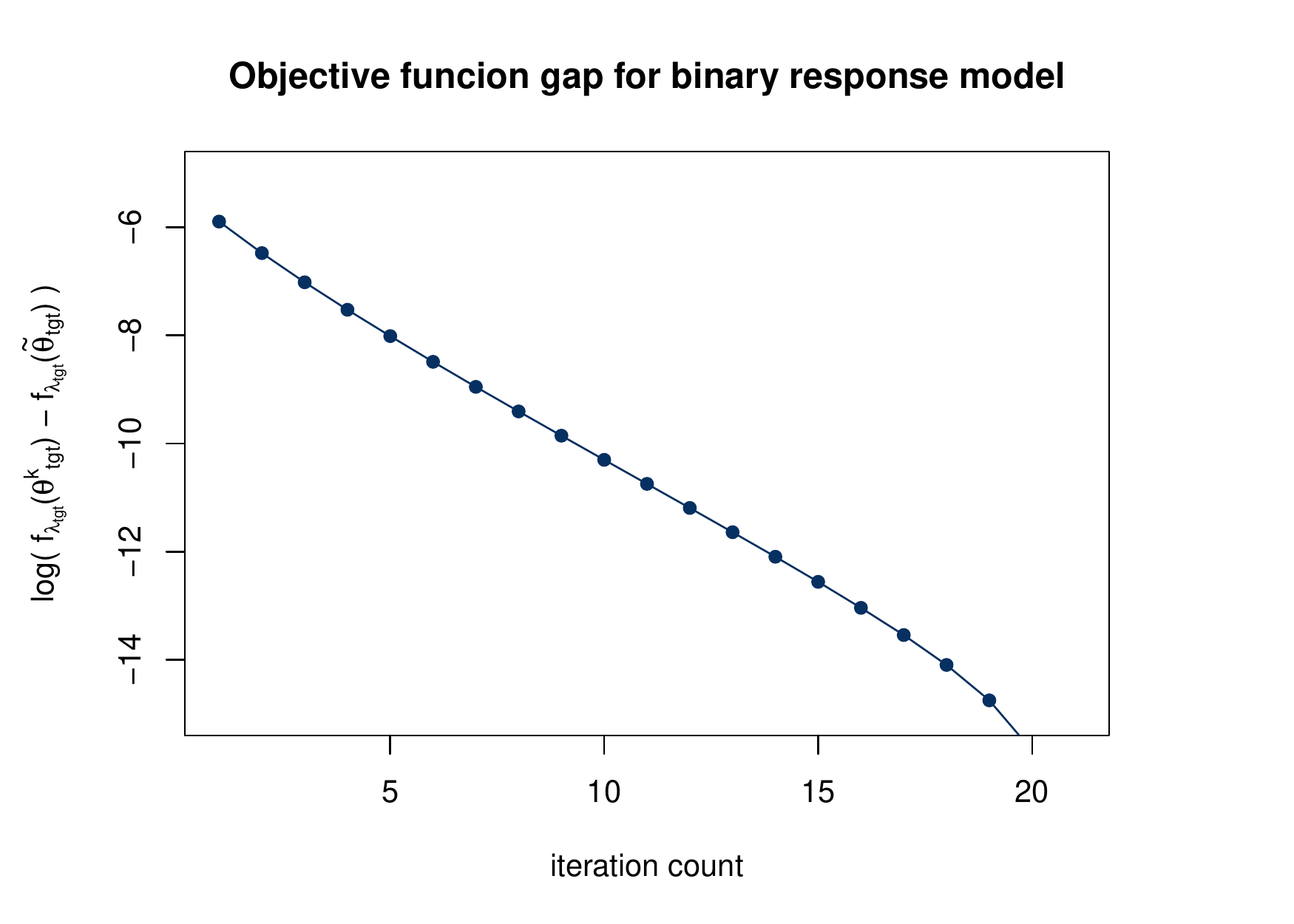}}
		\subfigure[]{\includegraphics[width=70mm,height = 50mm]{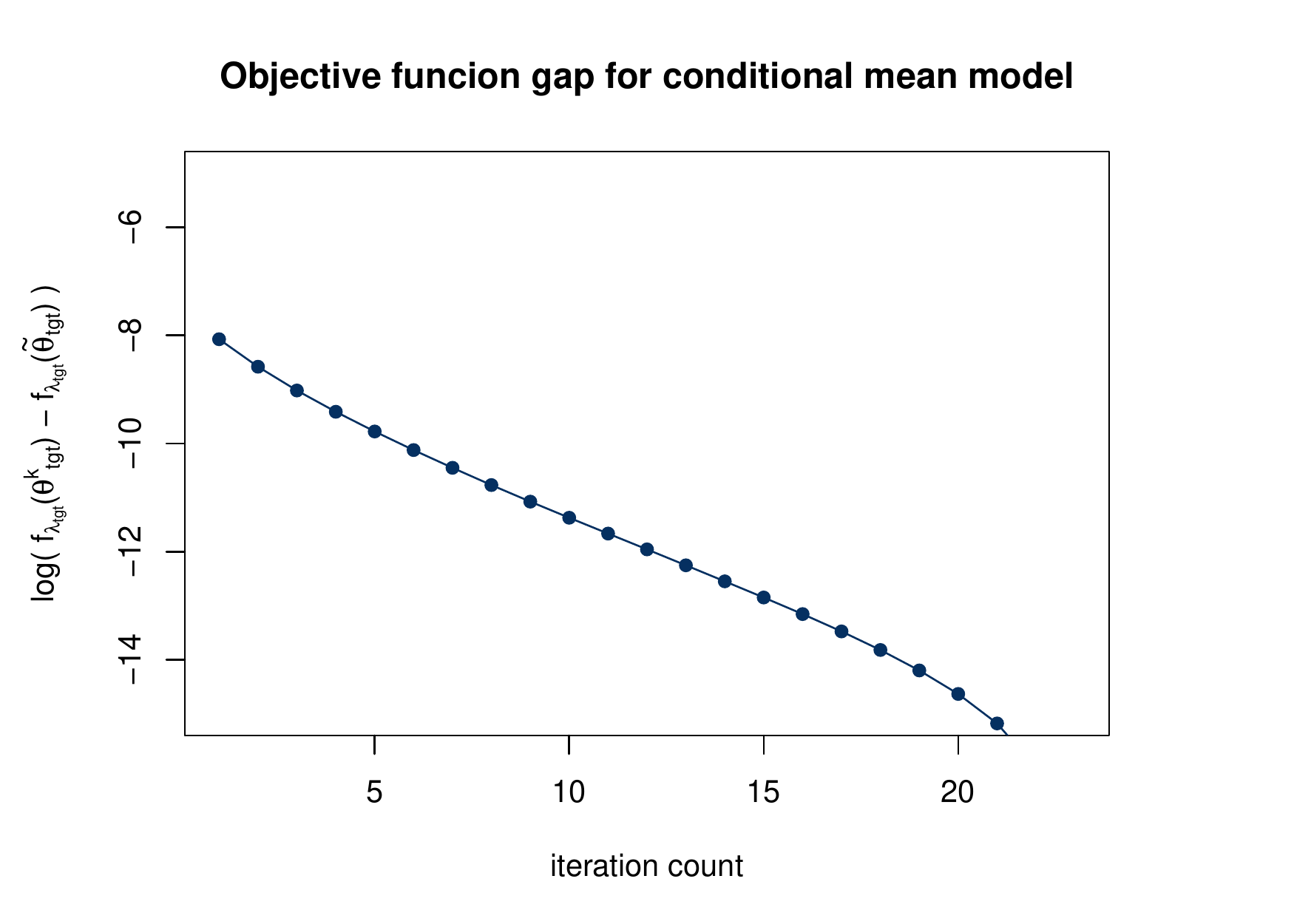}}}
	\caption{Panel (a) and (b): The objective function value of a single regularization path. Each color depicts a single regularization stage. Panel (c) and (d): The objective function value gap $\log(f_{\lambda_{tgt}}(\btheta_{tgt}^k) - f_{\lambda_{tgt}}(\tilde{\btheta}_{tgt}) )$ for the last regularization stage.
	}   \label{func_convergence}
\end{figure}

Figure \ref{esti_error} shows the statistical and optimization error for both models, where each plot shows the results from 10 random generated sets of samples. In each plot, the black horizontal lines depict the final statistical estimation error, measured by $\log(\norm{\tilde{\btheta}_{tgt} - \btheta^*}_2)$. The colored dots depict the optimization error for each run of proximal-gradient method, represented by $\log(\norm{\tilde{\btheta}_{tgt} - \btheta^k_t}_2)$, where each color corresponds to a single stage. These plots illustrate fast convergence of the path-following algorithm on a log scale for both cases considered.

Figure \ref{func_convergence} shows the pattern of the decreasing objective function values along a single regularization path for each of the two models, where each color also denotes a single stage $t$. From panel (a) and (b), we can see that the objective function is monotone decreasing, which validates the results in Proposition \ref{prop_key}. In addition, panel (c) and (d) depict the linear convergence of the objective function value in log scale, as proved in Theorem \ref{theorem_main2}.

In the next experiment, we compare the estimation performance of the proposed method with $\ell_1$ penalized logistic regression  and $\ell_1$ penalized support vector machine (SVM), under both conditional mean model and binary response model. For both models, we consider low-dimensional case ($d = 64$, $s = 8$) and high-dimensional case ($d = 2500$, $s = 50$) with sample size $n = 2000$. We apply the same data generating procedure as in the previous experiment for conditional mean model. However, under the binary response model, for a direct comparison between  logistic regression and the proposed method, we apply the same data generating procedure for $X$ and $\bZ$, except that we simulate the noise $u$ from standard logistic distribution. In this case, the underlying true model is correctly specified by logistic regression.  The simulation is repeated 100 times.

\begin{table}[ht]
	\caption{Comparison of estimation error for conditional mean model and binary response model with logistic noise among the proposed method, $\ell_1$ penalized logistic regression (Logit) and $\ell_1$ penalized SVM. The number in the parentheses are standard deviations. }\label{table_error}
	\footnotesize
	\begin{center}
		\begin{tabular}{ c c c c c c c c}
			\hline
			Error&$d$&\multicolumn{3}{c}{Conditional mean model }&\multicolumn{3}{c}{Logistic model}  \\
			&& Proposed & Logit & SVM & Proposed & Logit & SVM\\
			\hline
			\multirow{2}{*}{$\ell_1$}&64&0.244 (0.047)&0.795 (0.028)&0.908 (0.335)&0.750 (0.210) &1.031 (0.254)    &1.001 (0.323) \\
			&2500&2.026 (0.137)&5.076 (0.241)&8.382 (2.465)&8.116 (0.503)&7.794 (0.911)& 9.270 (4.895)\\
			\hline
			\multirow{2}{*}{$\ell_2$}&64&0.076 (0.012)&0.289 (0.017)&0.236 (0.058) &0.269 (0.061)  &0.255 (0.044)    & 0.335 (0.085)\\
			&2500&0.233 (0.017)&0.755 (0.030)&0.796 (0.071)& 0.869 (0.033) & 0.816 (0.030) &0.972 (0.075)  \\
			\hline
			\multirow{2}{*}{$\ell_\infty$}&64&0.026 (0.004)&0.115 (0.007)&0.123 (0.040)&0.170 (0.068) &0.132 (0.033)    &0.136 (0.040)  \\
			&2500&0.058 (0.006)&0.141 (0.002)&0.141 (0.001)&0.155 (0.021)&0.143 (0.004)& 0.144 (0.007) \\
						\hline
		\end{tabular}
	\end{center}	
\end{table}

Table \ref{table_error} shows the statistical error defined as $\norm{\hat{\btheta} - \btheta^*}$ in $\ell_1$, $\ell_2$ and $\ell_\infty$ norm, respectively, where $\hat{\btheta}$ is the estimator from different methods. The tuning parameters in these three methods are chosen via the corresponding 5-fold cross validation. Note that for logistic regression and SVM, we firstly fit the model using both $X$ and $\bZ$ as predictors, and then rescale the coefficients so that the coefficient corresponding to $X$ is 1. The results suggest that under the conditional mean model, the proposed method outperforms both logistic regression and SVM by a large factor. When the underlying true model is truly logistic model, the performance of all these methods are similar. This experiment validates the statistical performance of the proposed estimator.


\section{Real data application}\label{real_data}

In this section we apply the proposed method to a dataset on the ChAMP (Chondral Lesions And Meniscus Procedures) study \citep{bisson2017patient}.
This study is a double-blinded randomized controlled trial on patients undergoing arthroscopic partial meniscectomy (APM), a knee surgery for meniscal tears. This dataset contains information about the basic demographic information as well as preoperative and postoperative outcome measures, including Short Form-36 (SF-36) health survey, Western Ontario and McMaster Universities Osteoarthritis Index (WOMAC) and Knee Injury and Osteoarthritis Outcome Score (KOOS).
The primary measurement for the success of APM is the WOMAC pain score, where a higher score indicates a better outcome. In particular, the difference from WOMAC pain score before the treatment to that one year after the surgery, denoted by  $\text{WOMAC}_{\text{post}} - \text{WOMAC}_{\text{pre}}$, serves as a primary diagnostic measure in practice.
Meanwhile, clinicians tend to find some alternatives to evaluate the clinical significance through other venues. One method is based upon the so-called anchor question. In particular, one question in the SF-36 survey about how the patient feels about his/her pain would be asked and a binary variable $Y$ can be obtained from this patient reported outcome.
Let $Y_i = 1$ denote that the $i$th patient is healthy/satisfactory  and $-1$ otherwise. Let $X_i = \text{WOMAC}_{\text{post},i} - \text{WOMAC}_{\text{pre},i}$ be the pain score difference for this patient and $\bZ_i$ be additional demographic statistics and clinical biomarkers.

The primary goal in our application is to determine the minimal clinically important difference, defined as a linear combination of the variable $\bZ$, $\btheta^T\bZ$, such that the treatment of debridement of chondral lesions during the surgery can be claimed as clinically significant/successful by comparing the WOMAC pain score change $X$ with this individualized cut point. This determination includes not only the selection of active variables (with non-zero coefficients) but also the estimation of all of these non-zero coefficients.
This can be formulated as an estimation problem exactly of the form (\ref{MCID_linear}) or (\ref{MCID_linear2}) with weights $w(y) = 1/\PP(Y = y)$, namely, our goal is to estimate
\[
\btheta^{*} = \argmin_{\btheta}\left\{\PP_P(X\geq \btheta^T\bZ|Y =-1) + \PP_P(X<  \btheta^T\bZ|Y =1)\right\}.
\]

After removing redundant features and observations (patients) who ceased to participate during the follow-up period, the final dataset contains $n = 138$ observations and $d = 160$ measurements apart from $X$. We apply the proposed method using the Gaussian kernel and set the bandwidth $\delta = 1$. For comparison, we also apply $\ell_1$ penalized logistic regression and $\ell_1$ penalized SVM on the same dataset. Note that for logistic regression and SVM, we do not enforce $X$ to be active but instead treat it as one of the covariates. We also flip the sign for covairates $\bZ$ in logistic regression and SVM for better comparison. Since the fitted values of coefficients from these methods are not directly comparable, we focus on the regularization path for each method by starting from a large regularization parameter and decreasing it gradually.

\begin{table}[ht]
	\caption{First five active variables in the proposed method, logistic regression and SVM}\label{table_top5}
	\small
	\begin{center}
		\begin{tabular}{c c c c c c }
			\hline
			Method &First& Second&Third&Fourth&Fifth\\
			Proposed&WFunc\_6mo &KSymp\_3mo &exten\_inj\_pre &KPain\_3mo &SF36Soc\_6mo\\
			Logit&KSymp\_3mo &WFunc\_6mo &X &PatellaCenLes &SF36Soc\_6mo \\
			SVM &X\_yrComplete &FemurMedLes &NormXray &effusion\_inj\_3mo &KSymp\_3mo\\
			\hline
		\end{tabular}
	\end{center}	
\end{table}

\begin{figure}[ht]
	\centering
	{\subfigure[]{\includegraphics[width=70mm,height = 70mm]{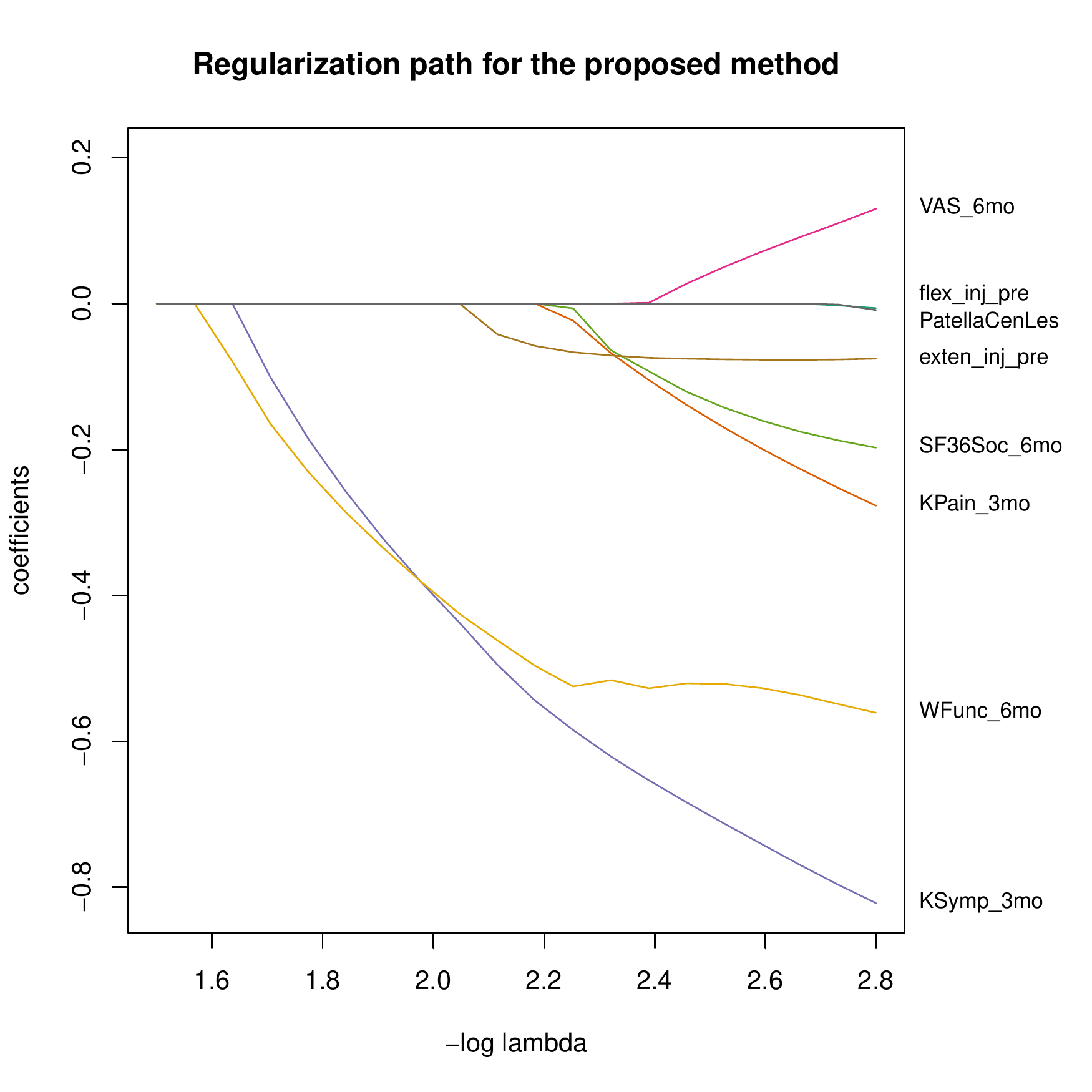}}
		\subfigure[]{\includegraphics[width=70mm,height = 70mm]{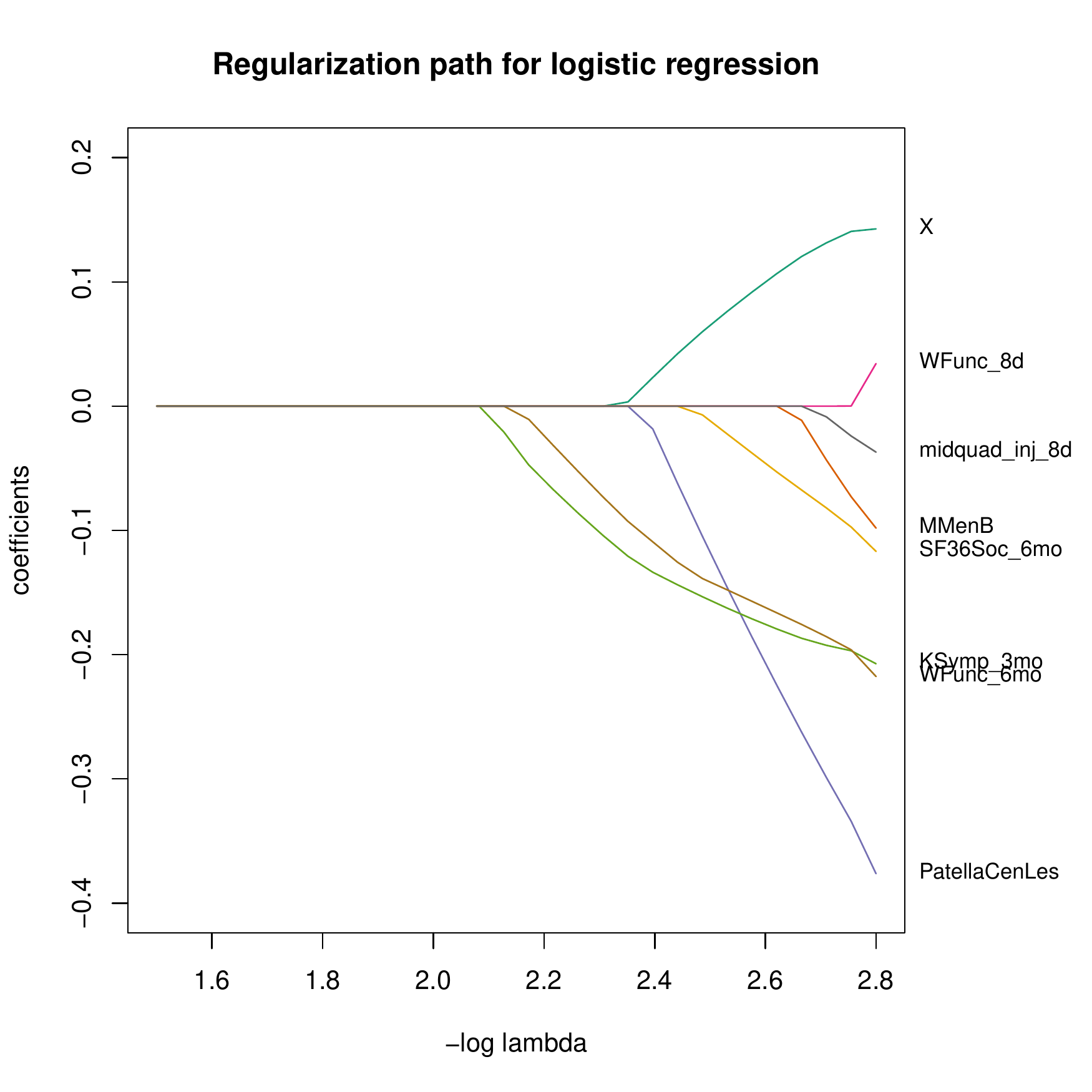}}}
	\caption{The regularization path for the proposed method and logistic regression on the ChAMP Trial dataset.
	}   \label{path_champ}
\end{figure}

Table \ref{table_top5} shows the first five variables that become active (have non-zero coefficient) along the regularization path. For all these methods, the variable \textit{KSymp\_3mo} (the KOOS score for other symptoms at 3-month) becomes active at early stages of the regularization path. In addition, the result shows that logistic regression and the proposed method yield very similar output, where the majority of the variables (\textit{Wfunc\_6mo}, \textit{KSymp\_3mo} and \textit{SF36Soc\_6mo}, with the interpretations the WOMAC score for physical functioning of the joints at 6-month, the KOOS score for other symptoms at 3-month and the SF-36 score for social role functioning at 6-month, respectively) are overlapped. In particular, the main measurement $X$, WOMAC pain score difference, treated as a covariate, is among the first five active variables in logistic regression. This validates the clinical practice of using $X$ as a main marker for patient recovery. 

To further compare the outputs from the proposed method and logistic regression, Figure \ref{path_champ} shows the regularization path for both methods. We find that in logistic regression variable $X$  has positive sign and most of the other active variables have negative sign. This is indeed expected because all additional covariates are encoded such that a higher value indicates a better health condition. Thus, for the patient with a higher intermediate health condition, the individualized threshold for being identified as recovered/healthy becomes lower, which is clinically more meaningful than using a common threshold for all patients. We defer the regularization path for SVM and further discussion to the supplementary material.


\section{Discussion}\label{discussion}
This paper studies the problem of estimating a sparse functional $\btheta^*(P)$ of the underlying probability measure $P = P(X,Y,\bZ)$ of the form (\ref{MCID_linear}) under the high-dimensional and model-free regime.
The absence of explicit modeling assumptions for the dependence of the response $Y$ on $X,\bZ$ distinguishes this paper with most existing works. To estimate $\btheta^*$, we propose a smoothed surrogate loss function that is asymptotically Fisher consistent. Based on the surrogate loss function, we estimate $\btheta^*$ by minimizing a penalized smoothed empirical risk function. Compared to existing works on high-dimensional estimation, the statistical performance of the proposed estimator depends additionally on the smoothness of the underlying distribution, due to the approximation bias. We prove that the estimator achieves the rate $\big(s(\log d)/n\big)^{\beta/(2\beta + 1)}$ for the $\ell_2$ estimation error and the rate is shown to be minimax optimal up to a logarithmic factor. From the computational perspective, we develop a path-following algorithm for the proposed estimator which achieves the optimal statistical performance with geometric rate of convergence in optimization.


Our work can be extended in several directions. First, this work can be extended to the case where the high-dimensional covariates $\bZ$ have more complex structures, for instance group sparsity. Second, in many practical situations such as clinical diagnostics, it is of great interest to quantify the uncertainty of $\hat\btheta$ and more importantly the estimated threshold $\bZ^T\hat\btheta$. How to perform statistical inference based on the proposed framework is currently under investigation. Third, when the response $Y$ has more than two groups, it might take extra effort for a more involved theoretical analysis as well as a new algorithm to compute the estimator.



\appendix

\section{Proofs}

\subsection{Proof of Proposition \ref{prop_var}}\label{proof_prop_var}
We firstly prove the following lemma:
\begin{lemma}\label{lemma_variance}
	Under Assumption \ref{assum_propensity} - \ref{assum_density}, if we choose a proper kernel $K$, then we have for all $j = 1\dotso,d$
	\[
	\EE\bigg[ \bigg(\frac{YZ_j}{\delta} K(\frac{Y(X - \btheta^T\bZ)}{\delta})\bigg)^2 \bigg] \leq \frac{C}{\delta},
	\]
	where $C =M_2p_{\max} \int K^2(u) du$ is a constant.
\end{lemma}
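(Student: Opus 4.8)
The plan is to condition on $(Y,\bZ)$, perform a change of variables in the inner integral over $X$, and then use the two parts of Assumption \ref{assum_bounded} and \ref{assum_density}. First I would observe that since $Y^2=1$,
\[
\bigg(\frac{YZ_j}{\delta} K\Big(\tfrac{Y(X - \btheta^T\bZ)}{\delta}\Big)\bigg)^2 = \frac{Z_j^2}{\delta^2}\, K^2\Big(\tfrac{Y(X - \btheta^T\bZ)}{\delta}\Big),
\]
so by the tower property it suffices to bound $\EE\big[K^2(\tfrac{Y(X-\btheta^T\bZ)}{\delta})\,\big|\,Y=y,\bZ=\bz\big]$ for fixed $y\in\{-1,1\}$ and $\bz$.

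Next I would write this conditional expectation as $\int K^2\big(\tfrac{y(x-\btheta^T\bz)}{\delta}\big) f(x|y,\bz)\,dx$ and substitute $u = \tfrac{y(x-\btheta^T\bz)}{\delta}$, i.e. $x = \btheta^T\bz + \delta y u$ (using $1/y=y$), with $dx = \delta y\,du$. When $y=1$ this is immediate; when $y=-1$ the orientation of the integral flips, and the accompanying sign restores it, so in either case
\[
\int K^2\Big(\tfrac{y(x-\btheta^T\bz)}{\delta}\Big) f(x|y,\bz)\,dx = \delta \int K^2(u)\, f(\btheta^T\bz + \delta y u \mid y,\bz)\,du \leq \delta\, p_{\max} \int K^2(u)\,du,
\]
where the last inequality uses the uniform upper bound $\sup_{x,y,\bz} f(x|y,\bz) < p_{\max}$ from Assumption \ref{assum_density} and $\int K^2 < \infty$ from the proper-kernel definition. (One may equivalently avoid the sign bookkeeping by invoking the symmetry $K(t)=K(-t)$.)

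Finally I would combine these bounds: taking expectation over $(Y,\bZ)$,
\[
\EE\bigg[\bigg(\frac{YZ_j}{\delta} K\Big(\tfrac{Y(X - \btheta^T\bZ)}{\delta}\Big)\bigg)^2\bigg] \leq \frac{p_{\max}\int K^2(u)\,du}{\delta}\, \EE[Z_j^2],
\]
and then bound $\EE[Z_j^2] = \PP(Y=1)\EE[Z_j^2\mid Y=1] + \PP(Y=-1)\EE[Z_j^2\mid Y=-1] \leq M_2$ using the second moment condition in Assumption \ref{assum_bounded}. This gives the claim with $C = M_2 p_{\max}\int K^2(u)\,du$. There is essentially no hard step here; the only point requiring a little care is keeping track of the Jacobian sign in the change of variables when $Y=-1$, which the symmetry of the kernel resolves cleanly.
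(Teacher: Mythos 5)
Your proposal is correct and follows essentially the same route as the paper's proof: condition on $Y$ (and $\bZ$), change variables $u=y(x-\btheta^T\bz)/\delta$ in the inner integral to pick up a factor of $\delta$, bound the conditional density by $p_{\max}$, and finish with the conditional second-moment bound $\EE[Z_j^2\mid Y=y]\leq M_2$. No gaps.
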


\begin{proof}\label{proof_lemma_variance}
	By definition, we have
	\beq\label{thom_vari_eq1}
	&\EE\bigg[ \bigg(\frac{YZ_j}{\delta} K(\frac{Y(X - \btheta^T\bZ)}{\delta})\bigg)^2 \bigg]\\ &=
	\pi \EE\bigg[ \bigg(\frac{Z_j}{\delta} K(\frac{X - \btheta^T\bZ}{\delta})\bigg)^2 \bigg| Y = 1\bigg] +
	(1 - \pi)\EE\bigg[ \bigg(\frac{Z_j}{\delta} K(\frac{X - \btheta^T\bZ}{\delta})\bigg)^2 \bigg| Y = -1\bigg].
	\eeq
	
	Here we bound the first term on the RHS and the second term follows similarly. With some algebra we obtain
	\beq
	\EE\bigg[ \bigg(\frac{Z_j}{\delta} K(\frac{X - \btheta^T\bZ}{\delta})\bigg)^2 \bigg| Y = 1\bigg] &=
	\int_Z \frac{z_j^2}{\delta^2}\underbrace{\int K^2(\frac{x - \btheta^T\bz}{\delta}) f(x|\bz,Y = 1) dx}_{(B)} f(\bz|Y = 1)d\bz.
	\eeq
	With some change of variable, $(B)$ can be expressed as
	\beq
	(B) &=\delta \int K^2(u) f(u\delta + \btheta^T\bz|\bz,Y = 1) du\\
	&\leq \delta p_{\max} \int K^2(u) du,
	\eeq
	where the inequality follows from Assumption \ref{assum_density}.
	By Assumption \ref{assum_bounded}, we obtain
	\beq
	\EE\bigg[ \bigg(\frac{Z_j}{\delta} K(\frac{X - \btheta^T\bZ}{\delta})\bigg)^2 \bigg| Y = 1\bigg] \leq M_2p_{\max} \int K^2(u) du\cdot \frac 1\delta.
	\eeq
	With the same derivation for the second term in (\ref{thom_vari_eq1}), we conclude that
	\beq
	\EE\bigg[ \bigg(\frac{YZ_j}{\delta} K(\frac{Y(X - \btheta^T\bZ)}{\delta})\bigg)^2 \bigg] \leq \frac{C}{\delta},
	\eeq
	where $C =M_2p_{\max} \int K^2(u) du.$
\end{proof}

Now we are ready to prove Proposition \ref{prop_var}.

\begin{proof}
    Denote $T = \nabla R_\delta^n(\btheta^*) - \nabla R_\delta(\btheta^*)$.  Here by definition
	\beq
	\norm{T}_\infty = \bignorm{\frac1n\sum_{i = 1}^{n} w(y_i)\frac{y_i\bz_i}{\delta}K(\frac{y_i(x_i - \btheta^T\bz_i)}{\delta})  -
		\EE\bigg[ w(Y)\frac{YZ}{\delta} K(\frac{Y(X - \btheta^T\bZ)}{\delta}) \bigg] }_\infty.
	\eeq
	Writing $\pi_{\max} = \max_y w(y)$, which is bounded by a constant by Assumption \ref{assum_propensity}, we have
	\beq
	    |T_{ij}| &= \bigg|w(y_i)\frac{y_i\bz_{ij}}{\delta}K(\frac{y_i(x_i - \btheta^T\bz_i)}{\delta})  -
	\EE\bigg[ w(Y)\frac{YZ_j}{\delta} K(\frac{Y(X - \btheta^T\bZ)}{\delta}) \bigg]\bigg|\\
	&\leq 2\pi_{\max}K_{\max}M_n/\delta,
	\eeq
	and
	\beq
	    \EE [T_{ij}^2] \leq \pi_{\max}^2 C/\delta,
	\eeq
	where $C$ is the constant defined in Lemma \ref{lemma_variance}.
	Applying Bernstein inequality gives us
	\beq
	    \PP(\norm{T}_{\infty} > t) &\leq \sum_{j = 1}^{d}\PP(|T_j| > t)\\
	    &\leq 2d \exp\bigg(
	    -\frac{\frac12t^2n}{\pi_{\max}^2 C/\delta + \frac{t}{3}2\pi_{\max}K_{\max}M_n/\delta  }
	    \bigg ).
	\eeq
	Note that $M_n \lesssim \sqrt{\frac{\log d}{n\delta}}$. Thus by taking $t = C_1\sqrt{\frac{n\delta}{\log d}}$ for some constant $C_1$ sufficiently large, we can ensure that
	\beq
	\PP(\norm{T}_{\infty} > t) \leq 2d^{-1}.
	\eeq
	This completes the proof.
\end{proof}

\subsection{Proof of Proposition \ref{prop_bias}}
\begin{proof}\label{proof_prop_bias}
	By definition, we have
	\beq\label{thom_bias_e1}
	\nabla R_\delta(\btheta^*) - \nabla R(\btheta^*) &= \bigg(\EE\bigg[\frac{\bZ}{\delta} K(\frac{X - \btheta^{*T}\bZ}{\delta})\bigg|Y = 1\bigg] -  \int_{\bZ} \bz f(\btheta^{*T}\bz,\bz|Y = 1)d\bz\bigg) \\
	&-  \bigg(\EE\bigg[\frac{\bZ}{\delta} K(\frac{ \btheta^{*T}\bZ - X}{\delta})\bigg |Y = -1\bigg] - \int_{\bZ} \bz f(\btheta^{*T}\bz,\bz|Y = -1)d\bz\bigg).
	\eeq
	
	Here we focus on the first term, and the property of the second term will be similar. Here the first term can be expressed as
	\beq\label{thom_bias_e2}
	\int_{\bZ} \bz \bigg(\underbrace{\int_X\frac 1\delta   K(\frac{x - \btheta^{*T}\bz}{\delta}) f(x|\bz,Y = 1)dx - f(\btheta^{*T}\bz|\bz,Y = 1)}_{(A)}\bigg) f(\bz|Y =1) d\bz.
	\eeq
	
	With some change of variable $(A)$ becomes
	\beq
	\int K(u)\bigg(f(u\delta + \btheta^{*T}\bz|\bz,Y =1) - f(\btheta^{*T}\bz|\bz,Y =1) \bigg)du.
	\eeq
	
	Since $f(x|y,\bz)$ is $l$ times differentiable, we have by Taylor expansion
	\beq
	f(u\delta + \btheta^{*T}\bz|\bz,Y =1) - f(\btheta^{*T}\bz|\bz,Y =1) &= \sum_{i = 1}^{l-1} \frac{f^{(i)}(\btheta^{*T}\bz|\bz,Y =1)}{i!}(u\delta)^i\\
	&\qquad + \frac{(u\delta)^l}{l!}f^{(l)}(\btheta^{*T}\bz+\tau u\delta|\bz,Y =1),
	\eeq
	for some $\tau \in [0,1]$ where $l = \floor{\beta}.$
	By choosing the kernel of order $l$, $(A)$ becomes
	\beq
	(A)	 
	&= \int K(u) \frac{(u\delta)^l}{l!}\bigg(f^{(l)}(\btheta^{*T}\bz+\tau u\delta|\bz,Y =1) - f^{(l)}(\btheta^{*T}\bz|\bz,Y =1)  \bigg)du,
	\eeq
	and therefore (\ref{thom_bias_e2}) becomes
	\beq
	\int K(u) \frac{(u\delta)^l}{l!} \int_Z \bz \bigg ((f^{(l)}(u\delta + \btheta^{*T}\bz|\bz,Y =1) - f^{(l)}(\btheta^{*T}\bz|\bz,Y =1) \bigg) f(\bz|Y =1) d\bz du,
	\eeq
	and hence for any $\bv$ with $\norm{\bv}_0 \leq s'$
	\beq
	&\bigg | \int K(u) \frac{(u\delta)^l}{l!} \int_Z \bv^T\bz \bigg ((f^{(l)}(u\delta + \btheta^{*T}\bz|\bz,Y =1) - f^{(l)}(\btheta^{*T}\bz|\bz,Y =1) \bigg) f(\bz|Y =1) d\bz du\bigg |\\
	\leq& \int |K(u)| \frac{|u\delta|^l}{l!}L \norm{\bv}_2|u\delta|^{\beta - l}du\\
	\leq& L\norm{\bv}_2 \int K(u) \frac{(u\delta)^\beta}{l!}du,
	\eeq
	where the first inequality follows from Assumption \ref{assum_density}.
	This result follows similarly for the second term in (\ref{thom_bias_e1}), and thus we know for any $\bv \in \RR^d$ with $\norm{\bv}_0 \leq s'$
	
	\beq
	\big | \bv^T(\nabla R_\delta(\btheta^*) - \nabla R(\btheta^*))\big |
	&\leq
	2L\norm{\bv}_2\delta^{\beta} \int |K(u)| \frac{u^\beta}{l!}du.
	\eeq
	Writing $C_2 =2L \int |K(u)| \frac{u^\beta}{l!}du$  completes the proof.
\end{proof}

\subsection{Auxiliary results for the proof of Theorem \ref{theorem_main2}}\label{sec_appen_algo}

\subsubsection{Proof of Lemma \ref{lemma_initialization}}\label{proof_lemma_initialization}
\begin{proof}
	Since $\norm{\btheta_{S^{*c}}}_0 \leq \tilde{s}$ 
	 by applying (\ref{RSC_condi}) in Assumption \ref{RSC}, we can show that
	\beq\label{lm1e1}
	(\btheta - \btheta^*)^T(\nabla R_\delta^n(\btheta) - \nabla R_\delta^n(\btheta^*)) \geq \rho_{-}\norm{\btheta - \btheta^*}^2_2.
	\eeq
	On the other hand, by the definition of $\omega_\lambda(\btheta)$, since $\btheta^* \in \Omega$, suppose $\bxi$ is the subgradient that attains the minimum in (\ref{equ_subopti}), we have 	
	\beq
	\frac{(\btheta - \btheta^*)^T(\nabla R_\delta^n(\btheta) + \lambda\bxi) }{\norm{\btheta - \btheta^*}_1}
	&\leq \max_{\btheta' \in \Omega} \bigg \{ \frac{(\btheta - \btheta')^T}{\norm{\btheta - \btheta'}_1}(\nabla R_\delta^n(\btheta) + \lambda \bxi) \bigg \}\\
	&=w_\lambda(\btheta).
	\eeq
	This implies that
	\beq\label{lm1e2}
	(\btheta - \btheta^*)^T(\nabla R_\delta^n(\btheta) + \lambda\bxi)
	&\leq \norm{\btheta - \btheta^*}_1\omega_\lambda(\btheta)\\
	&\leq \frac12\lambda \norm{\btheta - \btheta^*}_1\\
	&=  \frac12\lambda (\norm{(\btheta - \btheta^*)_{S^*}}_1 + \norm{(\btheta - \btheta^*)_{S^{*c}}}_1).
	\eeq
	
	Combining (\ref{lm1e1}) and (\ref{lm1e2}) gives
	\beq
	 \frac{1}{2}\lambda (\norm{(\btheta - \btheta^*)_{S^*}}_1 + \norm{(\btheta - \btheta^*)_{S^{*c}}}_1) &\geq
	(\btheta - \btheta^*)^T\nabla R_\delta^n(\btheta) + \lambda(\btheta - \btheta^*)^T\bxi\\
	&\geq (\btheta - \btheta^*)^T\nabla R_\delta^n(\btheta^*) +  \rho_{-}\norm{\btheta - \btheta^*}^2_2 + \lambda(\btheta - \btheta^*)^T\bxi.
	\eeq

	Using the fact that
	$$(\btheta - \btheta^*)^T\bxi \geq -\norm{(\btheta - \btheta^*)_{S^*}}_1 + \norm{(\btheta - \btheta^*)_{S^{*c}}}_1,$$
	we further obtain
	\beq\label{lm1e3}
	&\frac{3}{2}\lambda\norm{(\btheta - \btheta^*)_{S^*}}_1 - (\btheta - \btheta^*)^T\nabla R_\delta^n(\btheta^*) \geq \frac{1}{2}\lambda\norm{(\btheta - \btheta^*)_{S^{*c}}}_1 + \rho_{-}\norm{\btheta - \btheta^*}^2_2.
	\eeq
	Notice that Proposition \ref{prop_bias} and the sparsity of $\btheta,\btheta^*$ imply that
	\beq\label{lm1e6}
	\bigg| (\btheta - \btheta^*)^T\nabla R_\delta^n(\btheta^*)\bigg| &= \bigg | (\btheta - \btheta^*)^T\bigg(
	\underbrace{\nabla R_\delta^n(\btheta^*) - \nabla R_\delta(\btheta^*)}_{E_1} + \underbrace{\nabla R_\delta(\btheta^*) - \nabla R(\btheta^*)}_{E_2} \bigg |
	\bigg)\\
	&\leq \norm{\btheta - \btheta^*}_1\norm{E_1}_\infty+ C_2\delta^{\beta}\norm{\btheta - \btheta^*}_2,
	\eeq
	where $C_2$ is the constant defined in Proposition \ref{prop_bias}.
	Plug (\ref{lm1e6}) in (\ref{lm1e3}) gives
	\beq\label{lm1e7}
	\rho_{-}\norm{\btheta - \btheta^*}^2_2 &\leq C_2\delta^{\beta}\norm{\btheta - \btheta^*}_2 + (\frac{3}{2}\lambda + \norm{E_1}_\infty)\norm{(\btheta - \btheta^*)_{S^*}}_1\\ &- (\frac{1}{2}\lambda- \norm{E_1}_\infty)\norm{(\btheta - \btheta^*)_{S^{*c}}}_1.
	\eeq
	
	Now consider two cases:\\
	If $
	\rho_{-}\norm{\btheta - \btheta^*}^2_2 \leq 3C_2\delta^{\beta}\norm{\btheta - \btheta^*}_2
	$, then it holds trivially that
	\beq
	\norm{\btheta - \btheta^*}_2 \leq \frac{3}{\rho_{-}}C_2\delta^{\beta}.
	\eeq
	If $
	\rho_{-}\norm{\btheta - \btheta^*}^2_2 > 3C_2\delta^{\beta}\norm{\btheta - \btheta^*}_2
	$, then we obtain
	\beq
	\frac{2\rho_{-}}{3}\norm{\btheta - \btheta^*}^2_2 &\leq (\frac{3}{2}\lambda+ \norm{E_1}_\infty)\norm{(\btheta - \btheta^*)_{S^*}}_1\\ &- (\frac{1}{2}\lambda - \norm{E_1}_\infty)\norm{(\btheta - \btheta^*)_{S^{*c}}}_1.
	\eeq
	
	The condition of $\lambda_{tgt}$ ensures that $\frac{1}{2}\lambda - \norm{E_1}_\infty \geq 0$, and thus we obtain
	\beq
	\norm{\btheta - \btheta^*}_2 &\leq \frac{3\sqrt{s}(\frac32\lambda + \norm{E_1}_\infty)}{2\rho_{-}}\\
	&\leq \frac{3}{\rho_{-}}\sqrt{s}\lambda.
	\eeq
	
	Combine the above two cases, we conclude that
	\beq\label{eq_A29}
	\norm{\btheta - \btheta^*}_2 \leq \frac{3}{\rho_{-}}\bigg( C_2\delta^{\beta} \lor \sqrt{s}\lambda \bigg).
	\eeq
	
	For $\norm{\btheta - \btheta^*}_1$, define $\gamma = \frac{\frac{3}{2}\lambda + \norm{E_1}_\infty}{ \frac{1}{2}\lambda - \norm{E_1}_\infty}\leq \frac{13}{3}$ and we also consider the following two cases:\\
	If $\norm{(\btheta - \btheta^*)_{S^{*c}}}_1 \leq 2\gamma \norm{(\btheta - \btheta^*)_{S^*}}_1,$
	we have by (\ref{eq_A29})
	\beq
	\norm{\btheta - \btheta^*}_1 &\leq \sqrt{s}(1 + 2\gamma)\norm{\btheta - \btheta^*}_2\\
	&\leq \frac{29}{3}\sqrt{s}\norm{\btheta - \btheta^*}_2\\
	&\leq \frac{29}{\rho_{-}}\bigg( C_2\sqrt{s}\delta^{\beta} \lor s\lambda \bigg).
	\eeq
	If $\norm{(\btheta - \btheta^*)_{S^{*c}}}_1 > 2\gamma \norm{(\btheta - \btheta^*)_{S^*}}_1,$ it follows that
	\beq
	\norm{\btheta - \btheta^*}_1  &\leq (1 + \frac{1}{2\gamma})\norm{(\btheta - \btheta^*)_{S^{*c}}}_1.
	\eeq
	Notice that in this case, (\ref{lm1e7}) implies that
	
	\beq
	(\frac12\lambda - \norm{E_1}_\infty)\norm{(\btheta - \btheta^*)_{S^{*c}}}_1  &\leq  2\bigg (
	C_2\delta^{\beta}\norm{\btheta - \btheta^*}_2 - \rho_{-}\norm{\btheta - \btheta^*}^2_2
	\bigg).
	\eeq
	Therefore, by the condition of $\lambda_{tgt}$ and (\ref{eq_A29})
	\beq
	\norm{\btheta - \btheta^*}_1  &\leq \frac{(2 + \frac{1}{\gamma})}{\frac12\lambda - \norm{E_1}_\infty} C_2\delta^{\beta}\norm{\btheta - \btheta^*}_2\\
	&\leq \frac{56C_2\delta^{\beta}}{3\rho_{-}}\bigg( \frac{C_2\delta^{\beta}}{\lambda} \lor \sqrt{s} \bigg).
	\eeq
	Conclude the above two cases gives
	\beq
	\norm{\btheta - \btheta^*}_1 \leq \frac{1}{\rho_{-}}\bigg(
	29\sqrt{s}C_2\delta^{\beta}  \lor  29s\lambda  \lor \frac{56C_2^2\delta^{2\beta}}{3\lambda}\bigg ).
	\eeq
	Finally, we obtain
	\beq
	f_\lambda(\btheta) - f_\lambda(\btheta^*) &\leq (\btheta - \btheta^*)^T(\nabla R_\delta^n(\btheta) + \lambda\bxi)\\
	&\leq  \frac{1}{2}\lambda\norm{\btheta - \btheta^*}_1\\\
	&\leq \frac{1}{2\rho_{-}}\bigg(
	29\sqrt{s}C_2\delta^{\beta}\lambda  \lor  29s\lambda^2  \lor \frac{56C_2^2\delta^{2\beta}}{3}\bigg ),
	\eeq
	where the first inequality follows from the restricted strong convexity and the second inequality follows from (\ref{lm1e2}). Writing
	\beq
	&\bar{C}_1 = 3(C_2 \lor 1),\\
	&\bar{C}_2 = 29C_2 \lor 29 \lor \frac{56C_2^2}{3}
	\eeq completes the proof.
\end{proof}

\subsubsection{Proof of Lemma \ref{lemma_norm}}\label{proof_lemma_norm}

\begin{proof}
	From the assumption we have
	\beq\label{lm2e1}
	R_\delta^n(\btheta) + \lambda\norm{\btheta}_1 \leq R_\delta^n(\btheta^*) + \lambda\norm{\btheta^*}_1 +  \frac{\bar{C}_2}{2\rho_{-}}\bigg( \delta^{2\beta} \lor \sqrt{s}\delta^{\beta}\lambda \lor   s\lambda^2  \bigg).
	\eeq
	Since $\norm{\btheta_{S^{*c}}}_0 \leq \tilde{s}$ 
	by applying (\ref{RSC_condi}) in Assumption \ref{RSC}, we can show that
	\beq\label{lm2e2}
	R_\delta^n(\btheta) &\geq R_\delta^n(\btheta^*) + \nabla R_\delta^n(\btheta^*)^T(\btheta - \btheta^*) + \frac{1}{2} \rho_{-}\norm{\btheta - \btheta^*}^2_2\\
	\eeq
	Subtract (\ref{lm2e2}) from (\ref{lm2e1}) gives
	\beq
	&\frac 12\rho_{-}\norm{\btheta - \btheta^*}^2_2\\\leq&
	\lambda\norm{\btheta^*}_1 - \lambda\norm{\btheta}_1 -  \nabla R_\delta^n(\btheta^*)^T(\btheta - \btheta^*) + \frac{\bar{C}_2}{2\rho_{-}}\bigg( \delta^{2\beta} \lor \sqrt{s}\delta^{\beta}\lambda \lor   s\lambda^2  \bigg)\\
	\leq& \lambda\norm{(\btheta - \btheta^*)_{S^*}}_1 - \lambda\norm{(\btheta - \btheta^*)_{S^{*c}}}_1 + \norm{\btheta - \btheta^*}_1\norm{E_1}_\infty\\
	&~~~~~~~~~~~~~~~~~~ + C_2\delta^{\beta}\norm{\btheta - \btheta^*}_2+ \frac{\bar{C}_2}{2\rho_{-}}\bigg( \delta^{2\beta} \lor \sqrt{s}\delta^{\beta}\lambda \lor   s\lambda^2  \bigg),
	\eeq
	where the second inequality follows from triangle inequality and (\ref{lm1e6}). Separating $\norm{\btheta - \btheta^*}_1$ into $\norm{(\btheta - \btheta^*)_{S^*}}_1 + \norm{(\btheta - \btheta^*)_{S^{*c}}}_1$ gives
	\beq\label{lm2e3}
	\frac 12\rho_{-}\norm{\btheta - \btheta^*}^2_2 &\leq  (\lambda + \norm{E_1}_\infty)\norm{(\btheta - \btheta^*)_{S^*}}_1- (\lambda- \norm{E_1}_\infty)\norm{(\btheta - \btheta^*)_{S^{*c}}}_1 \\
	&+ C_2\delta^{\beta}\norm{\btheta - \btheta^*}_2+ \frac{\bar{C}_2}{2\rho_{-}}\bigg( \delta^{2\beta} \lor \sqrt{s}\delta^{\beta}\lambda \lor   s\lambda^2  \bigg).
	\eeq
	Now we consider two cases:
	\begin{itemize}
		\item If $\norm{\btheta - \btheta^*}_1 \leq\frac{\bar{C}_2}{\rho_{-}}\bigg( \frac{\delta^{2\beta}}{\lambda} \lor \sqrt{s}\delta^{\beta} \lor   s\lambda  \bigg)$,   we have by (\ref{lm2e3}) and the condition of $\lambda_{tgt}$
		\beq
		\frac 12\rho_{-}\norm{\btheta - \btheta^*}^2_2 &\leq
		\frac98\lambda  \norm{\btheta - \btheta^*}_1 + C_2\delta^{\beta}\norm{\btheta - \btheta^*}_2 + \frac{\bar{C}_2}{2\rho_{-}}\bigg( \delta^{2\beta} \lor \sqrt{s}\delta^{\beta}\lambda \lor   s\lambda^2  \bigg)\\
		&\leq C_2\delta^{\beta}\norm{\btheta - \btheta^*}_2 + \frac{13\bar{C}_2}{8\rho_{-}}\bigg( \delta^{2\beta} \lor \sqrt{s}\delta^{\beta}\lambda \lor   s\lambda^2  \bigg).
		\eeq
		Now we further consider two cases:
		\begin{enumerate}
			\item If $\frac 12\rho_{-}\norm{\btheta - \btheta^*}^2_2 \leq 3C_2\delta^{\beta}\norm{\btheta - \btheta^*}_2$, then it holds trivially that
			\[
			\norm{\btheta - \btheta^*}_2 \leq \frac{6C_2}{\rho_{-}}\delta^{\beta} \leq \frac{2\bar{C_2}}{\rho_{-}}\delta^{\beta}.
			\]
			\item If $\frac 12\rho_{-}\norm{\btheta - \btheta^*}^2_2 > 3C_2\delta^{\beta}\norm{\btheta - \btheta^*}_2$, plug in this into (\ref{lm2e3}) implies
			\[
			\frac 13\rho_{-}\norm{\btheta - \btheta^*}^2_2 \leq  \frac{13\bar{C}_2}{8\rho_{-}}\bigg( \delta^{2\beta} \lor \sqrt{s}\delta^{\beta}\lambda \lor   s\lambda^2  \bigg),
			\]
			which further implies that
			\[
			\norm{\btheta - \btheta^*}_2 \leq \frac{\sqrt{39\bar{C}_2/8}}{\rho_-} \sqrt{\delta^{2\beta} \lor \sqrt{s}\delta^{\beta}\lambda \lor   s\lambda^2  }.
			\]
			
		\end{enumerate}
	\item If $\norm{\btheta - \btheta^*}_1 > \frac{\bar{C}_2}{\rho_{-}}\bigg( \frac{\delta^{2\beta}}{\lambda} \lor \sqrt{s}\delta^{\beta} \lor   s\lambda  \bigg)$, then we obtain
	\beq
	\frac12\lambda \norm{\btheta - \btheta^*}_1 \geq \frac{\bar{C}_2}{2\rho_{-}}\bigg( \delta^{2\beta} \lor \sqrt{s}\delta^{\beta}\lambda \lor   s\lambda^2  \bigg).
	\eeq
	Plug in this into (\ref{lm2e3}) gives
	 	\beq
	 \frac 12\rho_{-}\norm{\btheta - \btheta^*}^2_2 &\leq  (\frac32\lambda + \norm{E_1}_\infty)\norm{(\btheta - \btheta^*)_{S^*}}_1- (\frac12\lambda- \norm{E_1}_\infty)\norm{(\btheta - \btheta^*)_{S^{*c}}}_1\\
	 &+ C_2\delta^{\beta}\norm{\btheta - \btheta^*}_2.
	 \eeq
	 Following a similar derivation of Lemma \ref{lemma_initialization}, we can show that
	 \beq
	 \norm{\btheta - \btheta^*}_2 \leq \frac{2\bar{C}_1}{\rho_{-}}\bigg( \delta^{\beta} \lor \sqrt{s}\lambda \bigg) ,
	 \eeq
	 and
	 \beq
	 \norm{\btheta - \btheta^*}_1 \leq \frac{2\bar{C}_2}{\rho_{-}}\bigg( \frac{\delta^{2\beta}}{\lambda} \lor \sqrt{s}\delta^{\beta} \lor   s\lambda  \bigg),
	 \eeq
	 where $\bar{C}_1$ and $\bar{C}_2$ are constants defined in Lemma \ref{lemma_initialization}.
	\end{itemize}
	Combining the above cases, we conclude that
	\beq
	&\norm{\btheta - \btheta^*}_1 \leq \frac{2\bar{C}_2}{\rho_{-}}\bigg( \frac{\delta^{2\beta}}{\lambda} \lor \sqrt{s}\delta^{\beta} \lor   s\lambda  \bigg),\text{and}\\
	&\norm{\btheta - \btheta^*}_2 \leq \frac{2\bar{C}_1 \lor \sqrt{39\bar{C}_2/8}}{\rho_{-}}\bigg( \delta^{\beta} \lor s^{1/4}\sqrt{\delta^{\beta}\lambda}   \lor \sqrt{s}\lambda \bigg ).
	\eeq
	Writing $\bar{C}'_1 = 2\bar{C}_1 \lor \sqrt{39\bar{C}_2/8}$ completes the proof.
\end{proof}


\subsubsection{Proof of Lemma \ref{lemma_sparse}}\label{proof_lemma_sparse}
\begin{proof}
	Notice that when the constraint set $\Omega = \RR^d$, the updating rule becomes to the soft-thresholding operator
	\beq
	(\cS_{\lambda \eta}(\btheta, \RR^d ))_j =
	\begin{cases}
		0 &\text{if } |\bar{\theta}_j| \leq \lambda \eta,\\
		\sign{\bar{\theta}_j}(|\bar{\theta}_j| - \lambda \eta) &\text{if } |\bar{\theta}_j| > \lambda \eta,\\
	\end{cases}
	\eeq
	where $\bar{\btheta} = \btheta - \eta\nabla R_\delta^n(\btheta)$.
	Now for $\Omega = \{\bv:\norm{\bv}_2 \leq R \}$ with some $R > 0$, following a standard Lagrangian argument it can be shown that $\cS_{\lambda \eta}(\btheta, \Omega )$ can be obtained by firstly calculating the unconstrained version above, and then project it on to $\Omega$. This can be achieved by scaling, which does not affect the sparsity pattern.
	
	Therefore we have
	\beq
	\norm{\cS_{\lambda \eta}(\btheta,\Omega)_{S^{*c}}}_0 = \bigg|\bigg\{
	j\in S^{*c}: |\bar{\theta}_j| > \lambda\eta
	\bigg\} \bigg|.
	\eeq
	Notice that by definition $$\bar{\btheta} = {\btheta} - \eta(\nabla R_\delta^n(\btheta^*) - \nabla R_\delta(\btheta^*)) - \eta(\nabla R_\delta(\btheta^*) - \nabla R(\btheta^*)) + \eta(\nabla R_\delta^n(\btheta^*) - \nabla R_\delta^n(\btheta))$$
	Therefore it suffices to show that $|S_1| + |S_2| + |S_3| + |S_4| \leq \tilde{s}$, where
	\beq
	&S_1 = \bigg| \bigg \{
	j \in S^{*c} : |\theta_j| \geq \frac 14\lambda\eta
	\bigg \} \bigg|\\
	&S_2 = \bigg| \bigg \{
	j \in S^{*c} : | \eta(\nabla R_\delta^n(\btheta^*) - \nabla R_\delta^n(\btheta))_j| \geq \frac 14\lambda\eta
	\bigg \} \bigg|\\
	&S_3 = \bigg| \bigg \{
	j \in S^{*c} : |\eta(\nabla R_\delta^n(\btheta^*) - \nabla R_\delta(\btheta^*))_j| \geq \frac 14\lambda\eta
	\bigg \} \bigg|\\
	&S_4 = \bigg| \bigg \{
	j \in S^{*c} : |\eta(\nabla R_\delta(\btheta^*) - \nabla R(\btheta^*))_j| \geq \frac 14\lambda\eta
	\bigg \} \bigg|
	\eeq
	
	For $S_1$, we have by definition
	\beq\label{s1}
	|S_1| &= \sum_{j\in S^{*c}}\mathds{1}(|(\btheta - \btheta^*)_j| \geq \frac14\eta\lambda )\\
	&\leq \frac{4}{\eta\lambda}	\sum_{j\in S^{*c}}|(\btheta - \btheta^*)_j|\\
	&\leq \frac{4}{\eta\lambda} \norm{\btheta - \btheta^*}_1.\\
	&\leq \frac{8\bar{C}_2}{\eta\rho_{-}}\bigg( \frac{\delta^{2\beta}}{\lambda^2} \lor \frac{\sqrt{s}\delta^{\beta} }{\lambda}\lor   s  \bigg),
	\eeq
	where the last inequality follows from Lemma \ref{lemma_norm}.
	
	For $S_2$, 
	consider a vector $\bv \in \RR^d$ such that $v_j = \sign{(\nabla R_\delta^n(\btheta^*) - \nabla R_\delta^n(\btheta))_j}$ for $j \in S_2$ and $v_j = 0$ otherwise. Then we have
	\beq
	\sqrt{|S_2|}\norm{\nabla R_\delta^n(\btheta^*) - \nabla R_\delta^n(\btheta)}_2 &= \norm{\bv}_2\norm{\nabla R_\delta^n(\btheta^*) - \nabla R_\delta^n(\btheta)}_2\\ &\geq v^T(\nabla R_\delta^n(\btheta^*) - \nabla R_\delta^n(\btheta))\\
	&= \sum_{j\in S_2}\bigg|(\nabla R_\delta^n(\btheta^*) - \nabla R_\delta^n(\btheta))_j\bigg|\\
	&\geq  \frac14 \lambda |S_2|.
	\eeq
	By the assumption that $\norm{\btheta}_0 \leq \tilde{s}$, we know $\norm{\btheta - \btheta^*}_0\leq s + \tilde{s}.$ Therefore (\ref{RS_condi}) in Assumption \ref{RSC} implies that
	\[
	\norm{\nabla R_\delta^n(\btheta^*) - \nabla R_\delta^n(\btheta)}_2 \leq \rho_{+}\norm{\btheta - \btheta^*}_2,
	\]
	which further implies that
	\beq\label{s2}
	|S_2| &\leq \frac{16}{\lambda^2}\rho_{+}^2\norm{\btheta - \btheta^*}_2^2\\
	&\leq \frac{16\bar{C}'^2_1\rho_{+}^2}{\rho^2_{-}}\bigg( \frac{\delta^{2\beta}}{\lambda^2} \lor \frac{\sqrt{s}\delta^{\beta} }{\lambda}\lor   s  \bigg).
	\eeq
	For $S_4$, using a similar derivation to that for $S_2$ above and Proposition \ref{prop_bias} implies that
	\beq
	\sqrt{|S_4|}C_2\delta^{\beta} \geq \frac 14 \lambda |S_4|,
	\eeq
	which further implies that
	\beq\label{s4}
	|S_4| \leq \frac{16C_2^2\delta^{2\beta}}{\lambda^2}.
	\eeq
	Finally for $S_3$,
	by the condition of $\lambda_{tgt}$, we have $\lambda > 4 \norm{\nabla R_\delta^n(\btheta^*) - \nabla R_\delta(\btheta^*)}_\infty$, which implies that
	\beq\label{s3}
	|S_3| = 0.
	\eeq
		
	Recall that we have $\lambda \geq \lambda_{tgt} = C\sqrt{\frac{\log d}{n\delta}}$,
	by choosing $$\delta = c\bigg(\frac{s \log d}{n}\bigg)^{1/(2\beta+1)},$$
	where $c^{\beta + 0.5} \leq  C$, we obtain
	\beq
	\frac{\delta^{2\beta}}{\lambda^2} \leq \frac{\sqrt{s}\delta^{\beta} }{\lambda} \leq s.
	\eeq
	Combining (\ref{s1}), (\ref{s2}), (\ref{s4}) and (\ref{s3}), we obtain
	\beq
	\norm{\cS_{\lambda \eta}(\btheta, \Omega)_{S^{*c}}}_0  \leq 8\bigg(\frac{\bar{C}_2}{\eta\rho_{-}} + \frac{2\bar{C}'^2_1\rho_{+}^2}{\rho^2_{-}} + 2C_2^2\bigg)\cdot s .
	\eeq
	This completes the proof.
\end{proof}

\begin{lemma}[ \cite{nesterov2013gradient}, Theorem 1]\label{lemma_monotone}
	Under the same conditions of Lemma \ref{lemma_sparse}, we have
	\[
	f_\lambda( \cS_{\lambda \eta}(\btheta,\Omega)) \leq f_\lambda(\btheta) - \frac{1}{2\eta}\norm{\cS_{\lambda \eta}(\btheta,\Omega) - \btheta}_2^2.
	\]	
\end{lemma}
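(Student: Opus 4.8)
The plan is to run the classical ``sufficient decrease'' argument for one proximal-gradient step, the only non-routine point being that the restricted smoothness condition in Assumption \ref{RSC} is available only at sparse points inside $\Omega$, so one must first certify that the updated point stays in that region. Write $\btheta^{+} = \cS_{\lambda\eta}(\btheta,\Omega)$, and recall that by definition $\btheta^{+}$ minimizes over $\btheta'\in\Omega$ the function
\[
Q(\btheta') := R_\delta^n(\btheta) + \langle\nabla R_\delta^n(\btheta),\btheta'-\btheta\rangle + \frac{1}{2\eta}\norm{\btheta'-\btheta}_2^2 + \lambda\norm{\btheta'}_1 .
\]
First I would note that $Q$ is $1/\eta$-strongly convex on $\RR^d$, since the quadratic term alone has modulus $1/\eta$ and $\lambda\norm{\cdot}_1$ is convex. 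Because $\btheta^{+}$ minimizes $Q$ over the convex set $\Omega$ and $\btheta\in\Omega$, the first-order optimality condition for constrained minimization gives $Q(\btheta)\ge Q(\btheta^{+}) + \frac{1}{2\eta}\norm{\btheta-\btheta^{+}}_2^2$. Expanding $Q(\btheta)=R_\delta^n(\btheta)+\lambda\norm{\btheta}_1$ and $Q(\btheta^{+})$ and rearranging, this is equivalent to the key inequality
\[
\langle\nabla R_\delta^n(\btheta),\btheta^{+}-\btheta\rangle + \lambda\norm{\btheta^{+}}_1 \;\le\; \lambda\norm{\btheta}_1 - \frac{1}{\eta}\norm{\btheta^{+}-\btheta}_2^2 .
\]

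Next I would bound $R_\delta^n(\btheta^{+})$ by restricted smoothness. Under the hypotheses of Lemma \ref{lemma_sparse} we have $\btheta\in\Omega$ with $\norm{\btheta_{S^{*c}}}_0\le\tilde s$, so $\norm{\btheta}_0\le s+\tilde s$; Lemma \ref{lemma_sparse} further gives $\norm{\btheta^{+}_{S^{*c}}}_0\le\tilde s$, hence $\norm{\btheta^{+}}_0\le s+\tilde s$, while $\btheta^{+}\in\Omega$ holds by construction. Thus $(\btheta,\btheta^{+})$ is an admissible pair for (\ref{RS_condi}), which yields $R_\delta^n(\btheta^{+}) \le R_\delta^n(\btheta) + \langle\nabla R_\delta^n(\btheta),\btheta^{+}-\btheta\rangle + \frac{\rho_{+}}{2}\norm{\btheta^{+}-\btheta}_2^2$. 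Adding $\lambda\norm{\btheta^{+}}_1$ to both sides and substituting the key inequality from the previous step, I get $f_\lambda(\btheta^{+}) \le f_\lambda(\btheta) - \big(\frac{1}{\eta}-\frac{\rho_{+}}{2}\big)\norm{\btheta^{+}-\btheta}_2^2$. Finally, since $\eta\le 1/\rho_{+}$ we have $\rho_{+}\le 1/\eta$, hence $\frac{1}{\eta}-\frac{\rho_{+}}{2}\ge\frac{1}{2\eta}$, and the claim follows.

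The main (indeed the only) obstacle is the admissibility check needed to invoke (\ref{RS_condi}): restricted smoothness is only postulated on sparse vectors in $\Omega$, so one has to know in advance that a proximal-gradient step does not escape this region, which is precisely the content of Lemma \ref{lemma_sparse}; this is why the present lemma is stated relative to it rather than as a stand-alone convex-analysis fact. Beyond that, I would only double-check the strong-convexity constant: the quadratic penalty in $Q$ is $\frac{1}{2\eta}\norm{\cdot}_2^2$, so its modulus is $1/\eta$, and it is this value --- not a smaller one --- that makes the full $-\frac{1}{2\eta}$ decrement (rather than the weaker $-(\frac{1}{2\eta}-\frac{\rho_{+}}{2})$) possible once $\eta\le 1/\rho_{+}$. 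I note also that the result is essentially \cite{nesterov2013gradient}, Theorem 1, so citing it together with this sparsity remark is an alternative to writing the three lines out.
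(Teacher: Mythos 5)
Your proof is correct. The paper does not actually write out a proof of this lemma --- it is stated as a citation of \cite{nesterov2013gradient}, Theorem 1 --- and your argument is exactly the standard sufficient-decrease derivation that citation refers to: strong convexity of the surrogate $Q$ at its constrained minimizer gives the key inequality, restricted smoothness (\ref{RS_condi}) then yields the decrement $\bigl(\tfrac{1}{\eta}-\tfrac{\rho_+}{2}\bigr)\norm{\btheta^{+}-\btheta}_2^2 \ge \tfrac{1}{2\eta}\norm{\btheta^{+}-\btheta}_2^2$ under $\eta\le 1/\rho_{+}$. You also correctly identify the one adaptation needed in this paper's setting, namely that (\ref{RS_condi}) is only available for sparse vectors in $\Omega$, so the admissibility of the pair $(\btheta,\btheta^{+})$ must be certified by Lemma \ref{lemma_sparse} (with the observation, made in the paper's proof of that lemma, that the projection onto $\Omega$ is a rescaling preserving the sparsity pattern) --- which is precisely why the lemma is stated ``under the same conditions of Lemma \ref{lemma_sparse}.''
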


\begin{lemma}[\cite{nesterov2013gradient}, Corollary 1]\label{lemma_nes_subop}
	Under the same conditions of Lemma \ref{lemma_sparse}, we have
	\[
	\omega_{\lambda_t}(\cS_{\lambda \eta}(\btheta,\Omega)) \leq (\frac 1\eta + \rho_{+})\norm{\cS_{\lambda \eta}(\btheta,\Omega) - \btheta}_2.
	\]
\end{lemma}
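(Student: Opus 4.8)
The plan is to follow the standard analysis of the proximal-gradient (forward--backward) mapping from \cite{nesterov2013gradient}, adapted to the constraint set $\Omega$ and to the suboptimality measure $\omega_\lambda$ defined in (\ref{equ_subopti}). Write $\btheta^{+} = \cS_{\lambda\eta}(\btheta,\Omega)$ for brevity. Since $\btheta^{+}$ is the unique minimizer of the strongly convex problem
\[
\min_{\btheta'\in\Omega}\Big\{\tfrac{1}{2\eta}\bnorm{\btheta' - \btheta + \eta\nabla R_\delta^n(\btheta)}_2^2 + \lambda\norm{\btheta'}_1\Big\},
\]
its first-order optimality condition yields a subgradient $\bxi\in\partial\norm{\btheta^{+}}_1$ and a vector $\bu$ in the normal cone of $\Omega$ at $\btheta^{+}$ (so that $(\btheta'-\btheta^{+})^T\bu\le 0$ for every $\btheta'\in\Omega$) with
\[
\tfrac{1}{\eta}(\btheta^{+} - \btheta) + \nabla R_\delta^n(\btheta) + \lambda\bxi + \bu = 0 .
\]
This identity is the one structural fact I would extract; everything else is manipulation.

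Next I would plug this particular $\bxi$ into the definition of $\omega_\lambda(\btheta^{+})$, which only produces an upper bound because $\omega_\lambda$ takes the minimum over subgradients. Using the optimality identity to write
\[
\nabla R_\delta^n(\btheta^{+}) + \lambda\bxi = \big(\nabla R_\delta^n(\btheta^{+}) - \nabla R_\delta^n(\btheta)\big) - \tfrac1\eta(\btheta^{+} - \btheta) - \bu,
\]
and discarding the normal-cone term via $(\btheta^{+}-\btheta')^T(-\bu) = (\btheta'-\btheta^{+})^T\bu\le 0$, Hölder's inequality gives, for every $\btheta'\in\Omega$,
\[
\frac{(\btheta^{+}-\btheta')^T(\nabla R_\delta^n(\btheta^{+})+\lambda\bxi)}{\norm{\btheta^{+}-\btheta'}_1}\le \norm{\nabla R_\delta^n(\btheta^{+})-\nabla R_\delta^n(\btheta)}_\infty + \tfrac1\eta\norm{\btheta^{+}-\btheta}_\infty .
\]
Taking the maximum over $\btheta'\in\Omega$ and bounding $\norm{\cdot}_\infty\le\norm{\cdot}_2$ on both terms gives $\omega_\lambda(\btheta^{+})\le \norm{\nabla R_\delta^n(\btheta^{+})-\nabla R_\delta^n(\btheta)}_2 + \tfrac1\eta\norm{\btheta^{+}-\btheta}_2$.

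Finally I would invoke restricted smoothness. Under the conditions of Lemma \ref{lemma_sparse} the iterate $\btheta$ satisfies $\norm{\btheta_{S^{*c}}}_0\le\tilde{s}$, and by Lemma \ref{lemma_sparse} (together with the observation in its proof that projecting the soft-thresholded vector onto $\Omega$ is a pure rescaling, hence leaves the support unchanged) the same bound holds for $\btheta^{+}$; thus both points lie in the sparse region of $\Omega$ on which Assumption \ref{RSC} applies, so $\norm{\nabla R_\delta^n(\btheta^{+})-\nabla R_\delta^n(\btheta)}_2\le\rho_{+}\norm{\btheta^{+}-\btheta}_2$ --- exactly the consequence of (\ref{RS_condi}) already used in the proof of Lemma \ref{lemma_sparse}. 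Combining the two displays yields $\omega_\lambda(\btheta^{+})\le(\tfrac1\eta+\rho_{+})\norm{\btheta^{+}-\btheta}_2$, which is the claim (with $\lambda$ playing the role of $\lambda_t$). The only delicate point is the bookkeeping guaranteeing that $\btheta$ and $\btheta^{+}$ both lie in the sparse set where RSM is valid, but this is supplied directly by Lemma \ref{lemma_sparse}; the rest is routine convex-analytic computation.
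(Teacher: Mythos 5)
Your proof is correct, and it is worth noting that the paper itself offers no proof of this lemma at all --- it is stated as a citation to Nesterov (2013), Corollary~1. Your argument is a faithful, self-contained reconstruction of that cited result adapted to the constrained, sparse setting: the first-order optimality condition for the proximal step (with the normal-cone term for $\Omega$), substitution of the resulting subgradient into the definition of $\omega_\lambda$, discarding the normal-cone term by its sign, H\"older plus $\|\cdot\|_\infty\le\|\cdot\|_2$, and finally the Lipschitz-gradient consequence of restricted smoothness. The only step that is not fully rigorous is the inference from the function-value form of RSM in (\ref{RS_condi}) to the full $\ell_2$ bound $\|\nabla R_\delta^n(\btheta^{+})-\nabla R_\delta^n(\btheta)\|_2\le\rho_{+}\|\btheta^{+}-\btheta\|_2$ (a quadratic upper bound restricted to sparse directions controls only the directional component $(\btheta^{+}-\btheta)^T(\nabla R_\delta^n(\btheta^{+})-\nabla R_\delta^n(\btheta))$, not the whole gradient-difference vector); but this is exactly the same step the paper itself takes without comment when bounding $|S_2|$ in the proof of Lemma~\ref{lemma_sparse}, so you are on equal footing with the authors there.
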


\begin{lemma}\label{good_ini}
	If $\omega_{\lambda_{t-1}}(\tilde{\btheta}_{t-1}) \leq \frac 14 \lambda_{t-1}$, then we have
	$$\omega_{\lambda_t}(\tilde{\btheta}_{t-1}) \leq \frac12 \lambda_{t}.$$
\end{lemma}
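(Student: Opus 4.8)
The plan is to compare $\omega_{\lambda_t}(\tilde\btheta_{t-1})$ against $\omega_{\lambda_{t-1}}(\tilde\btheta_{t-1})$ directly, using only the definition in (\ref{equ_subopti}), the geometric spacing $\lambda_t=\phi\lambda_{t-1}$ with $\phi=0.9$, and the fact that any $\bxi\in\partial\norm{\btheta}_1$ satisfies $\norm{\bxi}_\infty\le 1$. Write $g:=\nabla R_\delta^n(\tilde\btheta_{t-1})$ for brevity. The key algebraic observation is that, since $\lambda_t=\phi\lambda_{t-1}$, for any subgradient $\bxi$ we have the convex combination $g+\lambda_t\bxi=\phi\,(g+\lambda_{t-1}\bxi)+(1-\phi)\,g$.

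First I would fix $\bxi^\star\in\partial\norm{\tilde\btheta_{t-1}}_1$ to be a subgradient attaining the outer minimum in $\omega_{\lambda_{t-1}}(\tilde\btheta_{t-1})$. Then for every $\btheta'\in\Omega$ with $\btheta'\neq\tilde\btheta_{t-1}$,
\[
\frac{(\tilde\btheta_{t-1}-\btheta')^T(g+\lambda_t\bxi^\star)}{\norm{\tilde\btheta_{t-1}-\btheta'}_1}
=\phi\,\frac{(\tilde\btheta_{t-1}-\btheta')^T(g+\lambda_{t-1}\bxi^\star)}{\norm{\tilde\btheta_{t-1}-\btheta'}_1}
+(1-\phi)\,\frac{(\tilde\btheta_{t-1}-\btheta')^T g}{\norm{\tilde\btheta_{t-1}-\btheta'}_1}.
\]
Taking $\max_{\btheta'\in\Omega}$ on both sides, using subadditivity of the maximum (both weights $\phi,1-\phi$ are positive), that $\bxi^\star$ attains the min defining $\omega_{\lambda_{t-1}}$, and finally that $\bxi^\star$ is a feasible choice in the outer min defining $\omega_{\lambda_t}$, yields
\[
\omega_{\lambda_t}(\tilde\btheta_{t-1})\le \phi\,\omega_{\lambda_{t-1}}(\tilde\btheta_{t-1})+(1-\phi)\max_{\btheta'\in\Omega}\frac{(\tilde\btheta_{t-1}-\btheta')^T g}{\norm{\tilde\btheta_{t-1}-\btheta'}_1}.
\]

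Next I would bound the residual term by splitting $g=(g+\lambda_{t-1}\bxi^\star)-\lambda_{t-1}\bxi^\star$; by Hölder's inequality and $\norm{\bxi^\star}_\infty\le 1$ the second piece contributes at most $\lambda_{t-1}$, so the residual maximum is at most $\omega_{\lambda_{t-1}}(\tilde\btheta_{t-1})+\lambda_{t-1}$. Plugging this in and invoking the hypothesis $\omega_{\lambda_{t-1}}(\tilde\btheta_{t-1})\le\tfrac14\lambda_{t-1}$ gives
\[
\omega_{\lambda_t}(\tilde\btheta_{t-1})\le \phi\cdot\tfrac14\lambda_{t-1}+(1-\phi)\Big(\tfrac14\lambda_{t-1}+\lambda_{t-1}\Big)=\frac{5-4\phi}{4}\,\lambda_{t-1}=\frac{5-4\phi}{4\phi}\,\lambda_t.
\]
It then remains to check the elementary numerical inequality $\tfrac{5-4\phi}{4\phi}\le\tfrac12$, equivalently $\phi\ge\tfrac56$, which holds for the choice $\phi=0.9$ in Theorem \ref{theorem_main2}; this delivers $\omega_{\lambda_t}(\tilde\btheta_{t-1})\le\tfrac12\lambda_t$, as claimed.

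I do not anticipate a genuine obstacle here: the argument is purely deterministic and needs no probabilistic input. The only points requiring care are keeping the \emph{same} minimizing subgradient $\bxi^\star$ in all three places (so that both occurrences of $\omega_{\lambda_{t-1}}$ are legitimate upper bounds) and handling the degenerate case $\btheta'=\tilde\btheta_{t-1}$ by the usual convention that such terms are excluded from the maximum. The role of the constant $\phi=0.9$ is exactly to make the contraction factor small enough; this is where the specific numerology of Algorithm \ref{pf_algo} enters.
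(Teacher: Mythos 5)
Your proof is correct and is essentially the paper's argument: fixing the subgradient $\bxi^\star$ that attains the minimum for $\omega_{\lambda_{t-1}}$, using it as a feasible point for $\omega_{\lambda_t}$, splitting off the $(\lambda_{t-1}-\lambda_t)\bxi^\star$ term via subadditivity of the max, and bounding it by H\"older with $\norm{\bxi^\star}_\infty\le 1$. Your convex-combination rewriting collapses algebraically to the paper's direct decomposition and yields the identical bound $\tfrac14\lambda_{t-1}+(1-\phi)\lambda_{t-1}\le\tfrac12\lambda_t$ for $\phi=0.9$.
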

\begin{proof}
	Suppose $\bxi$ attains the minimum in $\omega_{\lambda_{t-1}}(\tilde{\btheta}_{t-1})$, we have
	\beq
	\omega_{\lambda_t}(\tilde{\btheta}_{t-1})  & = \min_{\bxi' \in \partial \norm{\tilde{\btheta}_{t-1}}_1}\max_{\btheta \in \Omega} \bigg \{
	\frac{(\tilde{\btheta}_{t-1} - \btheta)^T}{\norm{\tilde{\btheta}_{t-1} - \btheta}_1}(\nabla R_{\delta}^n(\tilde{\btheta}_{t-1}) + \lambda_t\bxi'   )
	\bigg \}\\
	&\leq \max_{\btheta \in \Omega} \bigg \{
	\frac{(\tilde{\btheta}_{t-1} - \btheta)^T}{\norm{\tilde{\btheta}_{t-1} - \btheta}_1}(\nabla R_{\delta}^n(\tilde{\btheta}_{t-1}) + \lambda_t\bxi   )\bigg \}\\
	&\leq \max_{\btheta \in \Omega} \bigg \{
	\frac{(\tilde{\btheta}_{t-1} - \btheta)^T}{\norm{\tilde{\btheta}_{t-1} - \btheta}_1}(\nabla R_{\delta}^n(\tilde{\btheta}_{t-1}) + \lambda_{t-1}\bxi   )\bigg \} + \max_{\btheta \in \Omega} \bigg \{
	\frac{(\tilde{\btheta}_{t-1} - \btheta)^T}{\norm{\tilde{\btheta}_{t-1} - \btheta}_1}(\lambda_{t-1} - \lambda_t  )\bxi\bigg \}\\
	&\leq \frac14 \lambda_{t-1} + \norm{(\lambda_{t-1} - \lambda_t  )\bxi}_\infty \\
	&\leq \frac14 \lambda_{t-1} + (1-\phi)\lambda_{t-1}\\
	&<  \frac12\lambda_{t},
	\eeq
	where the third inequality follows the duality between $\ell_1$ and $\ell_\infty$ norm, and the last inequality follows from $\lambda_{t} = \phi\lambda_{t-1}$ by definition and $ \phi = 0.9$.
\end{proof}

\begin{lemma}\label{smaller_lambda}
	Suppose Assumption \ref{RSC} holds. 
	If $\lambda \geq \lambda_{tgt}$, $\norm{\nabla R_\delta^n(\btheta^*) - \nabla R_\delta(\btheta^*)}_\infty \leq \lambda_{tgt}/8$, $\omega_{\lambda}(\btheta) \leq \frac12 \lambda$, $\norm{\btheta_{S^{*c}}}_0 \leq \tilde{s}$ and $\widehat{\btheta}_{\lambda} \in \Omega$ is a minimizer of $f_{\lambda}$ satisfying  $\norm{(\widehat{\btheta}_{\lambda})_{S^{*c}}}_0 \leq \tilde{s},$
	then  we have
	$$
	f_{\lambda}(\btheta) - f_{\lambda}(\widehat{\btheta}_{\lambda}) \leq \frac{\bar{C}_2}{\rho_{-}}\bigg( \delta^{2\beta} \lor \sqrt{s}\delta^{\beta}\lambda \lor   s\lambda^2  \bigg) .
	$$
\end{lemma}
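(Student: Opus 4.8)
The plan is to bound $f_\lambda(\btheta)-f_\lambda(\widehat{\btheta}_\lambda)$ by comparing $\btheta$ and $\widehat{\btheta}_\lambda$ \emph{directly} through the near-stationarity of $\btheta$ and the restricted strong convexity, and then to control the resulting $\ell_1$-distance $\norm{\btheta-\widehat{\btheta}_\lambda}_1$ via Lemma~\ref{lemma_initialization}. First I would fix a subgradient $\bxi\in\partial\norm{\btheta}_1$ attaining the minimum in the definition (\ref{equ_subopti}) of $\omega_\lambda(\btheta)$. Since $\widehat{\btheta}_\lambda\in\Omega$, taking $\btheta'=\widehat{\btheta}_\lambda$ as the competitor in (\ref{equ_subopti}) gives $(\btheta-\widehat{\btheta}_\lambda)^T(\nabla R_\delta^n(\btheta)+\lambda\bxi)\le \omega_\lambda(\btheta)\norm{\btheta-\widehat{\btheta}_\lambda}_1\le \tfrac12\lambda\norm{\btheta-\widehat{\btheta}_\lambda}_1$ (the case $\btheta=\widehat{\btheta}_\lambda$ being trivial).

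Next I would expand $f_\lambda(\btheta)-f_\lambda(\widehat{\btheta}_\lambda)=\big[R_\delta^n(\btheta)-R_\delta^n(\widehat{\btheta}_\lambda)\big]+\lambda\big[\norm{\btheta}_1-\norm{\widehat{\btheta}_\lambda}_1\big]$. Because $\norm{\btheta_{S^{*c}}}_0\le\tilde s$ and $\norm{(\widehat{\btheta}_\lambda)_{S^{*c}}}_0\le\tilde s$, both $\btheta$ and $\widehat{\btheta}_\lambda$ are $(s+\tilde s)$-sparse and lie in $\Omega$, so the RSC inequality (\ref{RSC_condi}) applied at $\btheta$ with $\btheta'=\widehat{\btheta}_\lambda$ yields $R_\delta^n(\btheta)-R_\delta^n(\widehat{\btheta}_\lambda)\le \nabla R_\delta^n(\btheta)^T(\btheta-\widehat{\btheta}_\lambda)-\tfrac{\rho_-}{2}\norm{\btheta-\widehat{\btheta}_\lambda}_2^2\le \nabla R_\delta^n(\btheta)^T(\btheta-\widehat{\btheta}_\lambda)$, while convexity of $\norm{\cdot}_1$ gives $\norm{\btheta}_1-\norm{\widehat{\btheta}_\lambda}_1\le \bxi^T(\btheta-\widehat{\btheta}_\lambda)$. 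Adding the two and invoking the previous step, $f_\lambda(\btheta)-f_\lambda(\widehat{\btheta}_\lambda)\le (\btheta-\widehat{\btheta}_\lambda)^T(\nabla R_\delta^n(\btheta)+\lambda\bxi)\le \tfrac12\lambda\norm{\btheta-\widehat{\btheta}_\lambda}_1$.

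Finally I would bound $\norm{\btheta-\widehat{\btheta}_\lambda}_1\le\norm{\btheta-\btheta^*}_1+\norm{\widehat{\btheta}_\lambda-\btheta^*}_1$ and apply Lemma~\ref{lemma_initialization} to each term. Both $\btheta$ and $\widehat{\btheta}_\lambda$ satisfy the hypotheses of that lemma: each lies in $\Omega$, is supported on $S^*$ together with at most $\tilde s$ extra coordinates, has $\omega_\lambda(\cdot)\le\tfrac12\lambda$ (for $\widehat{\btheta}_\lambda$ this follows from $\omega_\lambda(\widehat{\btheta}_\lambda)\le 0$), and $\norm{\nabla R_\delta^n(\btheta^*)-\nabla R_\delta(\btheta^*)}_\infty\le \lambda_{tgt}/8\le \lambda/8$ since $\lambda\ge\lambda_{tgt}$. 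Hence $\norm{\btheta-\widehat{\btheta}_\lambda}_1\le \tfrac{2\bar C_2}{\rho_-}\big(\tfrac{\delta^{2\beta}}{\lambda}\lor\sqrt{s}\,\delta^{\beta}\lor s\lambda\big)$, and multiplying by $\tfrac12\lambda$ gives exactly $\tfrac{\bar C_2}{\rho_-}\big(\delta^{2\beta}\lor\sqrt{s}\,\delta^{\beta}\lambda\lor s\lambda^2\big)$, as claimed.

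The step I expect to matter most is the first one: the natural but wasteful route is to write $f_\lambda(\btheta)-f_\lambda(\widehat{\btheta}_\lambda)=\big[f_\lambda(\btheta)-f_\lambda(\btheta^*)\big]+\big[f_\lambda(\btheta^*)-f_\lambda(\widehat{\btheta}_\lambda)\big]$, but Lemma~\ref{lemma_initialization} only \emph{upper} bounds $f_\lambda(\cdot)-f_\lambda(\btheta^*)$ and provides no lower bound on the local-minimum value $f_\lambda(\widehat{\btheta}_\lambda)$, so the second bracket cannot be controlled this way. Comparing $\btheta$ and $\widehat{\btheta}_\lambda$ directly through the suboptimality certificate (\ref{equ_subopti}) with $\widehat{\btheta}_\lambda$ as the competitor (plus RSC and $\ell_1$-convexity) circumvents this and, conveniently, costs only a factor of two in the $\ell_1$-distance relative to Lemma~\ref{lemma_initialization}, which is precisely what the stated constant $\bar C_2$ accommodates.
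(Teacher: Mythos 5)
Your proposal is correct and follows essentially the same route as the paper's proof: bound $f_\lambda(\btheta)-f_\lambda(\widehat{\btheta}_\lambda)$ by $(\nabla R_\delta^n(\btheta)+\lambda\bxi)^T(\btheta-\widehat{\btheta}_\lambda)\le\tfrac12\lambda\norm{\btheta-\widehat{\btheta}_\lambda}_1$ via the subgradient attaining the minimum in $\omega_\lambda(\btheta)$, RSC, and convexity of the $\ell_1$ penalty, then split $\norm{\btheta-\widehat{\btheta}_\lambda}_1$ through $\btheta^*$ and apply Lemma~\ref{lemma_initialization} to each piece. Your write-up merely spells out more explicitly the steps the paper compresses into ``follows a similar derivation to Lemma~\ref{lemma_initialization},'' including the observation that $\omega_\lambda(\widehat{\btheta}_\lambda)\le 0$ lets the lemma apply to the exact minimizer.
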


\begin{proof}
	Suppose $\bxi \in \partial \norm{\btheta}_1$ attains the minimum in $\omega_{\lambda}(\btheta)$. By the convexity of $\ell_1$ penalty and the restricted strong convexity by Assumption \ref{RSC} we obtain
	\beq\label{lm6e1}
	f_{\lambda}(\btheta) - f_{\lambda}(\widehat{\btheta}_{\lambda}) &\leq
	(\nabla R_\delta^n(\btheta) + \lambda\bxi)^T(\btheta - \widehat{\btheta}_{\lambda}) \\
	&\leq \frac12\lambda \norm{\btheta - \widehat{\btheta}_{\lambda}}_1,
	\eeq
	where the second inequality follows a similar derivation to Lemma \ref{lemma_initialization}.
	
	Now it suffices to bound $\norm{\btheta - \widehat{\btheta}_{\lambda}}_1$.
	Since $\norm{\btheta_{S^{*c}}}_0 \leq \tilde{s}$, $\norm{(\widehat{\btheta}_{\lambda})_{S^{*c}}}_0 \leq \tilde{s}$ and $\omega_{\lambda}(\btheta) \leq \frac12 \lambda$, Lemma \ref{lemma_initialization} implies that
	\beq
	f_{\lambda}(\btheta) - f_{\lambda}(\widehat{\btheta}_{\lambda}) &\leq \frac12\lambda \norm{\btheta - \widehat{\btheta}_{\lambda}}_1\\
	&\leq \frac12\lambda(\norm{\btheta - \btheta^* }_1  + \norm{\widehat{\btheta}_{\lambda} -\btheta^*}_1 )\\
	&\leq \frac{\bar{C}_2}{\rho_{-}}\bigg( \delta^{2\beta} \lor \sqrt{s}\delta^{\beta}\lambda \lor   s\lambda^2  \bigg).
	\eeq
	This completes the proof.
\end{proof}

\subsubsection{Proof of Proposition \ref{prop_key}}\label{proof_prop_key}

\begin{proof}
	The proof structure is parallel with \cite{wang2014optimal}.
	By Lemma \ref{lemma_initialization}, the initialization implies that
	\beq
	f_{\lambda_t}(\btheta_t^0) - f_{\lambda_t}(\btheta^*) \leq \frac{\bar{C}_2}{2\rho_{-}}\bigg( \delta^{2\beta} \lor \sqrt{s}\delta^{\beta}\lambda_t \lor   s\lambda_t^2  \bigg).
	\eeq
	We firstly prove the sparsity along the path $\{\btheta_t^k\}_{k = 0}^\infty$ by induction.
	Assume at iteration $k-1$ we have
	$$\norm{(\btheta^{k-1}_t)_{S^{*c}}}_0 \leq \tilde{s}\; \text{and}\; f_{\lambda_t}(\btheta_t^{k-1}) - f_{\lambda_t}(\btheta^*) \leq  \frac{\bar{C}_2}{2\rho_{-}}\bigg( \delta^{2\beta} \lor \sqrt{s}\delta^{\beta}\lambda_t \lor   s\lambda_t^2  \bigg).$$
	
	According to Lemma \ref{lemma_sparse}, we know
	
	$$\norm{(\btheta^{k}_t)_{S^{*c}}}_0 \leq \tilde{s}.$$
	On the other hand, Lemma \ref{lemma_monotone} implies	\[
	f_{\lambda_t}( \btheta_t^{k}) \leq f_{\lambda_t}(\btheta_t^{k-1}) - \frac{1}{2\eta}\norm{\btheta_t^{k}-\btheta_t^{k-1}}_2^2,
	\]
	which further implies that
	\beq\label{thom1e1}
	f_{\lambda_t}(\btheta_t^{k}) - f_{\lambda_t}(\btheta^*) \leq \frac{\bar{C}_2}{2\rho_{-}}\bigg( \delta^{2\beta} \lor \sqrt{s}\delta^{\beta}\lambda_t \lor   s\lambda_t^2  \bigg).
	\eeq
	
	Combining above, at iteration $k$, we have
	
	$$\norm{(\btheta^{k}_t)_{S^{*c}}}_0 \leq \tilde{s}\; \text{and}\; f_{\lambda_t}(\btheta_t^{k}) - f_{\lambda_t}(\btheta^*) \leq \frac{\bar{C}_2}{2\rho_{-}}\bigg( \delta^{2\beta} \lor \sqrt{s}\delta^{\beta}\lambda_t \lor   s\lambda_t^2  \bigg).$$
	
	Thus the induction holds and we conclude that $\norm{(\btheta^{k}_t)_{S^{*c}}}_0 \leq \tilde{s}\;\forall\;k = 0,\dotso.$
	
	Now we start to prove that the sequence converges to a unique local solution. Since $\norm{(\btheta^0_t)_{S^{*c}}}_0 \leq \tilde{s}$, by the restricted strong convexity in Assumption \ref{RSC}, we know the level set
	\[
	\{\btheta : f_{\lambda_t}(\btheta) \leq  f_{\lambda_t}(\btheta_t^0), \norm{\btheta}_0, \norm{\btheta_t^0}_0\leq s + \tilde{s} \}
	\]
	is bounded below. This further implies that the sequence $\{f_{\lambda_t}(\btheta_t^k)\}_{k = 0}^\infty$ is also bounded below.
	
	Since $\norm{(\btheta^{k}_t)_{S^{*c}}}_0 \leq \tilde{s}\;\forall\;k = 0,\dotso,$ by Lemma \ref{lemma_monotone}, we obtain that the function value along the path is decreasing monotonically, i.e.,
	\[
	f_{\lambda_t}( \btheta_t^{0}) \geq  f_{\lambda_t}( \btheta_t^{1}) \geq \dotso .
	\]
	This implies that
	\[
	\lim\limits_{k\rightarrow\infty}\{
	f_{\lambda_t}( \btheta_t^{k}) - f_{\lambda_t}( \btheta_t^{k+1})
	\} = 0.
	\]
	In addition, by Lemma \ref{lemma_monotone}, we know
	\[
	\lim\limits_{k\rightarrow\infty}\norm{\btheta_t^{k} - \btheta_t^{k+1}}_2^2 \leq 2\eta \lim\limits_{k\rightarrow\infty}\{
	f_{\lambda_t}( \btheta_t^{k}) - f_{\lambda_t}( \btheta_t^{k+1})
	\} = 0,
	\]
	and by Lemma \ref{lemma_nes_subop}, we have
	\[
	\lim\limits_{k\rightarrow\infty}\omega_{\lambda_t}(\btheta_t^k) \leq (\frac 1\eta + \rho_{+})\lim\limits_{k\rightarrow\infty}\norm{\btheta_t^{k} - \btheta_t^{k-1}}_2 = 0.
	\]
	Consequently, the limit point of the sequence $\{\btheta_t^k\}_{k = 0}^\infty$ generated by Algorithm \ref{pf_algo} will satisfy  $\lim\limits_{k\rightarrow\infty}\omega_{\lambda_t}(\btheta_t^k) = 0$, which implies that the limit point is a local solution $\widehat{\btheta}_t$ satisfying the first order optimality condition with sparsity $\norm{(\widehat{\btheta}_t)_{S^{*c}}}_0 \leq \tilde{s}.$
	
	To show the uniqueness of the local solution, suppose there exists another limit point $\widehat{\btheta}'_t$ of the sequence $\{\btheta_t^k\}_{k = 0}^\infty$ satisfying the first order optimality condition $\omega_{\lambda_t}(\widehat{\btheta}'_t) = 0$. Similarly, we can show that $\norm{(\widehat{\btheta}'_t)_{S^{*c}}}_0 \leq \tilde{s}.$ 
	By the restricted strong convexity in Assumption \ref{RSC}, we obtain
	\beq \label{thom1e2}
	R_\delta^n(\widehat{\btheta}_t) - R_\delta^n(\widehat{\btheta}'_t) \geq
	\nabla R_\delta^n(\widehat{\btheta}'_t) (\widehat{\btheta}_t - \widehat{\btheta}'_t) + \frac {\rho_{-}} {2} \norm{\widehat{\btheta}_t - \widehat{\btheta}'_t}_2^2.
	\eeq
	
	Now consider the difference of the function value $f_{\lambda_t}( \widehat{\btheta}_t) - f_{\lambda_t}(\widehat{\btheta}'_t) $, by choosing $\bxi' \in \partial \norm{\widehat{\btheta}'_t}_1$ that attains the minimum in $\omega_{\lambda_t}(\widehat{\btheta}'_t)$
	we can see that
	\beq \label{thom1e3}
	f_{\lambda_t}( \widehat{\btheta}_t) - f_{\lambda_t}(\widehat{\btheta}'_t) &= R_\delta^n(\widehat{\btheta}_t) + \lambda_t \norm{\widehat{\btheta}_t}_1 - (R_\delta^n(\widehat{\btheta}'_t) + \lambda_t \norm{\widehat{\btheta}'_t}_1)\\
	&\stackrel{(1)}{\geq} (\nabla R_\delta^n(\widehat{\btheta}'_t) + \lambda_t\bxi')^T(\widehat{\btheta}_t - \widehat{\btheta}'_t) + \frac {\rho_{-}} {2} \norm{\widehat{\btheta}_t - \widehat{\btheta}'_t}_2^2\\
	&\stackrel{(2)}{\geq} \frac {\rho_{-}} {2} \norm{\widehat{\btheta}_t - \widehat{\btheta}'_t}_2^2,
	\eeq
	where (1) follows from the convexity of the penalty function and (\ref{thom1e2}), and (2) follows from
	
	\beq
	\frac{(\widehat{\btheta}'_t - \widehat{\btheta}_t)^T}{\norm{\widehat{\btheta}'_t - \widehat{\btheta}_t}_1}(\nabla R_\delta^n(\widehat{\btheta}'_t) + \lambda_t\bxi')&'
	\leq \max_{\btheta} \bigg \{
	\frac{(\widehat{\btheta}'_t - \btheta)^T}{\norm{\widehat{\btheta}'_t - \btheta}_1}(\nabla R_\delta^n(\widehat{\btheta}'_t) + \lambda_t\bxi')
	\bigg \}\\
	&\leq 0.
	\eeq
By switching the role of $\widehat{\btheta}_t$ and $\widehat{\btheta}'_t$, we can similarly show that $f_{\lambda_t}( \widehat{\btheta}'_t) - f_{\lambda_t}(\widehat{\btheta}_t)\geq  \frac {\rho_{-}} {2} \norm{\widehat{\btheta}_t - \widehat{\btheta}'_t}_2^2$. Thus, combining with (\ref{thom1e3}), we have $\widehat{\btheta}_t = \widehat{\btheta}_t'.$ Therefore the local solution is unique.
	
	Now we start to prove the geometric convergence rate at stage t. Recall that by definition, $\btheta_t^{k+1} = \cS_{\lambda_{t} \eta}\big(\btheta_t^k\big)$ is the minimizer of the following local quadratic approximation of $f_{\lambda_t}(\btheta)$ at $\btheta_t^k$
	\beq
	\psi_\eta(\btheta,\btheta_t^k)
	:= R_\delta^n(\btheta_t^k) + \nabla R_\delta^n(\btheta_t^k)^T(\btheta - \btheta_t^k) + \frac{1}{2\eta}\norm{\btheta - \btheta_t^k}_2^2 + \lambda_t\norm{\btheta}_1.
	\eeq
	By Assumption \ref{RSC} when $\eta \leq \frac{1}{\rho_+}$, we will see that
	\beq\label{thom1e4}
	f_{\lambda_t}(\btheta_t^{k+1}) &\leq \psi_\eta(\btheta_t^{k+1},\btheta_t^k) = \min_{\btheta}\psi_\eta(\btheta,\btheta_t^k)\\
	&\leq \psi_\eta(\alpha \widehat{\btheta}_t + (1 - \alpha)  \btheta_t^{k},\btheta_t^k)
	\eeq
	for some $\alpha \in [0,1]$.
	Since we know $\norm{(\widehat{\btheta}_t )_{S^{*c}}}_0 \leq \tilde{s}$ and $\norm{(\btheta^k_t )_{S^{*c}}}_0 \leq \tilde{s}$, applying Assumption \ref{RSC} gives
	\beq\label{thom1e5}
	R_\delta^n(\alpha \widehat{\btheta}_t + (1 - \alpha)  \btheta_t^{k}) \geq R_\delta^n(\btheta_t^k) + \nabla R_\delta^n(\btheta_t^k)^T((\alpha \widehat{\btheta}_t + (1 - \alpha)  \btheta_t^{k}) - \btheta_t^k).
	\eeq
	Combining (\ref{thom1e4}) and (\ref{thom1e5}) gives
	\beq \label{thom1e6}
	&f_{\lambda_t}(\btheta_t^{k+1})\\ &\leq R_\delta^n(\alpha \widehat{\btheta}_t + (1 - \alpha)  \btheta_t^{k}) + \frac{1}{2\eta}\norm{(\alpha \widehat{\btheta}_t + (1 - \alpha)  \btheta_t^{k}) - \btheta_t^k}_2^2 + \lambda_t\norm{\alpha \widehat{\btheta}_t + (1 - \alpha)  \btheta_t^{k}}_1\\
	&\leq \alpha (R_\delta^n(\widehat{\btheta}_t )+\lambda_{t}\norm{\widehat{\btheta}_t}_1) + (1-\alpha)(R_\delta^n(\btheta_t^{k})+\lambda_{t}\norm{\btheta_t^{k}}_1) + \frac{1}{2\eta}\norm{(\alpha \widehat{\btheta}_t + (1 - \alpha)  \btheta_t^{k}) - \btheta_t^k}_2^2\\
	&= \alpha f_{\lambda_t}(\widehat{\btheta}_t) + (1-\alpha)f_{\lambda_t}(\btheta_t^{k}) + \frac{1}{2\eta}\norm{(\alpha \widehat{\btheta}_t + (1 - \alpha)  \btheta_t^{k}) - \btheta_t^k}_2^2\\
	&= f_{\lambda_t}(\btheta_t^{k}) - \alpha(f_{\lambda_t}(\btheta_t^{k}) - f_{\lambda_t}(\widehat{\btheta}_t) ) + \frac{\alpha^2}{2\eta}\norm{\widehat{\btheta}_t - \btheta_t^{k} }_2^2,
	\eeq
	where the second inequality follows from the restricted strong convexity by Assumption \ref{RSC} and the convexity of the penalty function.
	
	On the other hand, similar to (\ref{thom1e3}), we can derive that
	\beq
	f_{\lambda_t}(\btheta_t^{k} ) - f_{\lambda_t}(\widehat{\btheta}_t) \geq \frac {\rho_{-}} {2} \norm{\btheta_t^{k} - \widehat{\btheta}_t}_2^2.
	\eeq
	Plugging this into the RHS of (\ref{thom1e6}) gives
	\beq
	f_{\lambda_t}(\btheta_t^{k+1})&\leq f_{\lambda_t}(\btheta_t^{k}) - \alpha(f_{\lambda_t}(\btheta_t^{k}) - f_{\lambda_t}(\widehat{\btheta}_t) ) + \frac{\alpha^2}{\eta\rho_{-}}(f_{\lambda_t}(\btheta_t^{k} ) - f_{\lambda_t}(\widehat{\btheta}_t)).
	\eeq
	By selecting $\alpha = \frac{\eta\rho_{-}}{2} < 1$(since $\eta \leq \frac{1}{\rho_{+}}$), we get
	\beq
	f_{\lambda_t}(\btheta_t^{k+1})- f_{\lambda_t}(\widehat{\btheta}_t) \leq (1-\frac{\eta\rho_{-}}{4})(f_{\lambda_t}(\btheta_t^{k} ) - f_{\lambda_t}(\widehat{\btheta}_t)),
	\eeq
	which further implies that
	\beq
	f_{\lambda_t}(\btheta_t^{k})- f_{\lambda_t}(\widehat{\btheta}_t) \leq (1-\frac{\eta\rho_{-}}{4})^k(f_{\lambda_t}(\btheta_t^{0} ) - f_{\lambda_t}(\widehat{\btheta}_t)).
	\eeq
	This completes the proof.
	
\end{proof}

\subsection{Proof of Theorem \ref{theorem_main2}}\label{proof_theo_main2}

\begin{proof}
    First of all, by proposition \ref{prop_bias} we know with probability greater than $1 - 2d^{-1}$,  $\norm{\nabla R_\delta^n(\btheta^*) - \nabla R_\delta(\btheta^*)}_\infty \leq \lambda_{tgt}/8$ holds.
	Now we prove this theorem by induction. Notice that the initialization in Algorithm \ref{pf_algo} guarantees that
	$$\norm{(\btheta^0_0)_{S^{*c}}}_0 \leq \tilde{s}\; \text{and}\; \omega_{\lambda_0}(\btheta^0_0) \leq \frac12 \lambda_0.$$
	Suppose at stage $t = 1,\dotso,N-1$, we have
	$$\norm{(\btheta^0_t)_{S^{*c}}}_0 \leq \tilde{s}\; \text{and}\; \omega_{\lambda_t}(\btheta^0_t) \leq \frac12 \lambda_t.$$
	By Proposition \ref{prop_key}, we know $\norm{(\btheta_t^k)_{S^{*c}}}_0 \leq \tilde{s}$ for $k = 1,\dotso,$ which implies that $\norm{(\tilde{\btheta}_t)_{S^{*c}}}_0 \leq \tilde{s}$ if exists.
	Recall that at stage $t = 1,\dotso,N-1$, the stopping criteria requires $\omega_{\lambda_{t}}(\btheta) \leq \frac14\lambda_{t}$, therefore it suffices to find $k$ such that $\omega_{\lambda_{t}}(\btheta_t^{k}) \leq \frac14\lambda_{t}$ to finish stage $t$.
	By Lemma \ref{lemma_nes_subop}, we have
	
	\beq\label{thom2e1}
	\omega_{\lambda_t}(\btheta_t^{k}) \leq (\frac 1\eta + \rho_{+})\norm{\btheta_t^{k} - \btheta_t^{k-1}}_2.
	\eeq
	On the other hand, from Lemma \ref{lemma_monotone}
	\beq
	\frac{1}{2\eta}\norm{\btheta_t^{k} - \btheta_t^{k-1}}_2^2  &\leq f_{\lambda_t}(\btheta_t^{k-1}) - f_{\lambda_t}( \btheta_t^{k})\\
	&\leq 	f_{\lambda_t}(\btheta_t^{k})- f_{\lambda_t}(\widehat{\btheta}_t) \\
	&\leq (1-\frac{\eta\rho_{-}}{4})^{k-1}(f_{\lambda_t}(\btheta_t^{0} ) - f_{\lambda_t}(\widehat{\btheta}_t))\\
	&\leq (1-\frac{\eta\rho_{-}}{4})^{k-1} \frac{\bar{C}_2}{\rho_{-}}\bigg( \delta^{2\beta} \lor \sqrt{s}\delta^{\beta}\lambda_t \lor   s\lambda_t^2  \bigg),
	\eeq
	where the last inequality follows from Lemma \ref{smaller_lambda}. Now it suffices to  guarantee that
	\beq
	(\frac 1\eta + \rho_{+})\sqrt{2\eta(1-\frac{\eta\rho_{-}}{4})^{k-1} \frac{\bar{C}_2}{\rho_{-}}\bigg( \delta^{2\beta} \lor \sqrt{s}\delta^{\beta}\lambda_t \lor   s\lambda_t^2  \bigg)} \leq \frac14\lambda_{t}.
	\eeq
	Recall that we choose $\delta = c\big(\frac{s\log d }{n}\big)^{1/(2\beta + 1)}$ and $\lambda_{t} > \lambda_{tgt},$ with $c \leq C^{2/(2\beta +1)}$ and $\lambda_{tgt} = C\sqrt{\frac{\log d}{n\delta}}$. With some algebra we can show that it suffices to guarantee
	\beq \label{thom_algo_e2}
	k\geq \log\bigg (
	\frac{32(\frac 1\eta + \rho_{+})^2\eta \bar{C}_2 s}{\rho_{-}}
	  \bigg) \bigg/\log\bigg(\frac{4}{4 - \eta\rho_{-}}\bigg) + 1,
	\eeq
	where the RHS is independent of $\lambda$.
	At the same time, Lemma \ref{good_ini} guarantees that $\omega_{\lambda_{t+1}}(\tilde{\btheta}_t) \leq \frac12\lambda_{t+1}.$ By induction, we conclude that for the first $N-1$ stage of the path-following algorithm, the number of iterations is no more than
	\beq
	(N-1)\bigg[
	\log\bigg (
	\frac{32(\frac 1\eta + \rho_{+})^2\eta \bar{C}_2 s}{\rho_{-}}
	\bigg) \bigg/\log\bigg(\frac{4}{4 - \eta\rho_{-}}\bigg) + 1
	\bigg].
	\eeq
	At the last stage, the initialization guarantees that
	$$\norm{(\btheta^0_N)_{S^{*c}}}_0 \leq \tilde{s}~~~ \text{and}~~~ \omega_{\lambda_{N}}(\btheta^0_N) \leq \frac12 \lambda_N = \lambda_{tgt}.$$
	A similar derivation from above will yield that it takes no more than
	\beq
	\bigg[\log\bigg (\frac{2\eta(\frac 1\eta + \rho_{+})^2\bar{C}_2s}{\rho_{-}}\cdot \frac{\lambda_{tgt}^2}{\epsilon_{tgt}^2}\bigg) \bigg/\log\bigg(\frac{4}{4 - \eta\rho_{-}}\bigg) + 1\bigg] \lor 1
	\eeq
	iterations for the $N$th stage. Following the derivation above, it's easy to see that the final estimator $\tilde{\btheta}_{tgt}$ satisfies the conditions of Lemma \ref{lemma_norm} with $\lambda = \lambda_{tgt}$, and thus achieves the desired rate of convergence with $\delta \asymp \big(\frac{s\log d }{n}\big)^{1/(2\beta + 1)}$.
	This completes the proof.
\end{proof}

\subsection{Proof of Corollary \ref{corollary_main1}}\label{proof_coro}

\begin{proof}
By Proposition \ref{prop_key}, we know for the final path $N$, the initialization satisfies
	\beq
	\norm{(\btheta^0_N)_{S^{*c}}}_0 \leq \tilde{s}~~~ \text{and}~~~ \omega_{\lambda_{N}}(\btheta^0_N) \leq \frac12 \lambda_N \asymp \bigg(
	\frac{s\log d}{n}
	\bigg)^{\beta/(2\beta + 1)}.
	\eeq
	Hence Lemma \ref{lemma_initialization} implies the desired result.
\end{proof}

\subsection{Auxiliary results for minimax lower bound}

\subsubsection{Proof of Lemma \ref{lemma_holder_verify}}\label{proof_holder_verify}
\begin{proof}
	By the construction of $P(X,Y,\bZ)$ in the proof of Theorem \ref{theorem_minimax},
  we know $\tilde{g}_{j,\bz}(x)$ and $g(x)$ are $l = \floor{\beta}$ times differentiable. By directly checking the definition of $\cP(\beta,L)$ in Definition \ref{def_holder_new}, we obtain that for $f_j(x|Y =1,\bz),j = 1,\dotso,|\cH'|$,
	\beq
	&\bigg |\int \bv^T\bz \big [
	f_j^{(l)}(\triangle + \btheta^{*T}\bz|Y =1,\bz) - f_j^{(l)}( \btheta^{*T}\bz|Y =1,\bz)
	\big ]f(\bz|Y = 1)d\bz \bigg | \\
	\leq  & \bigg |\int \bv^T\bz \big [
	g^{(l)}(\triangle + \btheta^{*T}\bz - 2\sigma |Y =1,\bz) - g^{(l)}( \btheta^{*T}\bz - 2\sigma|Y =1,\bz)
	\big ]f(\bz|Y = 1)d\bz\bigg |\\
	&+ \bigg |\int \bv^T\bz \big [ \tilde{g}_{j,\bz}^{(l)}(\triangle + \btheta^{*T}\bz|Y =1,\bz) - \tilde{g}_{j,\bz}^{(l)}( \btheta^{*T}\bz|Y =1,\bz)
	\big ]f(\bz|Y = 1)d\bz\bigg |\\
	\stackrel{(1)}{\leq}& \frac {L}{2}|\triangle|^{\beta - l} \sqrt{\int \bv^T\bz\bz^T\bv f(\bz|Y = 1)d\bz }  \\
	&~~~+  \int \bigg | \bv^T\bz L\delta^{\beta - l} \frac{\bw_j^T\bz}{\sqrt{s}}\big [
	R^{(l)}((\triangle + \btheta^{*T}\bz - x_{j,\bz})/\delta) - R^{(l)}(( \btheta^{*T}\bz - x_{j,\bz})/\delta)
	\big ] \bigg | f(\bz|Y = 1) d\bz\\
	\stackrel{(2)}{\leq}& \frac {L}{2}|\triangle|^{\beta - l}\norm{\bv}_2  \bigg ( \sqrt{M_2}+ M_2\norm{\bw_j}_2/\sqrt{s} \bigg) \\
	\leq& L|\triangle|^{\beta - l}\norm{\bv}_2,
	\eeq
	where (1) follows from the definition of $g\in \Sigma(\beta,L/2)$, $\tilde{g}_{j,\bz}$ and Jensen's inequality and in step (2) we use the fact that $\EE[ZZ^T|Y = 1] = 1/3\II_d$ and thus $M_2=1/3$, and for the second term
	\begin{align*}
	 &\int \bigg | \bv^T\bz L\delta^{\beta - l} \frac{\bw_j^T\bz}{\sqrt{s}}\big [
	R^{(l)}((\triangle + \btheta^{*T}\bz - x_{j,\bz})/\delta) - R^{(l)}(( \btheta^{*T}\bz - x_{j,\bz})/\delta)
	\big ] \bigg | f(\bz|Y = 1) d\bz\\
	\leq &L\delta^{\beta - l}\Big[\int  (\bv^T\bz)^2f(\bz|Y = 1) d\bz\Big]^{1/2} \Big[\int\frac{(\bw_j^T\bz)^2}{{s}}(\frac{\triangle}{\delta})^{2(\beta-l)}f(\bz|Y = 1) d\bz\Big]^{1/2}
	\end{align*}
	followed by $R \in \Sigma(\beta,1/2)$.
	
	Meanwhile, for all $\bz$, $f_0(x|Y =1,\bz)$ and $f_j(x|Y= -1,\bz),j = 0,\dotso,|\cH'|$ are well defined density function in  $\Sigma(\beta,L/2)$. Therefore, following a similar derivation from above we can also show that these densities satisfy the smoothness condition defined in \ref{eq_holder_new}. Combining the  results above completes the proof.
\end{proof}

\subsubsection{Proof of Lemma \ref{lemma_sparse_mini}}\label{proof_lemma_sparse_mini}
\begin{proof}
	Let $R_j(\btheta)$ be the risk under $P_j$.
	Recall that by definition
	\[
	R_j(\btheta) = \EE_{j} \bigg[w(Y)\bigg(1 - \sign{Y(X - \btheta^T\bZ)}  \bigg)  \bigg],
	\]
	where the subscript $j$ corresponds to the probability measure $P_j$. With some algebra, under the distributional assumption the difference between $R_j(\btheta)$ and $R_j(\btheta_j)$ can be written as
	\beq
	R_j(\btheta) - R_j(\btheta_j) &= \EE_{j}\bigg[w(Y)Y(\sign{X - \btheta_j^T\bZ} - \sign{X - \btheta^T\bZ})\bigg]\\
	& = 4\int_{\cG}\sign{x - \btheta_j^T\bz}\EE_{j}[Y|x,\bz] dP_{j;X,\bZ},
	\eeq
	where
	\beq
	\cG = \{(x,\bz) | \sign{x - \btheta^T\bz} \neq \sign{x - \btheta_j^T\bz}  \}
	\eeq
	and $P_{j;X,\bZ}$ is the joint distribution of $(X,\bZ)$ under  $P_j$. Notice that
	\beq\label{equ_minimax_mini_2}
	\sign{\EE_{j}[Y|x,\bz]} &= \sign{\PP_{j}(Y = 1|x,\bz) - \PP_{j}(Y = -1|x,\bz)}\\
	& = \sign{f_j(x|Y=1,\bz) - f_j(x|Y=-1,\bz)}.\\
	\eeq
	We consider two cases:\\
	When $j = 0$, (\ref{equ_minimax_mini_2}) reduces to
	\[
	\sign{g(x - 2\sigma) -g(x + 2\sigma) } = \sign{x - \btheta_0^T\bz},
	\]
	where the equality follows from the symmetry of $g$ and $\btheta_0 = 0$. This implies that
	\beq
	R_0(\btheta) - R_0(\btheta_0) &= 4\int_{\cG}|\EE_{0}[Y|x,\bz]| dP_{0;X,\bZ} \geq 0,
	\eeq
	which further implies that $\btheta_0$ is a minimizer of $R_0(\btheta)$.
	
	When $j\neq0$ (\ref{equ_minimax_mini_2}) can be expressed as
	\[
	\sign{\EE_{j}[Y|x,\bz]} = \sign{g(x-2\sigma) - g(x + 2\sigma) + L\delta^\beta R(\frac{x - x_{i,\bz}}{\delta})\frac{\bomega_j^T\bz}{\sqrt{s}}}.
	\]
	In the following, we aim to show that \beq\label{eq_lemma_mini_1}
	\sign{\EE_{j}[Y|x,\bz]} = \sign{x - \btheta_j^T\bz}~\forall j = 1,\dotso,|\cH|.\eeq
	
	{\textbf{Step 1: }} The first step is to show that $\sign{\EE_{j}[Y|\btheta_j^T\bz,\bz]} = 0$.
	Recall that we choose $\delta \asymp \big(\frac{s\log(d/s)}{n}\big)^{1/(2\beta+1)}$ and $\btheta_j = \frac{\delta^\beta L\sigma^2 a e^{-1}}{2m\sqrt{s}} \bomega_j $. Under the conditions of Theorem \ref{theorem_minimax}, with a large enough constant $C$, $|\btheta_j^T\bz|$ will be bounded above by $\sigma$ , and therefore we obtain
	\[
	g(\btheta_j^T\bz-2\sigma) - g(\btheta_j^T\bz + 2\sigma) = g(\btheta_j^T\bz-2\sigma) - g(-\btheta_j^T\bz - 2\sigma) = \frac{2m}{\sigma^2}\btheta_j^T\bz,
	\]
	since $g$ is symmetric around 0 and linear on $[-3\sigma,-\sigma]$ by construction. In addition, we know $\forall \bz$, $\btheta_j^T\bz = x_{j,\bz} +\delta$, which implies that $R(\frac{\btheta_j^T\bz - x_{i,\bz}}{\delta}) \equiv R(1) = -a e^{-1}.$ Therefore we obtain $\forall \bz$,
	\beq\label{eq_lemma_mini_2}
	&g(\btheta_j^T\bz-2\sigma) - g(-\btheta_j^T\bz - 2\sigma) + L\delta^\beta R(\frac{\btheta_j^T\bz - x_{i,\bz}}{\delta})\frac{\bomega_j^T\bz}{\sqrt{s}}\\
	&=\frac{2m}{\sigma^2}\btheta_j^T\bz + \frac{-L\delta^\beta a e^{-1} \bomega_j^T\bz}{\sqrt{s}}\\
	&= 0,
	\eeq
	which implies the desired result.
	
	{\textbf{Step 2: }}
	Now in order to make (\ref{eq_lemma_mini_1}) holds, it's equivalent to show that $ g(x-2\sigma) - g(x + 2\sigma) + L\delta^\beta R(\frac{x - x_{i,\bz}}{\delta})\frac{\bomega_j^T\bz}{\sqrt{s}} > 0$  when $x > \btheta_j^T\bz$ and also true reversely when $x < \btheta_j^T\bz$.
	\begin{figure}[ht]
		\centering
		\includegraphics[width= 8cm,height=8cm]{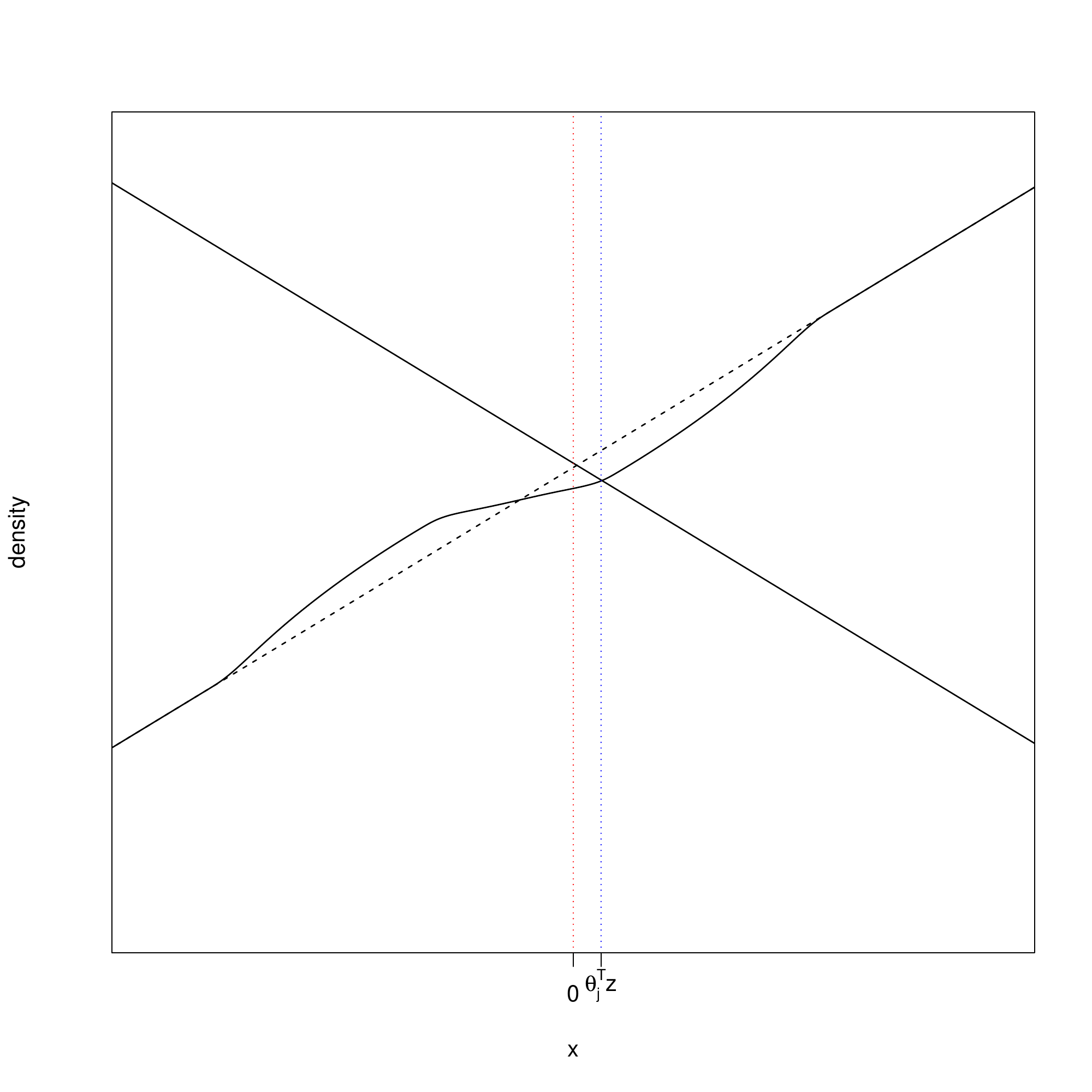}
		\caption{A demonstration of the shape of conditional density $f_j(x|Y=1,z)$ and $f_j(x|Y=-1,z)$ locally around $\btheta_j^T\bz$ with a unique intersection at $\btheta_j^T\bz$, where the strictly increasing curve corresponds to $f_j(x|Y=1,z)$.
		}\label{plot_minimax}
	\end{figure}
	
	Consider two cases:
	\begin{enumerate}
		\item When $x \in (-\infty,-\sigma] \cup [\sigma,\infty)$,  recall that by (\ref{eq_minimax_1}), $\tilde{g}_{j,\bz}(x) = 0~\forall x \in (-\infty,-\sigma] \cup [\sigma,\infty)$. Since we choose $C$ large enough to additionally ensure that $|\btheta^T_j\bz| < \sigma$, we obtain that $\forall x \in (-\infty,-\sigma] \cup [\sigma,\infty)$
		\beq
		&\sign{g(x-2\sigma) - g(x + 2\sigma) + L\delta^\beta R(\frac{x - x_{i,\bz}}{\delta})\frac{\bomega_j^T\bz}{\sqrt{s}} }\\
		=& \sign{g(x-2\sigma) - g(x + 2\sigma)}\\ =& \sign{x}\\ =&  \sign{x - \btheta^T_j\bz}.
		\eeq
		\item When $x \in (-\sigma,\sigma)$,
		it's easy to check that $\tilde{g}(x) = g(x - 2\sigma) - g(x + 2\sigma)$ is increasing linearly with gradient $\tilde{g}'(x) = \frac{2m}{\sigma^2}$. Meanwhile, with a large constant $C$, we can ensure that
		\beq\label{eq_unique_11}
		|\tilde{g}'_{j,\bz}(x)| = \bigg|L\delta^{\beta-1} R'(\frac{x - x_{i,\bz}}{\delta})\frac{\bomega_j^T\bz}{\sqrt{s}} \bigg | < \frac{m}{\sigma^2}\eeq
		uniformly on $(-\sigma,\sigma)$. This implies that $g(x-2\sigma) - g(x + 2\sigma) + L\delta^\beta R(\frac{x - x_{i,\bz}}{\delta})\frac{\bomega_j^T\bz}{\sqrt{s}} $ is a strictly increasing function on  $(-\sigma,\sigma)$, which  together with (\ref{eq_lemma_mini_2}) further implies that
		\beq
		\sign{g(x-2\sigma) - g(x + 2\sigma) + L\delta^\beta R(\frac{x - x_{i,\bz}}{\delta})\frac{\bomega_j^T\bz}{\sqrt{s}}} = \sign{x - \btheta^T_j\bz}, ~~x \in (-\sigma,\sigma).
		\eeq
		For better illustration, in Figure \ref{plot_minimax} we plot the shape of $f_j(x|Y=1,\bz)$ and $f_j(x|Y=-1,\bz)$ around 0 with a unique intersection at $\btheta_j^T\bz$, where $f_j(x|Y=1,\bz)$ is strictly increasing and $f_j(x|Y=-1,\bz)$ is strictly decreasing locally.
	\end{enumerate}
	
	Combining the above cases, we conclude that (\ref{eq_lemma_mini_1}) holds for all $j = 1,\dotso,|
	\cH|.$ Similar to the case when $j = 0$, we obtain
	\beq
	R_j(\btheta) - R_j(\btheta_j) &= 4\int_{\cG}|\EE_{j}[Y|x,\bz]| dP_{j;X,\bZ} \geq 0~~,j = 1,\dotso,|\cH|.
	\eeq


	To see the uniqueness, for any $\btheta \neq \btheta_j$, consider the set
	$$
	\cG_{\bz}=\{\bz: (\btheta-\btheta_j)^T\bz\neq 0, |\btheta^T\bz|\leq \sigma, |\btheta_j^T\bz|\leq \sigma\},
	$$
	which is the set of $\bz$ such that $\btheta^T\bz\neq \btheta_j^T\bz$. Since there exists an open neighborhood in $\cG_{\bz}$, the set $\cG_{\bz}$ has nonzero measure. Then define
	\beq
	\bar{\cG} = \big  \{
	(x,\bz): \btheta^T\bz < x < \btheta_j^T\bz~~\textrm{or}~~\btheta_j^T\bz < x < \btheta^T\bz, \bz\in\cG_{\bz}
	\big\}.
	\eeq
	By construction, we know $\bar{\cG}$ has nonzero measure and $\bar{\cG} \subset \cG$. Moreover, (\ref{equ_minimax_mini_2}) implies that on this set $\sign{\EE_{j}[Y|x,\bar{\bz}} \neq 0$, which further implies that $R(\btheta) - R(\btheta_j) > 0$ for any $\btheta \neq \btheta_j$.
	This completes the first part of the proof.

    Now we show that $\lambda_{\min}(\nabla^2R_P(\btheta_j)) \geq \rho>0$ for some $\rho$.
    Direct calculation gives that by construction
    \beq\label{eq_unique_hessian}
    \nabla^2 R(\btheta) &= \sum_{y = \pm 1}w(y)\int_{\bZ} \bz\bz^T y f'(\btheta^T\bz|\bz,y) f(\bz,y) d\bz\\
    &= 2
    \int_{\bZ} \bz\bz^T
    \big[
    f'(\btheta^T\bz|\bz,Y=1)  - f'(\btheta^T\bz|\bz,Y=-1)
    \big ]f(\bz) d\bz.
    \eeq
    When $j = 0$, we have
    \beq
    \nabla^2 R_0(\mathbf{0}) = 2
    \int_{\bZ} \bz\bz^T
    \big[
    g'(-2\sigma)  - g'(2\sigma)
    \big ]f(\bz) d\bz.
    \eeq
    Since $g'(-2\sigma)  - g'(2\sigma) = 2m/\sigma^2 >0$ by the definition of $g$, we have $$
    \lambda_{\min}(\nabla^2R_0(\mathbf{0}))=\frac{4m}{\sigma^2}\lambda_{\min}(\Cov(\bZ))=\frac{4m}{3\sigma^2}>0.
    $$
    When $j >0$, we have
    \beq
    \nabla^2 R_j(\btheta_j) = 2
    \int_{\bZ} \bz\bz^T
    \big[
    g'(\btheta_j^T\bz-2\sigma) + \tilde{g}'_{j,\bz}(\btheta_j^T\bz)  - g'(\btheta_j^T\bz + 2\sigma)
    \big ]f(\bz) d\bz.
    \eeq
    { Recall that the conditions in Theorem \ref{theorem_minimax} ensure that $|\btheta_j^T\bz|<\sigma$. Also recall that by construction, $g$ is linear on $[-3\sigma,-\sigma]$ and $[\sigma,3\sigma]$ with gradient $m/\sigma^2$ and $-m/\sigma^2$, respectively. This implies that $g'(\btheta_j^T\bz-2\sigma) - g'(\btheta_j^T\bz + 2\sigma) = 2m/\sigma^2$ for all $j>0$ and $\bz \in [-1,1]^d$, which together with (\ref{eq_unique_11}) implies that $g'(\btheta_j^T\bz-2\sigma) + \tilde{g}'_{j,\bz}(\btheta_j^T\bz)  - g'(\btheta_j^T\bz+2\sigma) >m/\sigma^2$.
    Similar to the case when $j = 0$, we can show that
    $$
    \lambda_{\min}(\nabla^2R_j(\btheta_j))>\frac{2m}{3\sigma^2}>0.
    $$
    This completes the proof.

    }

\end{proof}

\subsection{Proof for adaptive estimations}
\subsubsection{Proof of Theorem \ref{theo_adapt_1}}\label{proof_adaptive1}
\begin{proof}
	For simplicity we write $\tilde{\btheta}_{\delta,\lambda_{\delta}} = \tilde{\btheta}_{\delta}$ unless otherwise explained.
	Firstly, we define the oracle $\delta_*$ as
	\[
	\delta_* = \max \bigg \{
	\delta \in \cD, \delta^\beta \leq c'\sqrt{\frac{s\log d}{n\delta}}
	\bigg \},
	\]
	where $c'$ is to be chosen later. By definition, we have
	\beq\label{eq_ada_1}
	c'' \sqrt{\frac{s\log d}{n\delta_*}}\leq \delta_*^\beta \leq c'\sqrt{\frac{s\log d}{n\delta_*}},
	\eeq
	where $c''$ is a constant that depends on $\beta$ and $c'$.
	In the sequel, we want to show that $\hat{\delta} \geq \delta_*$ holds w.h.p.
	
	Notice that Proposition \ref{prop_var} implies that
	\beq
	\PP \bigg ( \norm{\nabla R_\delta^n(\btheta^*) - \nabla R_\delta(\btheta^*)}_\infty  > C_1\sqrt{\frac{\log d}{n\delta}}\bigg )\leq 2d^{-1},
	\eeq
	where $C_1$ is a constant.
	Therefore, with probability greater than $1 - 2\log_{2}(n)/d$, by the choice of $\lambda_{\delta} = C\sqrt{\frac{\log d}{n\delta}}$ with $C \geq 8C_1$, for each $\delta \in \cD$ in (\ref{leiski_1}), the event $\norm{\nabla R_\delta^n(\btheta^*) - \nabla R_\delta(\btheta^*)}_\infty \leq \lambda_{\delta}/8$ holds.
	Now on this event, by choosing $c'$ no greater than $C$, we can ensure that
	\beq
	\delta_*^\beta \leq c'\sqrt{\frac{s\log d}{n\delta_*}}\leq  C\sqrt{\frac{s \log d}{n\delta_*}} = \sqrt{s}\lambda_{\delta_*}.
	\eeq
	Then for each $\delta \leq \delta_*, \delta \in \cD$, this further implies that
	\beq\label{lepski_1_eq1}
	\delta^{\beta} \leq \delta_*^{\beta} \leq \sqrt{s}\lambda_{\delta_*} \leq \sqrt{s}\lambda_{\delta}.
	\eeq
	In particular, $\lambda_\delta\geq \lambda_{\delta_*}=C\sqrt{\frac{\log d}{n\delta_*}}$.
	By choosing the constant $C$ large enough and by (\ref{eq_ada_1}), this ensures $\lambda_\delta\geq \lambda_{tgt}$. Thus, on the event defined above, Lemma \ref{lemma_initialization} implies $\norm{\tilde{\btheta}_{\delta} - \btheta^*}_2 \leq \frac{\bar{C}_1}{\rho_{-}}\big( \delta^{\beta} \lor \sqrt{s}\lambda \big)$ for each $\delta \leq \delta_*, \delta \in \cD$.
	Therefore, we conclude that with probability greater than $1 - 2\log_{2}(n)/d$, for each $\delta \leq \delta_*, \delta \in \cD$ with corresponding choice of $\lambda_{\delta}$, it holds that
	\beq
	\norm{\tilde{\btheta}_{\delta} - \tilde{\btheta}_{\delta_*}}_2 &\leq
	\norm{\tilde{\btheta}_{\delta} - \btheta^*}_2 + \norm{\tilde{\btheta}_{\delta_*} - \btheta^*}_2\\
	& \leq \frac{\bar{C}_1}{\rho_-}\big[ (\delta^{\beta} \lor \sqrt{s}\lambda_{\delta}) + (\delta_*^{\beta} \lor \sqrt{s}\lambda_{\delta_*})\big]\\
	&\leq \frac{2\bar{C}_1}{\rho_-}\sqrt{s}\lambda_{\delta}\\
	&\leq c\sqrt{\frac{s \log d}{n\delta}},
	\eeq
	where the third inequality follows from (\ref{lepski_1_eq1}), and the last follows by choosing the constant $c$ in (\ref{leiski_1}) large enough.
	This implies that $\hat{\delta} \geq \delta_*$ by the definition of $\hat{\delta}$. Thus we obtain
	\beq
	\norm{\tilde{\btheta}_{\hat{\delta}} - \btheta^*}_2 &\leq \norm{\tilde{\btheta}_{\hat{\delta}} - \tilde{\btheta}_{\delta_*}}_2 + \norm{\tilde{\btheta}_{\delta_*} - \btheta^*}_2\\
	&\lesssim \sqrt{\frac{s\log d}{n\delta_*}} + \norm{\tilde{\btheta}_{\delta_*} - \btheta^*}_2\\
	&\lesssim \bigg(\frac{s\log d  }{n}\bigg)^{\beta/(2\beta+1)},
	\eeq
	where the second inequality follows from the definition of $\hat{\delta}$  and the last inequality follows from (\ref{eq_ada_1}) and Theorem \ref{theorem_main2}. This completes the proof.
\end{proof}

\subsubsection{Proof of Theorem \ref{theo_adapt_2}}\label{proof_adaptive2}
\begin{proof}
	This proof is very similar to the proof for Theorem \ref{theo_adapt_1}. We first define the oracle choice of $s_*$ as the largest number in $\cD'$ no greater than $s$,
	\beq
	s_* = \max\{s' \in \cD': s' \leq s\} .
	\eeq
	By definition, we have
	\beq\label{lepski_oracleeq_2}
	\frac12 s < s_* \leq s.
	\eeq
	In the sequel, we want to show that $s_* \geq \hat{s}$ holds w.h.p.
	Notice that Proposition \ref{prop_var} implies that
	\beq
	\PP \bigg ( \norm{\nabla R_\delta^n(\btheta^*) - \nabla R_\delta(\btheta^*)}_\infty  > C_1\sqrt{\frac{\log d}{n\delta}}\bigg )\leq 2d^{-1},
	\eeq
	where $C_1$ is a constant.
	Therefore, with probability greater than $1 - 2\log_{2}(d)/d$, by the choice of $\lambda_{s'} = C\sqrt{\frac{\log d}{n\delta_{s'}}}$ with $C \geq 8C_1$,  for each $s' \in \cD'$ in (\ref{leiski_2}),  the condition $\norm{\nabla R_{\delta_{s'}}^n(\btheta^*) - \nabla R_{\delta_{s'}}(\btheta^*)}_\infty \leq \lambda_{s'}/8$ holds. On this event, the choice of $\delta_{s'}$ in (\ref{leiski_2}) together with Lemma \ref{lemma_initialization} further implies that for each $s' \geq s_*$,
	\beq
	\norm{\tilde{\btheta}_{\delta_{s_*},\lambda_{s_*}} - \tilde{\btheta}_{\delta_{s'},\lambda_{s'}}}_2 &\leq \norm{\tilde{\btheta}_{\delta_{s_*},\lambda_{s_*}}  - \btheta^*}_2 + \norm{\tilde{\btheta}_{\delta_{s'},\lambda_{s'}} - \btheta^*}_2\\
	&\leq \frac{\bar{C}_1}{\rho_{-}}\bigg[
	(\delta_{s_*}^{\beta} \lor \sqrt{s_*}\lambda_{s_*}) + (\delta_{s'}^{\beta} \lor \sqrt{s'}\lambda_{s'})
	\bigg]\\
	&\leq \frac{\bar{2C}_1}{\rho_{-}}(\delta_{s'}^{\beta} \lor \sqrt{s'}\lambda_{s'})\\
	&\leq \bar{c} \bigg(
	\frac{s' \log d}{n}
	\bigg)^{\beta/(2\beta+1)},
	\eeq
	where the last step is achieved by choosing the constant $\bar{c}$ in (\ref{leiski_2}) large enough. By the definition of $\hat{s}$, we conclude that $s_* \geq \hat{s}$  with probability greater than $1 - 2\log_{2}(d)/d$.
	Now we have
	\beq
	\norm{\tilde{\btheta}_{\hat{s},\lambda_{\hat{s}}} - \btheta^*}_2 & \leq
	\norm{\tilde{\btheta}_{\hat{s},\lambda_{\hat{s}}} - \tilde{\btheta}_{\delta_{s_*},s_*}}_2 + \norm{\tilde{\btheta}_{\delta_{s_*},s_*} - \btheta^*}_2\\
	&\lesssim \bigg(\frac{s_*\log d  }{n}\bigg)^{\beta/(2\beta+1)}\\
	&\lesssim \bigg(\frac{s\log d  }{n}\bigg)^{\beta/(2\beta+1)},
	\eeq
	where the second inequality follows from the definition of $\hat{s}$ for the first term and Lemma \ref{lemma_initialization} for the second term, and the last inequality follows from (\ref{lepski_oracleeq_2}). The proof is complete.
	
\end{proof}

\section{Verification of RSC/RSM conditions in specific models}\label{app_RSC}
In this section, we verify that the restricted strong convexity and restricted smoothness condition hold w.h.p for the class of conditional mean model introduced in Section \ref{simulation}. For simplicity, we assume $Y\sim \text{Uniform}(\{-1,1\})$, $w(y) = 1/{\PP(Y = y)} = 2$ is known. Recall the model is defined as
\beq\label{def_condi_mean}
X = \btheta^{*T}\bZ + \mu Y + u.
\eeq
Without loss of generality, we can rescale the above model such that $var(u)=1$. In addition, assume $\bZ\in \RR^d$ is  a zero-mean sub-Gaussian vector with parameter $\sigma^2$, $\bZ \independent u \sim N(0,1)$ and $\mu >0$. Notice that in this case, the conditional density $f(x|y,\bz) \in C^{\infty}$ and thus the smoothness parameter can be arbitrarily large.
We set $\Omega = \{\btheta:\norm{\btheta}_2 \leq R\}$ where $\|\btheta^*\|_2\leq R$ and apply a ``proper'' kernel function $K$ defined in Definition \ref{def_kernel1} satisfying (i) $K$ has bounded support on $[-1,1]$, (ii)  $\norm{K}_\infty,\norm{K'}_\infty$ and $-\int K'(t)tdt > 0$ are bounded above by universal constants.  Common kernels constructed by orthogonal polynomial basis will satisfy this condition \citep{tsybakovintroduction}.
To verify that Assumption \ref{RSC} holds w.h.p, it suffices to show that the following sparse eigenvalue condition holds w.h.p:

\beq
&\rho_{\max} = \sup\bigg \{
\bv^T\nabla^2R_\delta^n(\btheta)\bv: \norm{\bv}_2 = 1,\norm{\bv}_0 \leq Cs, \btheta \in \Omega,\norm{\btheta}_0\leq Cs \bigg\} < +\infty,\\
&\rho_{\min} = \inf\bigg \{
\bv^T\nabla^2R_\delta^n(\btheta)\bv: \norm{\bv}_2 = 1,\norm{\bv}_0 \leq Cs, \btheta \in \Omega,\norm{\btheta}_0\leq Cs
\bigg \} > 0
\eeq
are two constants. We let $\bSigma_{\bZ}=\Cov(\bZ)$ and $\phi(\cdot)$ be the p.d.f of the error $u$.

\begin{proposition}\label{prop_verify_main}
	Under the setup above, suppose \beq\label{eq_prop_verify_0}
	c\sigma^2\exp\bigg(
	-\frac{c'T^2}{\sigma^2R^2}
	\bigg) \leq \frac{\tilde{K}\tilde{\phi'}}{2\tilde{K}(\tilde{\phi'} + \norm{\phi'}_\infty)}\lambda_{\min}(\bSigma_{\bZ})~~~\text{and}~~~ |\delta + T|\leq \mu/2,
	\eeq
	where $\tilde{K} = -\int K'(t)tdt>0$,  $\tilde{\phi'} = \min_{u \in [\frac{-3\mu}{2},\frac{-\mu}{2}]}\phi'(u)>0$, $T,c,c'$ are positive constants. If
	\[
	n \geq C\bigg(
	\frac{s\log((ds\log d)^2/\delta)}{\delta^3} \lor \frac{s^2\log((ds\log d)^2/\delta)\log(d\lor n)}{\delta^2}
	\bigg),
	\]
	for some constant $C$ large enough, then with probability greater than $1 - c(\log n)^{-1}$, it holds that
	\beq
	\rho_{\min} \geq \frac12\tilde{K}\tilde{\phi'}\lambda_{\min}(\bSigma_{\bZ}) > 0,~~~
	\rho_{\max} \leq  \frac52\tilde{K} \norm{\phi'}_\infty \lambda_{\max}(\bSigma_{\bZ}) < +\infty.
	\eeq
\end{proposition}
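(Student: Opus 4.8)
The plan is to separate a deterministic population-level bound on the sparse quadratic form from a uniform empirical-deviation bound, the standard route to verifying sparse eigenvalue / RSC conditions \citep{loh2013regularized,wang2013calibrating}. I would first record the closed forms obtained by differentiating $R_\delta^n$ twice: using $y_i^2=1$ and $L_{\delta,K}''(v)=-\delta^{-1}K'(v/\delta)$,
\[
\nabla^2 R_\delta^n(\btheta)=-\frac{1}{n\delta^{2}}\sum_{i=1}^{n}w(y_i)\,\bz_i\bz_i^{T}\,K'\!\Big(\tfrac{y_i(x_i-\btheta^{T}\bz_i)}{\delta}\Big),
\]
with population analogue $\nabla^2 R_\delta(\btheta)=-\delta^{-2}\,\EE[w(Y)\bZ\bZ^{T}K'(Y(X-\btheta^{T}\bZ)/\delta)]$. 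It then suffices to show, uniformly over all $\btheta\in\Omega$ and unit $\bv$ with $\norm{\btheta}_0,\norm{\bv}_0\le 2Cs$, (i) that $\bv^{T}\nabla^2 R_\delta(\btheta)\bv$ lies between $\tfrac34\tilde K\tilde{\phi'}\lambda_{\min}(\bSigma_{\bZ})$ and $2\tilde K\norm{\phi'}_\infty\lambda_{\max}(\bSigma_{\bZ})$, and (ii) that the supremum of $|\bv^{T}(\nabla^2 R_\delta^n(\btheta)-\nabla^2 R_\delta(\btheta))\bv|$ over the same set is below $\tfrac14\tilde K\tilde{\phi'}\lambda_{\min}(\bSigma_{\bZ})$ (for the lower bound) and $\tfrac12\tilde K\norm{\phi'}_\infty\lambda_{\max}(\bSigma_{\bZ})$ (for the upper bound) on the stated event. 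Adding (i) and (ii) yields the two displayed bounds on $\rho_{\min}$ and $\rho_{\max}$.

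\textbf{Population bound.} Conditioning on $\bZ$ and substituting $X=\btheta^{*T}\bZ+\mu Y+u$ with $u$ of density $\phi$, each sign-of-$Y$ piece becomes, after $t=\delta s$, $\delta^{-1}(\bv^{T}\bZ)^{2}\int K'(s)\phi(\delta s\mp\mu-a)\,ds$ with $a:=(\btheta-\btheta^{*})^{T}\bZ$; integration by parts (boundary terms vanish since $K$ is supported on $[-1,1]$) plus a first-order Taylor expansion of $\phi$, using $\int K=\tilde K$ and $\int sK'=-\tilde K$, reduces this to $\tilde K(\bv^{T}\bZ)^{2}\phi'(\mp\mu-a)$ up to an $O(\delta)$ remainder, so that $\bv^{T}\nabla^2 R_\delta(\btheta)\bv=\tilde K\,\EE[(\bv^{T}\bZ)^{2}(\phi'(-\mu-a)-\phi'(\mu-a))]+O(\delta)$. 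On the event $\{|a|\le T\}$ the hypothesis $|\delta+T|\le\mu/2$ forces the arguments of $\phi'$ into $[-\tfrac32\mu,-\tfrac12\mu]$ and $[\tfrac12\mu,\tfrac32\mu]$, so by oddness of $\phi'$ we have $\phi'(-\mu-a)\ge\tilde{\phi'}$ and $-\phi'(\mu-a)\ge\tilde{\phi'}$, both bounded by $\norm{\phi'}_\infty$; on $\{|a|>T\}$, since $a$ is sub-Gaussian with parameter $\lesssim R^{2}\sigma^{2}$ (as $\norm{\btheta-\btheta^{*}}_2\le 2R$ and $\bZ$ is sub-Gaussian), Cauchy--Schwarz and a tail bound give $\EE[(\bv^{T}\bZ)^{2}\mathds{1}\{|a|>T\}]\lesssim\sigma^{2}\exp(-c'T^{2}/(\sigma^{2}R^{2}))$. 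The condition (\ref{eq_prop_verify_0}) makes this correction together with the $O(\delta)$ term at most a quarter of $\tilde K\tilde{\phi'}\lambda_{\min}(\bSigma_{\bZ})$, which, combined with $\EE(\bv^{T}\bZ)^{2}\in[\lambda_{\min}(\bSigma_{\bZ}),\lambda_{\max}(\bSigma_{\bZ})]$, proves (i).

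\textbf{Uniform concentration.} Fix a support set $S$ with $|S|\le 2Cs$; on $\RR^{S}$ the map $(\btheta,\bv)\mapsto\bv^{T}\nabla^2 R_\delta^n(\btheta)\bv$ is Lipschitz in $\btheta$ with constant controlled by the smoothness of $K$, by $\delta^{-3}$, and by cubes of (truncated) coordinates of $\bZ$, so I would net $\Omega\cap\RR^{S}$ and the unit sphere of $\RR^{S}$ at a polynomially small resolution, control the supremum by its net values plus a Lipschitz slack, and at each net point apply Bernstein after truncating the coordinates of $\bZ$ at level $\asymp\sigma\sqrt{\log(d\vee n)}$. The truncated summands are then bounded by $\asymp s\sigma^{2}\log(d\vee n)\,\delta^{-2}$ and have variance $\asymp\sigma^{4}\delta^{-3}$ (the extra $\delta$ gained because $K'(\cdot)$ localizes $x_i-\btheta^{T}\bz_i$ to an interval of width $\asymp\delta$), while the combined log-cardinality of the nets over all $\binom{d}{2Cs}$ support sets is $\asymp s\log((ds\log d)^{2}/\delta)=:L_n$; Bernstein and a union bound then bound the deviation by $\lesssim\sqrt{\sigma^{4}L_n/(n\delta^{3})}+s\sigma^{2}\log(d\vee n)L_n/(n\delta^{2})$, and requiring this to fall below the constants in (ii) produces exactly the stated sample-size condition, with the residual probability $1-c(\log n)^{-1}$ inherited from the union bound over the nets and the truncation event.

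\textbf{Main obstacle.} The delicate part is the uniform concentration: the index set is a union of exponentially many low-dimensional pieces and is non-convex, and the $\delta^{-2}$/$\delta^{-3}$ blow-ups of the summands' sup-norm and variance interact with the post-squaring heavy tails of the sub-Gaussian covariates, so the truncation level, the net resolution and the Bernstein bookkeeping must be balanced carefully against one another to land on the claimed $n$; the population step (i) is comparatively routine once the change of variables and the good/bad-event split are set up.
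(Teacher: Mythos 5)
Your two-step decomposition --- a deterministic bound on the population sparse quadratic form plus a uniform bound on the empirical deviation --- is exactly the paper's structure, and your population step is essentially identical to the paper's Lemma \ref{prop_verify}: change of variables, use of the symmetry of $K'$, a Taylor/mean-value expansion of $\phi$, and the good/bad-event split on $\{|(\btheta-\btheta^*)^T\bz|\le T\}$ with the sub-Gaussian tail absorbed by condition (\ref{eq_prop_verify_0}). (The paper applies the mean value theorem directly, so the $\delta t$ shift is absorbed into the interval $[-3\mu/2,-\mu/2]$ by the hypothesis $|\delta+T|\le\mu/2$ rather than appearing as a separate $O(\delta)$ remainder, but this is only bookkeeping.)

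Where you genuinely diverge is the concentration step. The paper does not net the parameter space; it casts the supremum as an empirical process over the function class $\cF=\{-y(\bv^T\bz)^2K'((x-\btheta^T\bz)/\delta)\}$, bounds its uniform covering numbers by writing $\cF$ as a pointwise product of VC-subgraph classes (using that $K'$ has \emph{bounded variation}, via Lemma 2.6.18 of van der Vaart and Wellner), computes the envelope and weak-variance parameters ($\rho^2\asymp\delta$, $\|F\|_{P,2}^2\lesssim(s\log d)^2$), and invokes the Chernozhukov--Chetverikov--Kato maximal inequality, from which the $1-c(\log n)^{-1}$ probability and the two-term rate $\sqrt{s\log(\cdot)/(n\delta^3)}+s^2\log(\cdot)\log(d\vee n)/(n\delta^2)$ fall out directly. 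Your net-plus-truncation-plus-Bernstein route reaches the same rate and is more elementary, but note one mismatch with the paper's hypotheses: controlling the slack between net points requires the map $\btheta\mapsto K'((x-\btheta^T\bz)/\delta)$ to be Lipschitz, i.e.\ $K''$ bounded, whereas the paper only assumes $\|K'\|_\infty<\infty$ and bounded variation of $K'$. Under the paper's stated kernel conditions your discretization argument does not go through as written; you would either need to strengthen the kernel assumption or replace the Lipschitz slack control by a bracketing/VC argument, which is precisely what the paper's route buys. Conversely, your approach avoids importing the CCK machinery and, if one is willing to assume a smooth kernel, would likely yield a sharper failure probability than $c(\log n)^{-1}$.
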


\begin{proof}(Proof of Proposition \ref{prop_verify_main})
	Firstly,
the following lemma which will be proved later shows that the population risk $R_\delta = \EE[R_\delta^n]$ is strongly convex and smooth for all $\btheta \in \Omega$:
\begin{lemma}\label{prop_verify}
	Suppose
	\beq
	c\sigma^2\exp\bigg(
	-\frac{c'T^2}{\sigma^2R^2}
	\bigg) \leq \frac{\tilde{K}\tilde{\phi'}}{2\tilde{K}(\tilde{\phi'} + \norm{\phi'}_\infty)}\lambda_{\min}(\bSigma_{\bZ})~~~\text{and}~~~ |\delta + T|\leq \mu/2,
	\eeq
	where $\tilde{K} = -\int K'(t)tdt$,  $\tilde{\phi'} = \min_{u \in [\frac{-3\mu}{2},\frac{-\mu}{2}]}\phi'(u)$, $T,c,c'$ are positive constants. Then for all unit vector $\bv \in \RR^d$ the population risk $R_\delta$ satisfies
	\beq
	\bv^T\nabla^2R_\delta (\btheta)\bv \geq \tilde{K}\tilde{\phi'}\lambda_{\min}(\bSigma_{\bZ}) > 0,~~~
	\bv^T\nabla^2R_\delta (\btheta)\bv \leq  2\tilde{K} \norm{\phi'}_\infty \lambda_{\max}(\bSigma_{\bZ}) < +\infty.
	\eeq
\end{lemma}
Taking Lemma \ref{prop_verify} as given for now, suppose $\{(x_i,y_i,\bz_i)\}_{i = 1}^n$ is drawn i.i.d from the conditional mean model (\ref{def_condi_mean}).
Now consider the class of functions
\[
\cF = \bigg \{
f_{\bv,\btheta}(x,y,\bz) = -y(\bv^T\bz)^2 K'\bigg ( \frac{x - \btheta^T\bz}{\delta}\bigg):  \bv,\btheta \in \RR^d, \norm{\bv}_2 = 1,\btheta \in \Omega,\norm{\bv}_0,\norm{\btheta}_0 \leq Cs
\bigg
 \}.
\]
Our goal reduces to control the empirical process
\beq
\norm{\bG_n}_{\cF} &= \sup_{f\in \cF} \bigg|n^{-1/2}\sum_{i = 1}^n\bigg(
f(x_i,y_i,\bz_i) - \EE(f(X,Y,{\bZ}))
\bigg)\bigg |\\
&=n^{1/2}\delta^{2} \sup_{ \norm{\bv}_2 = 1, \norm{\bv}_0 \leq Cs}
\sup_{\btheta \in \Omega, \norm{\btheta}_0 \leq Cs}
\bigg |
\bv^T(\nabla^2R_\delta^n(\btheta) - \nabla R_\delta(\btheta))\bv
\bigg |.
\eeq
To achieve this, we will apply the following maximal inequality due to \citet{chernozhukov2014gaussian}.
\begin{lemma}\label{lemma_emp} Based on the setup above,
	suppose that $F \geq \sup_{f\in \cF}|f|$ is a measurable envelope with $\norm{F}_{P,2} < \infty$. Let $M = \max_{i\leq n}F(x_i,y_i,\bz_i)$ and
 $\rho^2$ be  any positive number such that $\sup_{f\in \cF} \norm{f}^2_{P,2} \leq \rho^2 \leq \norm{F}_{P,2}^2.$
	Suppose there exist $a\geq e$ and $\nu \geq 1$ such that
	\[
	\log\sup_Q N(\epsilon\norm{F}_{Q,2},\cF,\norm{.}_{Q,2}) \leq \nu(\log (a/\epsilon)),~~~\epsilon \in (0,1],
	\]
	then
	\[
	\EE_P[\norm{\bG_n}_{\cF}] \leq K\bigg(
	\rho\sqrt{\nu\log(a\norm{F}_{P,2}/\rho)} + \frac{\nu\norm{M}_{P,2}}{\sqrt{n}}\log(a\norm{F}_{P,2}/\rho)
	\bigg).
	\]
	Moreover, when $a \geq n$, with probability greater than $1 - c(\log n)^{-1}$,
	\[
	\norm{\bG_n}_{\cF} \leq K(q,c)\bigg(
	\rho\sqrt{\nu\log(a\norm{F}_{P,2}/\rho)} + \frac{\nu\norm{M}_{P,q}}{\sqrt{n}}\log(a\norm{F}_{P,2}/\rho)
	\bigg),
	\]
	where $K,K(q,c)$ are absolute constants, $Q$ is any finitely discrete probability measure, $N(\epsilon,\cF,\norm{.})$ is the covering number w.r.t the class $\cF$ and radius $\epsilon$, and $\norm{f}_{P,p} = (\int |f|^pdP)^{1/p}.$
\end{lemma}
Write
\beq
&\cF_1(q) = \{(x,y,\bz)  \rightarrow (\bv^T\bz)^q:\norm{\bv}_2 = 1,\norm{\bv}_0 \leq Cs\},\\&\cF_2 = \{
(x,y,\bz) \rightarrow K'(x - \btheta^T\bz/\delta):\btheta \in \Omega,\norm{\btheta}_0\leq Cs
\},\\
&\cF_3 = \{(x,y,\bz)  \rightarrow y\}.
\eeq
For $\cF_3$ it holds trivially that
\[
\log\sup_Q N(\epsilon,\cF_3,\norm{.}_{Q,2}) \lesssim \log(e/\epsilon).
\]
For $\cF_1(1)$, we know it is contained in the union of at most $\binom{d}{Cs}$ VC-subgraph classes of functions with VC indices bounded by $C's$, and thus for the envelope $F_1 = \sup_{\norm{\bv}_2 = 1,\norm{\bv}_0 \leq Cs} \norm{\bv^T\bz}_2 $, we have
\[
\log\sup_Q N(\epsilon\norm{F_1}_{Q,2},\cF_1(1),\norm{.}_{Q,2}) \lesssim
(s\log(d) + \log(e/\epsilon)).
\]
For $\cF_2$, notice that $\cF'_{2}:= \{(x,y,\bz)  \rightarrow x - \btheta^T\bz\}$ is contained in the union of at most $\binom{d}{Cs}$ VC-subgraph classes of functions with VC indices bounded by $C's$. By assumption $K'$ is of bounded variation, i.e., it can be written as the difference of two bounded nondecreasing functions. By Lemma 2.6.18 of \citet{vanweak} the VC-subgraph property is preserved and we obtain
\[
\log\sup_Q N(\epsilon\norm{K'}_{\infty},\cF_2,\norm{.}_{Q,2}) \lesssim
(s\log(d) + \log(e/\epsilon)).
\]
Since $\cF$ is contained in the class of pointwise product $\cF_1(2)\cdot \cF_2\cdot\cF_3$, applying Lemma A.6 of \citet{chernozhukov2014gaussian}, we obtain that for the choice of envelope
\[
F = F_1^2\norm{K'}_\infty = \sup_{\norm{\bv}_2 = 1,\norm{\bv}_0 \leq Cs} |\bv^T\bz|^2\norm{K'}_{\infty}
\]
it holds that
\beq
\log\sup_Q N(\epsilon\norm{F}_{Q,2},\cF,\norm{.}_{Q,2}) \lesssim
(s\log(d) + \log(e/\epsilon)).
\eeq
Direct calculation gives that
\beq
\sup_{f\in \cF}\norm{f}^2_{P,2} \leq 16 \delta\norm{K'}^2_{\infty}\norm{\phi}_{\infty}\sigma^4 =: \rho^2.
\eeq

Now for any sparse index set $S \subseteq \{1,\dotso,d \} ,|S|\leq Cs$, consider the sphere $\mathcal{S}_S =\{ \bv : \norm{\bv}_2 = 1, supp(\bv) \subseteq S  \}$.
Let $\cN^{(\mathcal{S}_S)}_{1/6} \subseteq \mathcal{S}_S$ denote a $1/6$-net of $\cS_s$ such that $\forall~\bv \in \mathcal{S}_S$, $\exists \bu \in \cN^{(\mathcal{S}_S)}_{1/6}$ with $\norm{\bu - \bv} \leq 1/6$. We know the cardinality of $\cN^{(\mathcal{S}_S)}_{1/6}$ is no larger than $18^{Cs}$ and 	
\beq
\sup_{\bv \in \mathcal{S}_S }
|\bv^T{\bZ}|^4
\leq c \max_{\bv \in \cN^{(\mathcal{S}_S)}_{1/6} }
|\bv^T{\bZ}|^4
\eeq
for some absolute constant $c$.
Since there are no more than $\binom{d}{Cs}$ number of such sparse index sets, we know the cardinality of $$\bar{\mathcal{S}} = \bigcup_{\substack{S \subseteq \{1,\dotso,d \} ,|S|\leq Cs}} \cN^{(\mathcal{S}_S)}_{1/6}$$
is no larger than $(18d)^{Cs}$. Therefore we obtain
\beq
\norm{F}_{P,2}^2 &= \norm{K'}_\infty^2\EE_P\bigg[\sup_{\norm{\bv}_2 = 1,\norm{\bv}_0 \leq Cs} |\bv^T{\bZ}|^4\bigg]\\
&\lesssim \norm{K'}_\infty^2\EE_P\bigg[\max_{\norm{\bv}\in \bar{\mathcal{S}}} |\bv^T{\bZ}|^4\bigg]\\
&\lesssim  \norm{K'}_\infty^2\sigma^4(s\log(d))^2,
\eeq
where the last inequality follows from the proof of Lemma 14.12 in \citet{buhlmann2011statistics}. With a similar derivation, we can also show that
\beq
\norm{M}_{P,2}^2 = \norm{\max_{i\leq n}F(x_i,y_i,\bz_i)}_{P,2}^2 \lesssim \norm{K'}_\infty^2\sigma^4(s\log(d\lor n))^2.
\eeq
Plugging the above display into Lemma \ref{lemma_emp}, we obtain that with probability greater than $1 - c(\log n)^{-1}$
\beq
&\sup_{ \norm{\bv}_2 = 1, \norm{\bv}_0 \leq Cs}
\sup_{\btheta \in \Omega, \norm{\btheta}_0 \leq Cs}
\bigg |
\bv^T(\nabla^2R_\delta^n(\btheta) - \nabla R_\delta(\btheta))\bv
\bigg | \\
\lesssim &
\sqrt{\frac{s\log((ds\log d)^2/\delta)}{n\delta^3} } + \frac{s^2\log((ds\log d)^2/\delta)\log(d\lor n)}{n\delta^2}.
\eeq
Therefore, under the conditions of Proposition \ref{prop_verify_main}, we may guarantee that w.h.p.
\[
\sup_{ \norm{\bv}_2 = 1, \norm{\bv}_0 \leq Cs}
\sup_{\btheta \in \Omega, \norm{\btheta}_0 \leq Cs}
\bigg |
\bv^T(\nabla^2R_\delta^n(\btheta) - \nabla R_\delta(\btheta))\bv
\bigg | \leq \frac12 \tilde{K}\tilde{\phi}'\lambda_{\min}(\bSigma_{\bZ}),
\]
which implies that w.h.p.
\beq
\rho_{\min} \geq \frac12\tilde{K}\tilde{\phi'}\lambda_{\min}(\bSigma_{\bZ}) > 0,~~~
\rho_{\max} \leq  \frac52\tilde{K} \norm{\phi'}_\infty \lambda_{\max}(\bSigma_{\bZ}) < +\infty.
\eeq
This completes the proof.

\end{proof}

\subsection{Proof of Lemma \ref{prop_verify}}
\begin{proof}
	For any $\btheta \in \Omega$ and $\bv$ such that $\norm{\bv}_2 =1$ and $\norm{\bv}_0 \leq Cs$, with some algebra we can show that
	\beq
	\bv^T\nabla^2R_\delta (\btheta)\bv = -\frac{1}{\delta^2} \int (\bv ^T\bz)^2K'(\frac{x - \btheta^T\bz}{\delta})\bigg(
	\phi(x - \btheta^{*T}\bz - \mu) - \phi(x - \btheta^{*T}\bz + \mu)
	\bigg)f(\bz) dxd\bz,
	\eeq
	where $f$ is the p.d.f for ${\bZ}$.  With some change of variable, this can be further expressed as
	\beq
	-\frac{1}{\delta} \int (\bv ^T\bz)^2K'(t)\bigg(
	\phi(\delta t + \btheta^T\bz - \btheta^{*T}\bz - \mu) - \phi(\delta t + \btheta^T\bz - \btheta^{*T}\bz + \mu)
	\bigg)f(\bz) dtd\bz.
	\eeq
	By applying the mean value theorem on each of $\phi$ and the fact that $K'$ is an odd function, we finally obtain that
	\beq
	&\bv^T\nabla^2R_\delta (\btheta)\bv \\ =& -\int (\bv^T\bz)^2K'(t)t\bigg(
	\phi'(\tau \delta t + \btheta^T\bz - \btheta^{*T}\bz - \mu) + \phi'(-\tau'\delta t - \btheta^T\bz + \btheta^{*T}\bz - \mu)
	\bigg)f(\bz) dtd\bz,
	\eeq
	where $0\leq \tau,\tau' \leq 1$. Recall that $\tilde{K} = -\int K'(t)tdt > 0$.
	For the smoothness, since $\phi'$ and $\tilde{K}$ are bounded, we obtain that
	\beq
	\bv^T\nabla^2R_\delta (\btheta)\bv \leq 2\tilde{K} \norm{\phi'}_\infty \lambda_{\max}(\bSigma_{\bZ}).
	\eeq
	For the strong convexity, we apply a standard truncation argument. Consider the event
	$$A = \{ |\bz^T(\btheta - \btheta^*)| \leq T/(2R)\norm{\btheta - \btheta^*}_2  \}$$
	where $T$ is some constant. On this event, it follows by definition that $|\bz^T(\btheta - \btheta^*)|  \leq T.$
	This together with the condition (\ref{eq_prop_verify_0}) and $t\in[-1,1]$ by the choice of the kernel function can ensure that
	\[ \tau \delta t +\btheta^T\bz - \btheta^{*T}\bz - \mu,-\tau'\delta t -\btheta^T\bz + \btheta^{*T}\bz - \mu \in  [\frac{-3\mu}{2},\frac{-\mu}{2}].	
	\]
	
	Therefore we obtain
	\beq\label{eq_prop_verify_2}
	\bv^T\nabla^2R_\delta (\btheta)\bv
	&\geq 2\tilde{K}\tilde{\phi'}\EE\bigg[
	(\bv^T {\bz})^2\mathds{1}(A)
	\bigg] -  2\tilde{K} \norm{\phi'}_\infty \EE\bigg[
	(\bv^T {\bz})^2 \mathds{1}(A^c)
	\bigg]\\
	&\geq 2\tilde{K}(\tilde{\phi'}+ \norm{\phi'}_\infty)   \EE\bigg[
	(\bv^T {\bz})^2\mathds{1}(A)
	\bigg]-  2\tilde{K} \norm{\phi'}_\infty \EE\bigg[
	(\bv^T {\bz})^2
	\bigg]\\
	& = 2\tilde{K}\tilde{\phi'}\EE\bigg[
	(\bv^T {\bz})^2
	\bigg] - 2\tilde{K}(\tilde{\phi'}+ \norm{\phi'}_\infty) \bigg(
	\EE\bigg[
	(\bv^T {\bz})^2
	\bigg]  -\EE\bigg[
	(\bv^T {\bz})^2\mathds{1}(A)
	\bigg]
	\bigg).
	\eeq
	For the second term on the RHS of (\ref{eq_prop_verify_2}), we have
	\beq
	&\EE\bigg[
	\bv^T{\bz}{\bz}^T\bv
	\bigg]  -\EE\bigg[
	\bv^T{\bz}{\bz} ^T\bv\mathds{1}(A)
	\bigg]\\ \leq&  \EE\bigg[
	(\bv^T {\bz})^2\mathds{1}(|\bz^T(\btheta - \btheta^*)| > \frac{T}{2R}\norm{\btheta - \btheta^*}_2)
	\bigg]\\
	\leq & c\sigma^2\PP\Big(|\bz^T(\btheta - \btheta^*)| > \frac{T}{2R}\norm{\btheta - \btheta^*}_2\Big)\\
	\leq& c\sigma^2\exp\bigg(
	-\frac{c'T^2}{\sigma^2R^2}
	\bigg),
	\eeq
	where the second inequality follows from Cauchy-Schwarz inequality and that $\bZ$ is sub-Gaussian with parameter $\sigma^2$, and the last inequality follows from the sub-Gaussianity of $\bZ$ and $\norm{\btheta - \btheta^*}_2 \leq 2R$.
	Therefore, under the condition (\ref{eq_prop_verify_0}), we can ensure that
	\[
	2\tilde{K}(\tilde{\phi'}+ \norm{\phi'}_\infty)  \Big(
	\EE\big[
	(\bv ^T\bz)^2
	\big]  - \EE\big[
	(\bv ^T\bz)^2 \mathds{1}(A)
	\big]
	\Big) \leq \tilde{K}\tilde{\phi'}\lambda_{\min}(\bSigma_{\bZ}).
	\]
	This implies that
	\beq
	\bv^T\nabla^2R_\delta (\btheta)\bv\geq \tilde{K}\tilde{\phi'}\lambda_{\min}(\bSigma_{\bZ}).
	\eeq
	The proof is complete.
\end{proof}

\section{Example of different convergence rate from \texorpdfstring{\cite{ban2019}}{Lg}}\label{sup_ban}
In this section we provide a concrete example in which the soft margin condition is satisfied with $\alpha = 1$ while the smoothness parameter $\beta$ defined in Definition \ref{def_holder_new} can be arbitrarily large. Note that in this example the values of parameters are chosen for better exposition without loss of generality.

Consider $W = (X,Z) \in \RR^{d+1}$ where $X\sim N(0,\sigma^2)$, $X\independent Z\sim N_d(0,\sigma^2\II_d)$ and $W \independent \epsilon \sim N(0,(2\sigma)^2)$. 
Recall the binary response model takes the form
$
Y = \sign{\btheta^TW + \epsilon},
$
where we assume $\theta_1 = 1$ and $\norm{\btheta}_2 = 2$.
Direct calculation gives that
\beq
\eta(w):= \PP(Y =1|W = w) = \PP(\btheta^T W + \epsilon \geq 0 | W = w) = F_\epsilon (\btheta^Tw),
\eeq
where $F_{\epsilon}(\cdot)$ is the c.d.f of $\epsilon$. Thus
\beq
\PP_W(|\eta(W) - 0.5|\leq t) &= \PP_W(F_\epsilon^{-1}(0.5 - t) \leq \btheta^TW \leq F_\epsilon^{-1}(0.5 + t) )\\
&= F_{\btheta^T W}(F_\epsilon^{-1}(0.5 + t) ) - F_{\btheta^T W}(F_\epsilon^{-1}(0.5 - t) )\\
&=2t,
\eeq
where the last step follows from $\btheta^T\bW \stackrel{d}{=}\epsilon\sim N(0,4\sigma^2)$.
This implies that the soft margin condition is satisfied with $\alpha = 1$ and  not satisfied for any $\alpha > 1$. Therefore the fastest convergence rate for the estimator in \cite{ban2019} is $\big(
\frac{s\log(d/s)\log n}{n}
\big)^{1/3}$. However in this case, given a fixed smoothness $\beta > 1$ (without loss of generality suppose $\beta$ is an integer), we have
\beq
|f^{(\beta)}(x|y = 1,\bz)| &= \bigg | \frac{\sum^{\beta}_{k = 0} \binom{\beta}{k}F_\epsilon^{(k)}(x + \btheta_{-1}^T\bz)f^{(\beta - k )}_{X}(x) }{\PP(Y=1|\bz)}
\bigg |\\
&\leq
\frac{\sum^{\beta}_{k = 0} \binom{\beta}{k}\norm{H^{(k)}}_{\infty}\norm{H^{(\beta+1-k)}}_{\infty}
}{\sigma^{\beta+1}\PP(Y=1|\bz)},
\eeq
where $H^{(k)}$ is the $k$th derivative of the c.d.f. of standard normal distribution. By the boundedness of standard normal c.d.f, p.d.f and derivatives, we know that for any $L > 0$, with a large enough $\sigma$ we can always ensure that $|f^{(\beta)}(x|y = 1,\bz)|\leq \frac{L}{2\sigma\PP(Y=1|\bz)}$. With some algebra we can show that in this case

\beq
\bigg|
\int\bv^T\bz [f^{(\beta-1)}(\triangle + \btheta^T\bz|y = 1,\bz) - f^{(\beta-1)}(\btheta^T\bz|y = 1,\bz)] f(\bz|y = 1)d\bz
\bigg| \leq L\norm{\bv}_2|\triangle|.
\eeq
This holds similar when $Y = -1$, and thus Assumption \ref{assum_density} holds with smoothness parameter $\beta$. Since $\beta$ can be chosen arbitrarily large, Theorem \ref{theorem_main2} suggests that for this example, the $\ell_2$ rate of our estimator is  $\big(\frac{s \log d}{n}\big)^{\beta/(2\beta+1)}$ which can be much faster than \cite{ban2019}.

\section{Additional results on real data analysis}
\begin{figure}
    \centering
    \includegraphics[width = 15cm,height = 15cm]{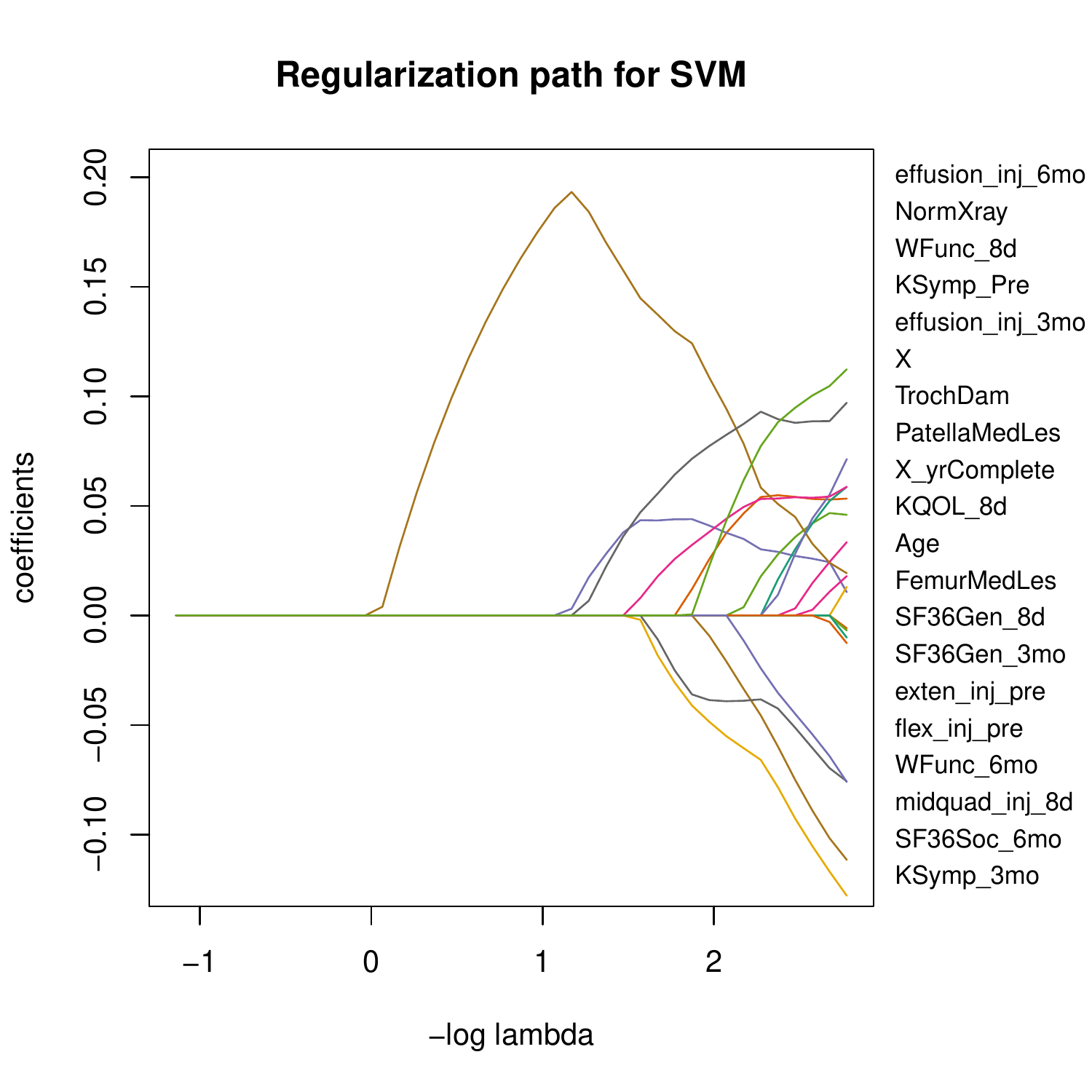}
    \caption{The regularization path of penalized SVM. }\label{fig_SVM}
\end{figure}
In this section we discuss additional results from the real data application. 
Figure \ref{fig_SVM} depicts the regularization path of $\ell_1$ penalized SVM. For better exposition we list the active variable names based on the order of values of coefficients at the last stage of the path. Compared to the proposed method and logistic regression, SVM yields very different output. This may be due to the fact that the estimand of SVM is intrinsically different. Nevertheless, similar to the proposed method and logistic regression, variables such as \textit{KSymp\_3mo} and \textit{ SF36Soc\_6mo} are all active with negative coefficients and the major measurement variable $X$ becomes active early with a positive sign.
\clearpage
\setlength{\bibsep}{0.85pt}
{
\bibliographystyle{ims}
\bibliography{ref}
}

\end{document}